\definecolor{LightCyan}{rgb}{0.88,1,1}
\newcommand{\floor}[1]{\left\lfloor #1 \right\rfloor}
\newcommand{\stkout}[1]{\ifmmode\text{\sout{\ensuremath{#1}}}\else\sout{#1}\fi}
\theoremstyle{plain}
\numberwithin{equation}{section}
\newtheorem{theorem}{Theorem}[section]
\newtheorem{proposition}[theorem]{Proposition}
\newtheorem{lemma}[theorem]{Lemma}
\newtheorem{corollary}[theorem]{Corollary}
\newtheorem{set-up}[theorem]{Set-up}
\theoremstyle{definition}
\newtheorem{remark}[theorem]{Remark}
\newtheorem{example}[theorem]{Example}
\newtheorem{definition}[theorem]{Definition}
\newtheorem{question}[theorem]{Question}
\newcommand{\PP}{\mathbb P}
\newcommand*{\QEDB}{\hfill\ensuremath{\square}}
\tikzstyle{decision} = [diamond, draw, , 
\tikzstyle{block} = [rectangle, draw, , 
\tikzstyle{block1} = [rectangle, draw, , 
\tikzstyle{line} = [draw, -latex']
\tikzstyle{cloud} = [draw, ellipse,, node distance=3cm,
\begin{document}

\title[Construction of varieties of low codimension with application to moduli spaces]{Construction of varieties of low codimension with applications to moduli spaces of varieties of general type}

\author[P. Bangere]{Purnaprajna Bangere}
\address{Department of Mathematics, University of Kansas, Lawrence, USA}
\email{purna@ku.edu}
\author[F.J. Gallego]{Francisco Javier Gallego}
\address{Departamento de \'Algebra, Geometr\'ia y Topolog\'ia and Instituto de Matem\'atica Interdisciplinar,
Universidad Complutense de Madrid, Madrid, Spain}
\email{gallego@mat.ucm.es}
\author[J. Mukherjee]{Jayan Mukherjee}
\address{Department of Mathematics, University of California Riverside, Riverside, USA}
\email{jayanm@ucr.edu}
\author[D. Raychaudhury]{Debaditya Raychaudhury}
\address{Department of Mathematics, University of Toronto, Toronto, Canada}
\email{debaditya.raychaudhury@utoronto.ca}
\subjclass[2020]{14B10, 14D06, 14D15, 14D20, 14M10}
\keywords{Deformations of polarized varieties, deformations of morphisms, multiple structures, complete intersections, Fano varieties, Calabi-Yau varieties, varieties of general type, moduli of varieties of general type.}

\maketitle

\begin{center}
    \textit{Dedicated to our dear friend Miguel Gonz\'alez on his 60th birthday}
\end{center}

\begin{abstract}
\textcolor{black}{In this article we develop} a new way of systematically constructing \textcolor{black}{infinitely many} families of smooth subvarieties $X$ of \textcolor{black}{any given} dimension $m$, $m \geq 3$, and any \textcolor{black}{given} codimension in \textcolor{black}{{$\mathbb P^N$}}, embedded by complete subcanonical linear series, and,
in particular, in the range of Hartshorne's conjecture. We accomplish this by showing the existence of everywhere non--reduced schemes called ropes, embedded in {{$\mathbb P^N$}}, and by smoothing them. In the range $3 \leq m < {{N/2}}$, we construct smooth subvarieties, embedded by complete subcanonical linear series, that are not complete intersections. 
We also go beyond a question of Enriques on constructing simple canonical surfaces in projective spaces, and construct simple canonical varieties in all dimensions. \textcolor{black}{The canonical map of infinitely many} of these simple canonical
varieties \textcolor{black}{is}  finite birational \textcolor{black}{but} not an embedding. Finally, we show the existence of components of moduli spaces of varieties of general type (in all dimensions $m$, $m \geq 3$) that are analogues of the moduli space of curves of genus $g > 2$ with respect to the behavior of the canonical map \textcolor{black}{and its deformations}. In many cases, the general elements of these components are canonically embedded and their codimension \textcolor{black}{is} in the range of Hartshorne's conjecture.
\end{abstract}

\normalsize
\section{Introduction}

In this article, we study the deformations of morphisms $\varphi$  to projective spaces that factor through a finite cover $\pi$ of degree $n$ of a complete intersection \textcolor{black}{subvariety}. As a result of this, we find a new, systematic method to construct infinitely many smooth subvarieties of \emph{any} codimension, embedded \textcolor{black}{in any projective space $\mathbb{P}^N$ by complete linear series,} including infinitely many smooth, non--complete intersection subvarieties and infinitely many simple canonical varieties \textcolor{black}{in \textcolor{black}{any dimension $m$, $m \geq 3$.}} We also find, for varieties of general type of \textcolor{black}{any dimension} $m \geq 3$, irreducible 
moduli components  with a closed locus where the degree of the canonical map jumps up. This includes, for any $m \geq 3$, components whose  general points correspond to varieties whose canonical map is either birational or an embedding.
As we explain in more detail below, the \textcolor{black}{sub}varieties we construct and, in particular, the smooth canonical varieties corresponding to general points of the above mentioned moduli components, degenerate to certain everywhere non--reduced schemes of multiplicity $n$, called \emph{ropes}, as it happens in the moduli of curves.

\smallskip

To carry out this program, we find out the conditions under which the deformation of $\varphi$ can be deformed to an embedding or, more generally, to a morphism of lower degree, along a one-parameter family.
In order for this to happen, 
 we show
 first the existence  of ropes  of the right multiplicity and codimension embedded in
  \textcolor{black}{$\mathbb P^N$}, and prove that they can be smoothed. In particular, 
 the smooth subvarieties constructed by  our method are smoothings of these ropes. \textcolor{black}{\textcolor{black}{Therefore} we produce one--parameter families whose general members are smooth subvarieties and whose special member is not even a local complete intersection, except \textcolor{black}{if it is a rope} of multiplicity $2$.} As already said, 
our study of the deformations of $\varphi$ goes beyond the case of deformations to embeddings. Indeed, we prove general results (see Proposition \ref{nonexistence} and Theorems \ref{birational}, \ref{theorem.embedding}, \ref{2:1})
{which show} that the deformations of $\varphi$ have very diverse behavior and the degree of the morphisms so obtained varies greatly (from degree $1$ to degree $n/2$ and $n$). In particular, we obtain criteria for $\varphi$ to be deformed to birational morphisms which are not necessarily embeddings. Although we extensively use our general results  when $\pi$ is a simple cyclic cover, a cover acted by $\mathbb Z_{n/2} \times \mathbb Z_2$ or a dihedral cover \textcolor{black}{of a complete intersection subvariety}, this article shows the way to follow
when $\pi$ is \textcolor{black}{an arbitrary} Galois cover \textcolor{black}{or, even, certain finite covers  of complete intersections, which are not necessarily Galois (see Set-up~\ref{setup1})}. We detail now the several applications, given in the paper, of our method and techniques.  

\smallskip

\noindent{\bf Construction of smooth, small codimension subvarieties.} Among the smooth subvarieties we construct,  there are  infinitely many families of  subvarieties of small codimension $r$, embedded by complete linear series in $\mathbb{P}^N$. They include subvarieties in the range
$r <\frac{1}{3}N$, that is, in the range of Hartshorne's conjecture on complete intersections. Even if  some of these subvarieties turn out to be complete intersections (see Corollary~\ref{kpr} and 
Proposition~\ref{prop.ci}), there are some others for which this is not clear. Among them, those at or near the boundary of the range of Hartshorne's conjecture could be especially interesting. In the following two tables 
we give a very small sample of the smooth subvarieties  we construct in the range $r=(1/3)N-1$. 
In the tables, $X'$ is a smooth projective $s$--subcanonical (see Definition \ref{defsubcan}), $m$--dimensional subvariety, obtained by deforming $\varphi$,  where $\varphi$ factors through a finite cover of degree $n$ of a complete intersection $Y$ of multidegree $\underline{d}$. 
More precisely, in Table \ref{t01}, the subvarieties are obtained by deforming simple cyclic covers branched along a smooth divisor in $|\mathcal O_Y(2n)|$. In Table \hyperref[t02]{2}, the subvarieties are obtained by deforming simple  
\textcolor{black}{dihedral} covers with $k=2$ (see Section \ref{5} for notation).

 \small
 
 \begin{multicols}{2}
  \begin{center}\phantomsection\label{t01}
 \begin{tabular}{c|c|c|c|c|c}
 \hline
 $m$ & $n$ &  $N$ & $s$ & $\underline{d}$ & {deg}($X'$)\\
 \hline\hline
 $9$ & $4$ & $12$ & $5$ & $(2,4,6)$ & $192$ \\
 \hline
 $11$ & $5$ &  $15$ & $12$ & $(2,4,6,8)$ & $1920$\\
 \hline
 $15$ & $7$ &  $21$ & $32$ & $(2,4,6,8,10,12)$ & $322560$\\
 \hline
\end{tabular}
\captionof{table}{{Deforming \textcolor{black}{simple cyclic} covers to obtain low codimension subvarieties}}
\end{center}\columnbreak
\begin{center}\phantomsection\label{t02}
 \begin{tabular}{c|c|c|c|c|c}
 \hline
 $m$ & $n$  & $N$ & $s$ & $\underline{d}$ & {deg}($X'$)\\ 
 \hline\hline
 $13$ & $6$ &
 $18$ & $5$ & $(2,2,4,4,6)$ & $2304$ \\
 \hline
 $17$ & $8$ &  $24$ & $15$ & $(2,2,4,4,6,6,8)$ & $147456$ \\
 \hline
 $21$ & $10$ & $30$ & $29$ & $(2,2,4,4,6,6,8,8,10)$ & $14745600$ \\
 \hline
 \end{tabular}
\captionof{table}{{Deforming \textcolor{black}{simple dihedral} covers to obtain low codimension subvarieties}}
\end{center}
\end{multicols}
\normalsize
\noindent 
The general deformation of the morphisms $\varphi$  of Table \ref{t01}  is an embedding. By Proposition~\ref{prop.ci}, the image of this general deformation of $\varphi$ is a complete intersection. However, we do not know whether the same is true for  some special deformations (see Question~\ref{question.ci}). In the case of Table \hyperref[t02]{2}, we do not know whether the images of the deformations of $\varphi$ are complete intersections subvarieties or not (see Question~\ref{question.ci.diherdral}). For comprehensive results on the construction of small codimension, smooth subvarieties, see Sections~\ref{4} and \ref{5} and Subsection~\ref{embznz2}.

\smallskip

\noindent{\bf  Construction of smooth, non--complete intersection subvarieties.}
We also construct, in a systematic way, smooth subvarieties, embedded in $\mathbb P^N$ by complete linear series, which are not complete intersections. Among the  subvarieties of the lowest dimension and degree that we obtain are the threefolds 
whose invariants are detailed in the following table, where $X'$ is $s$-subcanonical and is obtained by deforming $\varphi$, which factors through a simple cyclic \textcolor{black}{cover} of degree $n$ branched along a smooth divisor in $|\mathcal O_Y(2n)|$ (see Example~\ref{emb.more} for further details):

\vskip .4cm

\begin{center}\phantomsection\label{t03}
 \begin{tabular}{c|c|c|c|c} 
 \hline
  $n$ &  $N$ & $s$ & $\underline{d}$ & 
 deg$(\varphi'(X'))$\\
 \hline\hline
  $2$ & \color{black} $7$ & $6$ &  $(3,3,3,3)$ &  $162$\\
 \hline
 $2$ &  $7$ & $7$ &  $(3,3,3,4)$ & $216$\\
 \hline
   $3$ &  $8$ & $15$ & $(4,4,4,4,4)$ & $3072$\\
 \hline
   $3$ &  $8$ & $16$ & $(4,4,4,4,5)$ & $3840$\\
    \hline
 \end{tabular}
 \captionof{table}{Deforming \textcolor{black}{simple cyclic} covers to obtain non--complete intersections}
 \end{center}
 
 \vskip .4cm

\noindent More generally, for any  $3 \leq m < \frac{N}{2}$, 
we construct non--complete intersection, $m$--dimensional smooth subvarieties of $\mathbb P^N$ of infinitely many different degrees (see Theorems~\ref{real.main0.5} and \ref{main0.5}). \textcolor{black}{A standard way of constructing smooth non--complete intersection subvarieties in this range is to realize them as degeneracy loci of vector bundle homomorphisms. Our method is quite different from that, \textcolor{black}{since our} smooth subvarieties \textcolor{black}{come} as general members of families \textcolor{black}{which smooth ropes}.}

\smallskip

\noindent{\bf  Construction of simple canonical varieties.}
In 1943 Enriques  raised the question of the existence of simple canonical surfaces  in projective spaces, i.e., surfaces of general type for which the canonical map is birational. 
In the present article we systematically construct simple canonical varieties of general type in all dimensions $m \geq 3$. More generally, 
we deform the morphisms $\varphi$ to morphisms $\varphi'$, birational onto their image, from smooth, $s$--subcanonical varieties $X'$. When $s=1$, $X'$ are simple canonical varieties. In most of those cases, the morphisms $\varphi'$ are embeddings and the varieties
$\varphi'(X')$ are canonically embedded, smooth subvarieties; however, we produce infinitely many examples of smooth varieties equipped with \textcolor{black}{finite} birational canonical maps, in fact, morphisms, that are not embeddings (see  Theorem~\ref{prop.birational} and Example \ref{enr}).

\smallskip

\noindent{\bf Moduli of varieties of general type.} \textcolor{black}{Deep results on} existence \textcolor{black}{and boundedness} of moduli spaces of varieties of general type  have been accomplished in recent years (see \cite{Kol}, \cite{Kol2}, \cite{Kov}, \cite{HMX}). The moduli space of curves $\mathcal{M}_g$ and its compactification has been a chief inspiration \textcolor{black}{for them and} \textcolor{black}{considering} one--parameter \textcolor{black}{families} of varieties of general type has \textcolor{black}{also} been crucial. \textcolor{black}{For each $m \geq 3$} it is natural to \textcolor{black}{ask if one} can systematically construct nontrivial examples of moduli spaces of \textcolor{black}{$m$--dimensional} varieties of general type $X$ that have components analogous to \textcolor{black}{the} moduli \textcolor{black}{space} of curves,  with respect to the behavior of canonical maps and their deformations. To be precise, \textcolor{black}{one would like to construct moduli components with locally closed loci that resemble   the hyperelliptic locus of $\mathcal{M}_g$ ($g >2$) in the following sense.} \textcolor{black}{The hyperelliptic locus parametrizes curves whose} canonical maps  \textcolor{black}{are} finite \textcolor{black}{morphisms $\varphi$} of degree $2$ onto its image \textcolor{black}{such that} {a} general deformation \textcolor{black}{of $\varphi$} is \textcolor{black}{an embedding}. More subtly, {a} general deformation of \textcolor{black}{$\varphi$} gives rise to \textcolor{black}{a} one--parameter  \textcolor{black}{family of subvarieties} \textcolor{black}{\textcolor{black}{whose central member is a rope of multiplicity $2$ (a canonical ribbon)}, while
\textcolor{black}{its} general \textcolor{black}{member} is a smooth canonically embedded curve.} 

\smallskip

\textcolor{black}{In Section \ref{section.moduli}, we produce 
moduli components of higher dimensional varieties of general type that capture the features, described above, of the moduli space of curves and its hyperelliptic locus. Indeed,} 
we show how to systematically construct nontrivial examples of moduli \textcolor{black}{spaces} of varieties of general type (for \textcolor{black}{instance,} not \textcolor{black}{being products})  \textcolor{black}{of dimension $m$, $m \geq 3$,} with an \textcolor{black}{irreducible} component having a \textcolor{black}{locally} closed \textcolor{black}{locus} that
\textcolor{black}{parametrizes varieties whose canonical map is a} finite \textcolor{black}{morphism $\varphi$
} of degree $\textcolor{black}{n, n}\geq 2$, onto its image \textcolor{black}{such that} its general deformation \textcolor{black}{is an embedding}. Moreover, like in the case of curves, the image of \textcolor{black}{the central fiber of} \textcolor{black}{a general} first order deformation \textcolor{black}{$\varphi$} is an embedded rope of multiplicity $n$. \textcolor{black}{Such a rope is the one--parameter degeneration of smooth, canonically embedded varieties that correspond to general points of the moduli component.
We find these canonically embedded varieties for any codimension $r$ and, in particular, for any $r$ in the range of the Hartshorne’s conjecture.}
\textcolor{black}{For each $m \geq 3$,} we also construct \textcolor{black}{two distinct kinds of moduli} components which differ from the moduli space of curves. Firstly, we exhibit moduli components (see Corollary \ref{def of can morphisms birational}) such that {a general {deformation}  of a \textcolor{black}{canonical} morphism $\varphi$ corresponding to a point in the special \textcolor{black}{locus} {is}  not {an} embedding but {a} birational morphism}, and the image of \textcolor{black}{the central fiber of} \textcolor{black}{a general} first order deformation of $\varphi$ is not an embedded rope, \textcolor{black}{but \textcolor{black}{a possibly locally non--Cohen--Macaulay,  everywhere non--reduced scheme.}} Secondly, we also construct moduli components (see Corollary \ref{def of can morphisms halved}) where the degree of the canonical map $\varphi$ drops from $n$ to $n/2$, as $\varphi$ deforms from \textcolor{black}{a locally closed locus} to the general stratum. For surfaces, the existence of moduli components with \textcolor{black}{locally} closed loci where the degree of the canonical \textcolor{black}{map} jumps up  \textcolor{black}{was} previously known (see \cite{Cat81}, \cite{Cat87}, \cite{CS02},  \cite{CPT00}, \cite{AK90}, \cite{GGP2}, \cite{GGP1} and \cite{moduli}).
\textcolor{black}{The  special loci seen in Section~\ref{section.moduli} are but a particular case of a more general phenomenon. In fact, our results imply (see Remarks~\ref{remark.moduli}, \ref{remark.moduli2}, \ref{remark.moduli3}, \ref{remark.moduli4}) the existence of infinitely many irreducible components possessing loci which  \textcolor{black}{correspond} to  \textcolor{black}{the jumping} up of the degree of subcanonical maps.  {These loci for higher dimensional varieties motivate  interesting questions concerning the moduli space of curves itself, namely, if the moduli of curves has analogous locally closed loci, in that case, in relation to the possible change of degree of morphisms induced by theta-characteristics or by other subcanonical divisors.}}
 
 \smallskip

\noindent{\bf Hilbert scheme components with ropes in their boundary.} 
As already mentioned, the construction and smoothing of ropes  is key in our method to produce subvarieties in projective space (see Theorem~\ref{theorem.embedding}). 
A priori, it is not clear why embedded ropes of codimensions in the range of  Hartshorne's conjecture and beyond should deform to smooth subvarieties. When these ropes  have multiplicity $n \geq 3$,  they are \textcolor{black}{not even local complete intersections.} We systematically construct, in any codimension (this includes the range of Hartshorne's conjecture), non--complete intersection, embedded ropes, lying in the boundary of an irreducible component 
of the Hilbert scheme parametrizing  smooth complete intersection 
subvarieties (see Proposition \ref{prop.ci}). We do this  when $\varphi$ satisfies Theorem~\ref{theorem.embedding.a} (a) and factors through a simple cyclic cover or, more generally, a suitable 
composition  of simple cyclic covers.
However, it  is not clear that  any one-parameter deformation of these ropes is a complete intersection, except if the codimension is $r \leq 2$, {where we use 
\textcolor{black}{the results in} \cite{KPR} to prove Corollary ~\ref{kpr} (\textcolor{black}{indeed, the question of complete intersections is better known for codimension $2$ subvarieties of projective space, see e.g. \cite{Hor}, \cite{KPR}, and  has even been studied in more general varieties, see the works \cite{Ott}, \cite{AC}, \cite{Mad}, \cite{CM}, \cite{KR071}, \cite{KR072} \cite{R09}, \cite{RT19}, to name just a few).}} Furthermore, when $\varphi$ factors through other Galois covers, such as, for instance, dihedral covers, it is also unclear that the general deformation of such ropes is a complete intersection subvariety. The situation is further complicated by the existence of examples (see 
Remark ~\ref{f3}) where we show that $\varphi$ is unobstructed 
in its deformation space, but the rope that appears when we deform $\varphi$
is obstructed, in some of the cases because it lies in 
the intersection of two components of the Hilbert scheme.
 \textcolor{black}{In addition,} in the range $r < N/2$, we also show the existence of  ropes that can be deformed to smooth subvarieties but do not lie in the boundary of any irreducible component of the Hilbert 
scheme  parameterizing smooth complete intersection subvarieties (see Theorems \ref{real.main0.5}, \ref{main0.5}). 

\smallskip

\noindent\textbf{Organization.} We now provide the structure of this article. In Section \ref{secprelim}, we recall the definition of ropes and several results, including the results on the deformation theory of finite morphisms developed by the first two authors and Miguel Gonz\'alez. In Section \ref{3} we study the deformations of finite covers of complete intersection subvarieties. In Section \ref{secabel} we give the details of the construction of  abelian and dihedral covers. Section \ref{secnonci} is devoted to finding the necessary and sufficient conditions to ensure that a general deformation of a finite cover is not a complete intersection. We apply general results of Sections \ref{3}, \ref{secabel} and \ref{secnonci} to deform  simple cyclic covers to  embeddings of smooth subvarieties of any dimension (including small codimensional subvarieties),  embeddings of non--complete intersections, and birational maps and, more generally, to non--embeddings, in Sections \ref{4}, \ref{section.cyclic.covers.nci}, and \ref{section.cyclic.covers.non.embeddings} respectively. We study the deformations of $\mathbb{Z}_{n/2}\times \mathbb{Z}_2$ and simple dihedral covers of complete intersections in Sections \ref{secz} and \ref{5}. In Section~\ref{section.moduli}, we apply the results of the previous sections to prove the existence of irreducible \textcolor{black}{moduli} components  \textcolor{black}{that behave like the moduli space of curves}. Finally, we devote Section~\ref{6} to open questions.

\smallskip

\noindent\textbf{Acknowledgements.} We sincerely thank Enrique Arrondo, \textcolor{black}{S\'andor Kov\'acs}, Mohan Kumar, and Madhav Nori for some motivational discussions and for generously sharing their time listening to our results. 
In particular, we thank Madhav Nori for insightful \textcolor{black}{conversations} 
on deformations of iterated simple cyclic covers from a point of 
view different from the one taken here; Mohan Kumar, on 
codimension two subvarieties and Hartshorne's conjecture; Enrique Arrondo, on low codimension subvarieties and different aspects of Hartshorne's conjecture; and S\'andor Kov\'acs, on issues related to moduli spaces. 
We also thank Miguel Gonz\'alez for helpful and motivating conversations. The second author was partially supported by Spanish Government grant  PID2021-124440NB-I00 and by Santander-UCM grant PR44/21. The third author was supported by the National Science Foundation, Grant No. DMS-1929284 while in residence at ICERM in Providence, RI, as part of the ICERM Bridge program. The fourth author was partially supported by a Simons postdoctoral fellowship from the Fields Institute for Research in Mathematical Sciences.

\smallskip
We have no conflict of interests to disclose.

\section{Preliminaries}\label{secprelim}
We work over the field of complex numbers. 
One of the central tools for deforming a finite morphism to a morphism of smaller degree is to construct a suitable multiple structure, called rope, on the image of the morphism.
\begin{definition}\phantomsection\label{defropes}
Let $Y$ be a reduced connected scheme and let $\mathcal{E}$ be a vector bundle of rank $m-1$ on $Y$. A {\it rope of multiplicity $m$ on $Y$ with conormal bundle $\mathcal{E}$} is a scheme $Y'$ with $Y'_{\textrm{red}}=Y$ such that
\begin{itemize}
    \item[(1)] $\mathcal{I}_{Y/Y'}^2=0$,
    \item[(2)] $\mathcal{I}_{Y/Y'}=\mathcal{E}$ as $\mathcal{O}_Y$ modules.
\end{itemize}
If $\mathcal{E}$ is a line bundle then $Y'$ is called a {\it ribbon} on $Y$.
\end{definition}

{Let $\varphi:X\to\mathbb{P}^N$ be a morphism} from a smooth, projective variety \textcolor{black}{$X$,} which is finite onto a smooth variety $Y\hookrightarrow\mathbb{P}^N$. \textcolor{black}{Recall that} {the space} $H^0(\mathcal{N}_{\varphi})$, \textcolor{black}{where
$\mathcal{N}_{\varphi}$ is the normal sheaf of $\varphi$,} parametrizes the 
infinitesimal deformations of $\varphi$
\textcolor{black}{(see, e.g., \cite[Section 3.4.2]{Ser})}. Suppose $\mathcal{E}$ is the trace zero module of the induced morphism $\pi:X\to Y$. It is shown in \cite[Proposition 2.1]{Go}, that the space  $H^0(\mathcal{N}_{Y/\mathbb{P}^N}\otimes\mathcal{E})$ parametrizes the pairs  $(\widetilde{Y},\widetilde{i})$ where $\widetilde{Y}$ is a rope on $Y$ with conormal bundle $\mathcal{E}$ and $\widetilde{i}:\widetilde{Y}\to\mathbb{P}^N$ is a morphism that extends $i$. The relation between these two cohomology groups is given by the following proposition.
\begin{proposition}\label{propngonzalez}
 (\cite[Proposition 3.7]{Go}) Let $X$ be a smooth variety and let $\varphi: X \to \mathbb{P}^N$ be a morphism that factors as $\varphi = i \circ \pi$, where $\pi$ is a finite cover of a smooth variety $Y$
and $i:Y\hookrightarrow\mathbb{P}^N$ is an embedding. Let $\mathscr{E}$ be the trace zero module of $\pi$ and let $\mathscr{I}$ be the ideal sheaf of $i(Y)$. There exists a homomorphism
\begin{equation*}
    H^0(\mathscr{N}_{\varphi})\xrightarrow{\psi}\textrm{Hom}(\pi^*(\mathscr{I}/\mathscr{I}^2),\mathscr{O}_X)
\end{equation*}
that appears when taking cohomology on the commutative diagram \cite[(3.3.2)]{Go}. Since
\begin{equation*}
    \textrm{Hom}(\pi^*(\mathscr{I}/\mathscr{I}^2),\mathscr{O}_X)=H^0(\mathscr{N}_{Y/\mathbb{P}^N})\oplus H^0(\mathscr{N}_{Y/\mathbb{P}^N}\otimes\mathscr{E}),
\end{equation*}
the homomorphism $\psi$ has two components;
\begin{equation*}
    H^0(\mathscr{N}_{\varphi})\xrightarrow{\psi_1}H^0(\mathscr{N}_{Y/\mathbb{P}^N})\textrm{ and }H^0(\mathscr{N}_{\varphi})\xrightarrow{\psi_2}H^0(\mathscr{N}_{Y/\mathbb{P}^N}\otimes\mathscr{E}).
\end{equation*}
\end{proposition}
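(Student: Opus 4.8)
The plan is to realize $\psi$ as the map on global sections of a natural surjection of sheaves $\mathscr{N}_\varphi\twoheadrightarrow\pi^*\mathscr{N}_{Y/\mathbb{P}^N}$ on $X$, and then to identify $\textrm{Hom}(\pi^*(\mathscr{I}/\mathscr{I}^2),\mathscr{O}_X)$ with $H^0(X,\pi^*\mathscr{N}_{Y/\mathbb{P}^N})$ and split it using the decomposition $\pi_*\mathscr{O}_X=\mathscr{O}_Y\oplus\mathscr{E}$. First I would check that $\mathscr{N}_\varphi$ is the cokernel of $d\varphi\colon T_X\to\varphi^*T_{\mathbb{P}^N}$: since $\pi$ is a Galois cover of the smooth variety $Y$ it is generically \'etale, and since $i$ is a closed embedding, $d\varphi$ is an isomorphism over the \'etale locus of $\pi$, so its kernel is a torsion subsheaf of the locally free sheaf $T_X$ and hence vanishes; thus $0\to T_X\to\varphi^*T_{\mathbb{P}^N}\to\mathscr{N}_\varphi\to 0$ is exact.

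Next I would use the factorization $\varphi=i\circ\pi$ to write $d\varphi$ as the composite $T_X\xrightarrow{d\pi}\pi^*T_Y\to\varphi^*T_{\mathbb{P}^N}$, the second arrow being $\pi^*$ of $di\colon T_Y\hookrightarrow i^*T_{\mathbb{P}^N}$. Since $i$ is a regular embedding of the smooth variety $Y$ in $\mathbb{P}^N$, dualizing the conormal sequence of $Y$ gives the locally split exact sequence $0\to T_Y\to i^*T_{\mathbb{P}^N}\to\mathscr{N}_{Y/\mathbb{P}^N}\to 0$ (with $\mathscr{N}_{Y/\mathbb{P}^N}$ the dual of the locally free $\mathscr{I}/\mathscr{I}^2$), which therefore stays exact after applying $\pi^*$; so $0\to\pi^*T_Y\to\varphi^*T_{\mathbb{P}^N}\to\pi^*\mathscr{N}_{Y/\mathbb{P}^N}\to 0$ is exact. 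Combining this with the defining sequence $0\to T_X\to\pi^*T_Y\to\mathscr{N}_\pi\to 0$ of the normal sheaf of $\pi$ (as in the proof of Proposition~\ref{pushnpi}) and using the elementary fact that a chain of subsheaf inclusions $A\subseteq B\subseteq C$ yields $0\to B/A\to C/A\to C/B\to 0$, I would obtain the short exact sequence
\[
0\longrightarrow\mathscr{N}_\pi\longrightarrow\mathscr{N}_\varphi\longrightarrow\pi^*\mathscr{N}_{Y/\mathbb{P}^N}\longrightarrow 0,
\]
which is the sheaf-level content of diagram $(3.3.2)$ of \cite{Go}; then $\psi$ is defined as the map induced on global sections by the surjection $\mathscr{N}_\varphi\twoheadrightarrow\pi^*\mathscr{N}_{Y/\mathbb{P}^N}$.

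To identify the target, I would use that $\mathscr{I}/\mathscr{I}^2$ is locally free to get $\mathscr{H}om_{\mathscr{O}_X}(\pi^*(\mathscr{I}/\mathscr{I}^2),\mathscr{O}_X)=\pi^*\mathscr{H}om_{\mathscr{O}_Y}(\mathscr{I}/\mathscr{I}^2,\mathscr{O}_Y)=\pi^*\mathscr{N}_{Y/\mathbb{P}^N}$, so that $\textrm{Hom}(\pi^*(\mathscr{I}/\mathscr{I}^2),\mathscr{O}_X)=H^0(X,\pi^*\mathscr{N}_{Y/\mathbb{P}^N})$. Since $\pi$ is finite, $H^0(X,-)=H^0(Y,\pi_*-)$, and the projection formula with $\pi_*\mathscr{O}_X=\mathscr{O}_Y\oplus\mathscr{E}$ gives $\pi_*\pi^*\mathscr{N}_{Y/\mathbb{P}^N}=\mathscr{N}_{Y/\mathbb{P}^N}\oplus(\mathscr{N}_{Y/\mathbb{P}^N}\otimes\mathscr{E})$, hence $\textrm{Hom}(\pi^*(\mathscr{I}/\mathscr{I}^2),\mathscr{O}_X)=H^0(\mathscr{N}_{Y/\mathbb{P}^N})\oplus H^0(\mathscr{N}_{Y/\mathbb{P}^N}\otimes\mathscr{E})$; composing $\psi$ with the two projections yields $\psi_1$ and $\psi_2$.

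Most of this is bookkeeping with standard exact sequences, so I do not expect a serious obstacle. The two points that need genuine care are the injectivity of $d\varphi$ — so that $\mathscr{N}_\varphi$ really is the cokernel used to parametrize deformations of $\varphi$ — which rests on $\pi$ being generically \'etale and $i$ an embedding, and the exactness of the $\pi$-pullback of the normal bundle sequence of $Y$. Checking that the $\psi$ constructed here coincides with the one extracted from diagram $(3.3.2)$ of \cite{Go}, and (for later use) identifying $\psi_1$, $\psi_2$ with the maps to the deformation space of $Y\subseteq\mathbb{P}^N$ and to the space of ropes on $Y$ via \cite{Go}, Proposition 2.1, is then just a matter of tracing definitions.
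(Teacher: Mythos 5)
The paper gives no proof of this proposition---it is quoted directly from \cite{Go}, Proposition 3.7---and your reconstruction is correct and follows exactly the construction that the cited diagram (3.3.2) encodes: you splice $0\to T_X\to\varphi^*T_{\mathbb{P}^N}\to\mathscr{N}_\varphi\to 0$ with the pulled-back normal bundle sequence of $Y$ to obtain $0\to\mathscr{N}_\pi\to\mathscr{N}_\varphi\to\pi^*\mathscr{N}_{Y/\mathbb{P}^N}\to 0$ (the sequence~\eqref{gonex} the paper itself quotes from \cite{Go}, Lemma 3.3), define $\psi$ on global sections, and decompose $\textrm{Hom}(\pi^*(\mathscr{I}/\mathscr{I}^2),\mathscr{O}_X)=H^0(\pi^*\mathscr{N}_{Y/\mathbb{P}^N})$ via local freeness of $\mathscr{I}/\mathscr{I}^2$, the projection formula and the trace splitting $\pi_*\mathscr{O}_X=\mathscr{O}_Y\oplus\mathscr{E}$. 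The two points you flag are indeed the ones needing care and are handled correctly (note only that generic \'etaleness of $\pi$, hence injectivity of $d\varphi$ and $d\pi$, is automatic in characteristic $0$ for any finite cover of an integral smooth variety---the Galois hypothesis is not needed---and exactness of the pulled-back normal sequence follows since it is locally split).
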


Throughout this article, we will use the following result from \cite{GGP2}. Note the misprint in \cite{GGP2}, where the word "generically" in the last line of the statement is missing.

\begin{theorem} (\cite[Theorem 1.4]{GGP2})\label{main}
Let $X$ be a smooth projective variety and let $\varphi:X\to \mathbb{P}^N$ be a morphism that factors through an embedding $Y\hookrightarrow\mathbb{P}^N$ with $Y$ smooth. Let $\pi:X\to Y$ be the induced morphism which we assume to be finite of degree $n\geq 2$. Let $\widetilde{\varphi}:\widetilde{X}\to\mathbb{P}^N_{\Delta}$ $(\Delta=\textrm{Spec}\left(\frac{\mathbb{C}[\epsilon]}{\epsilon^2}\right))$ be a locally trivial first order infinitesimal deformation of $\varphi$ and let $\nu\in H^0(\mathscr{N}_{\varphi})$ be the class of             
 $\widetilde{\varphi}$.  If
\begin{itemize}
    \item[(a)] the homomorphism $\psi_2(\nu)$ has rank $k> \frac{n}{2}-1$, and 
    \item[(b)] there exists an algebraic formally semiuniversal deformation of ${\varphi}$ and ${\varphi}$ is unobstructed,
\end{itemize}
then there exists a flat family of morphisms, $\Phi: \mathscr{X}\to\mathbb{P}^N_T$ over $T$, where
$T$ is a smooth irreducible algebraic curve with a distinguished point $0$, such that
\begin{enumerate}
    \item[(1)] $\mathscr{X}_t$ is a smooth irreducible projective variety,
    \item[(2)] the restriction of $\Phi$ to the first order infinitesimal neighborhood of $0$ is $\widetilde{\varphi}$, and 
    \item[(3)] for $t\neq 0$, $\Phi_t$ is finite and generically one-to-one onto its image in $\mathbb{P}^N_t$.
\end{enumerate}
\end{theorem}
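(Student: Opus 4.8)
\noindent\emph{Proof proposal.} The strategy is to integrate the first order deformation $\widetilde\varphi$ to a genuine one-parameter family using hypothesis (b), read off (1) and (2) from general deformation theory, and deduce (3) from a degree count in which the rope attached to $\psi_2(\nu)$ forces the degree of $\Phi_t$ to drop to $1$; hypothesis (a), in the sharp form $k>\frac n2-1$, is exactly what makes the count close. By (b), $\varphi$ has a formally semiuniversal deformation which is algebraic and, by unobstructedness, has smooth base; concretely, there is a scheme $S$ of finite type, smooth at the point $0$ representing $\varphi$, with $T_0S\cong H^0(\mathscr N_\varphi)$ and a family of morphisms to $\mathbb P^N$ over $S$ restricting to $\varphi$ at $0$. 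Since $S$ is smooth at $0$, I would choose a smooth irreducible algebraic curve $0\in T\subseteq S$ whose tangent vector at $0$ is $\nu$ and pull the family back along $T\hookrightarrow S$ to obtain a flat family $\Phi\colon\mathscr X\to\mathbb P^N_T$ with $\Phi_0=\varphi$ and $\mathscr X_0=X$. As $T$ is a smooth curve, its first order neighbourhood of $0$ is $\Delta$, and semiuniversality identifies $\Phi\times_T\Delta$ with the first order deformation of class $\nu$, i.e.\ with $\widetilde\varphi$; this is (2). Since $\mathscr X_0=X$ is smooth, $\mathscr X\to T$ is smooth along $\mathscr X_0$, so after shrinking $T$ every $\mathscr X_t$ is smooth; and since $X$ is connected, Stein factorization of the proper morphism $\mathscr X\to T$ (equivalently, upper semicontinuity of $h^0(\mathscr O_{\mathscr X_t})$) shows, after a further shrinking, that every $\mathscr X_t$ is connected, hence smooth and irreducible, which is (1).

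\emph{The degree count.} Next I would shrink $T$ so that $\Phi$ is finite, which is legitimate because $\Phi_0=\varphi$ is finite (hence quasi-finite), quasi-finiteness is open, and a proper quasi-finite morphism is finite. Let $\mathscr Y\subseteq\mathbb P^N_T$ be the reduced image of $\Phi$. Since $\mathscr X$ is integral and dominates $T$, $\mathscr Y$ is integral and flat over $T$ with fibres of pure dimension $m=\dim X$; properness of $\mathscr X\to T$ and finiteness of $\Phi$ force $\mathrm{supp}(\mathscr Y_0)=Y$, so $\mathscr Y_0$ is a thickening of $Y$ with $[\mathscr Y_0]=\mu\,[Y]$ as cycles for some $\mu\ge1$, whence $\deg\mathscr Y_t=\mu\deg Y$ for all $t$ by flatness. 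Likewise $\deg\mathscr X_t=n\deg Y$ for all $t$ (the $\Phi^*\mathcal O_{\mathbb P^N}(1)$-degree, constant along the flat family, and for $t=0$ equal to $\deg\varphi=n$ times $\deg Y$); and for general $t\neq0$ one has $\mathscr Y_t=\Phi_t(\mathscr X_t)$ with its reduced structure, so $n\deg Y=\deg\mathscr X_t=\delta\,\deg\mathscr Y_t=\delta\mu\deg Y$ where $\delta=\deg(\Phi_t)$, i.e.\ $n=\delta\mu$. To bound $\mu$ below, observe that because $\mathscr X$ is reduced, $\Phi$ — hence $\widetilde\varphi=\Phi\times_T\Delta$ — factors through $\mathscr Y$, so the scheme-theoretic image of $\widetilde\varphi$ lies in $\mathscr Y\times_T\Delta$, and therefore, after restriction to the closed point, inside $\mathscr Y_0$. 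By Proposition~\ref{propngonzalez} and the results of \cite{Go} (Propositions $2.1$ and $3.7$) recalled above, the scheme-theoretic image of $\widetilde\varphi$ is a rope on $Y$ whose conormal bundle is the image of the homomorphism underlying $\psi_2(\nu)$, hence a rope of multiplicity $k+1$; consequently $\mu\ge k+1$. Combining $n=\delta\mu$ with hypothesis (a) yields $\delta=n/\mu\le n/(k+1)<2$, so $\delta=1$.

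\emph{Conclusion and the main obstacle.} For $t\neq0$ the morphism $\Phi_t$ is thus finite of degree $1$ onto $\mathscr Y_t$, i.e.\ birational onto its image. To promote this to the set-theoretic injectivity asserted in (3), I would study the closed subset $Z\subseteq\mathscr X\times_{\mathbb P^N_T}\mathscr X$ consisting of pairs of distinct points with the same image under $\Phi$, and show that its image in $T$ omits a punctured neighbourhood of $0$ — again exploiting that to first order the image of $\Phi$ is the rope $\widetilde Y$ on which $\widetilde i$ is a morphism extending the embedding $i$ — after which one last shrinking of $T$ gives (3). The step I expect to be the real obstacle is the lower bound $\mu\ge k+1$: scheme-theoretic image does not commute with base change, so one must argue with care that the infinitesimal thickening of $Y$ carried by $\mathscr Y_0$ is at least as large as the one prescribed by $\psi_2(\nu)$. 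This is exactly where hypothesis (a) — and the precise threshold $k>\frac n2-1$, which is what turns $\delta\le n/(k+1)$ into $\delta=1$ — enters the argument; upgrading ``degree one'' to ``one-to-one'' is a secondary, more technical point.
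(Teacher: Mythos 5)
The paper itself does not prove this theorem: it is quoted verbatim from \cite{GGP2} (Theorem 1.4), so your attempt has to be measured against the proof in that source. Your overall route is the same as the original one: use hypothesis (b) to cut a smooth curve $T$ through the point of the algebraic semiuniversal base with tangent direction $\nu$, pull back the family to get $\Phi$, obtain (1) and (2) by standard arguments, and then run a degree count on the image family. The bookkeeping in that count is essentially right: the scheme-theoretic image $\mathscr Y$ of $\Phi$ is integral, hence flat over the smooth curve $T$; $\mathrm{supp}(\mathscr Y_0)=Y$; $\deg\mathscr Y_t$ is constant; and $n\deg Y=\delta\,\mu\,\deg Y$ for general $t\neq 0$, so that $\mu\geq k+1$ together with $k>\frac n2-1$ forces $\delta=1$.

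The genuine gap is exactly the step you yourself flag, namely $\mu\geq k+1$, and it is not a minor technicality but the heart of the proof in \cite{GGP2}. The results you invoke (\cite{Go}, Propositions 2.1 and 3.7, i.e.\ Proposition~\ref{propngonzalez} above) only say that $H^0(\mathcal N_{Y/\mathbb P^N}\otimes\mathcal E)$ parametrizes pairs (rope on $Y$ with conormal bundle $\mathcal E$, morphism extending $i$) and define the map $\psi_2$; they say nothing about the scheme-theoretic image of $\widetilde\varphi$. The assertion that the central fibre of $\mathrm{Im}\,\widetilde\varphi$ carries a multiplicity-$(k+1)$ structure along $Y$ determined by $\psi_2(\nu)$ requires the local analysis of the ideal of the image of the deformed morphism carried out in \cite{GGP2} (and its precursor on smoothing ropes); note also that when $0<k<\mathrm{rk}\,\mathcal E$ the image of $\psi_2(\nu)$ is a rank-$k$ subsheaf that need not be a subbundle, so ``rope with conormal bundle $\mathrm{im}\,\psi_2(\nu)$'' is not even well formed as you state it --- the correct output of that analysis is the degree bound $\deg(\mathrm{Im}\,\widetilde\varphi)_0\geq(k+1)\deg Y$, which is what feeds your inequality $\mu\geq k+1$. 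Since you assert rather than prove this, the argument is incomplete at its decisive point. Separately, conclusion (3) as stated (``finite one-to-one onto its image'') is not delivered: your count only yields that $\Phi_t$ is finite of degree $1$, i.e.\ birational onto its image, and a finite birational morphism need not be injective (normalization of a non-normal image); your proposed analysis of the locus of pairs of points with equal image is only a sketch and does not use the hypotheses in any identifiable way. In the present paper the theorem is only ever applied through birationality (see the proof of Theorem~\ref{birational}(1)), so this second gap is less consequential, but as a proof of statement (3) as written it is missing.
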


The first and the second author, along with Gonz\'alez, in fact gave a criterion under which a finite morphism deforms to an embedding. 
\begin{theorem} (\cite[Theorem 1.5]{GGP1})\label{main1}
Under the assumption of Theorem ~\ref{main}, suppose moreover $\psi_2(\nu)$ is a surjective homomorphism. Then there exists a flat family of morphisms, $\Phi: \mathscr{X}\to\mathbb{P}^N_T$ over $T$, where
$T$ is a smooth irreducible algebraic curve with a distinguished point $0$, such that
\begin{enumerate}
    \item[(1)] $\mathscr{X}_t$ is a smooth irreducible projective variety,
    \item[(2)] the restriction of $\Phi$ to the first order infinitesimal neighborhood of $0$ is $\widetilde{\varphi}$, and 
    \item[(3)] for $t\neq 0$, $\Phi_t$ is a closed immersion into $\mathbb{P}^N_t$.
\end{enumerate}
\end{theorem}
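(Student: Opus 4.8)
The plan is to reduce Theorem~\ref{main1} to Theorem~\ref{main} by observing that surjectivity of $\psi_2(\nu)$ is the ``rank $=$ full'' limiting case of hypothesis (a), and then to upgrade conclusion (3) from ``finite one-to-one'' to ``closed immersion'' by a separate argument controlling the differential of $\Phi_t$. First I would invoke Theorem~\ref{main}: since $\psi_2(\nu)\colon H^0(\mathscr{N}_\varphi)\to H^0(\mathscr{N}_{Y/\mathbb{P}^N}\otimes\mathscr{E})$ being surjective certainly forces the homomorphism of sheaves it induces to have generic rank $n-1 > \tfrac{n}{2}-1$, hypothesis (a) holds, and hypothesis (b) is assumed verbatim. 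Hence we already obtain the smooth curve $T$, the distinguished point $0$, and the flat family $\Phi\colon\mathscr{X}\to\mathbb{P}^N_T$ satisfying (1), (2), and ``$\Phi_t$ finite one-to-one'' for $t\neq 0$; only the improvement to a closed immersion remains.

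The key step is therefore the following. The sheaf map $\pi^*(\mathscr{I}/\mathscr{I}^2)\to\mathscr{O}_X$ attached to $\nu$ via Proposition~\ref{propngonzalez}, restricted to its $\mathscr{E}$-component, is exactly the datum of the rope $\widetilde{Y}$ on $Y$ with conormal bundle $\mathscr{E}$ together with the extension $\widetilde{i}\colon\widetilde{Y}\to\mathbb{P}^N$ of $i$ (by \cite{Go}, Proposition~2.1, as recalled before Proposition~\ref{propngonzalez}); surjectivity of $\psi_2(\nu)$ translates into the statement that $\widetilde{Y}$ is everywhere a genuine multiplicity-$n$ rope, i.e. the conormal inclusion $\mathscr{I}_{Y/\widetilde{Y}}=\mathscr{E}\hookrightarrow \pi_*\mathscr{O}_X/\mathscr{O}_Y$ is an isomorphism of $\mathscr{O}_Y$-modules. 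I would then show that the restriction $\widetilde{\varphi}$ of $\Phi$ to the first infinitesimal neighbourhood of $0$ is, scheme-theoretically, the embedding $\widetilde{i}\colon\widetilde{Y}\to\mathbb{P}^N_\Delta$; this is where conclusion (2) of Theorem~\ref{main} combines with the rope interpretation. Because $\widetilde{i}$ is a closed immersion of the rope $\widetilde{Y}$, the central fibre is ``embedded to first order'', and one propagates this: the locus in $T$ where $\Phi_t$ fails to be unramified (equivalently, where the relative cotangent map $\Phi_t^*\Omega_{\mathbb{P}^N}\to\Omega_{\mathscr{X}_t}$ is not surjective) is closed, and the first-order analysis at $0$ shows its tangent cone at $0$ is empty, forcing it to miss a punctured neighbourhood of $0$. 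Shrinking $T$, $\Phi_t$ is then finite, one-to-one, \emph{and} unramified for $t\neq 0$, hence a closed immersion. Conclusions (1) and (2) are inherited unchanged.

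The main obstacle I expect is the transition from the first-order (rope-theoretic) statement to an honest statement over the curve $T$: Theorem~\ref{main} only guarantees that $\Phi$ agrees with $\widetilde\varphi$ over $\Delta$, so one must argue that ``$\widetilde\varphi$ is an embedding of the first-order neighbourhood'' is an \emph{open} condition on fibres that therefore persists near $0$. Concretely, the ramification locus $R\subset\mathscr{X}$ of $\Phi$ relative to $T$ is closed; its image in $T$ is closed since $\Phi$ is proper; and one needs to check that $0$ is not in that image, which is precisely the content of the rope being a true multiplicity-$n$ structure embedded by $\widetilde i$ — this uses surjectivity of $\psi_2(\nu)$ in an essential way (for non-surjective $\psi_2(\nu)$ of large rank, as in Theorem~\ref{main}, one only gets generic injectivity of the differential, hence ``one-to-one'' but not ``immersion''). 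Handling the points of $X$ over the branch locus of $\pi$, where $\mathscr{E}$ and the rope structure degenerate, will require the normal-crossing/logarithmic description of $\pi_*\mathcal{N}_\pi$ from Proposition~\ref{pushnpi}, but I do not anticipate new difficulties there beyond bookkeeping.
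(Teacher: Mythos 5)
The paper itself does not prove Theorem~\ref{main1}: it is quoted verbatim from \cite{GGP1}, Theorem~1.5, so the only proof to compare with is the one in that reference. Your first step is unproblematic: a surjective $\psi_2(\nu)\in H^0(\mathscr{N}_{Y/\mathbb{P}^N}\otimes\mathscr{E})=\mathrm{Hom}(\mathscr{N}^{*}_{Y/\mathbb{P}^N},\mathscr{E})$ has rank $n-1>\tfrac n2-1$ everywhere, so hypothesis (a) of Theorem~\ref{main} holds and you get the curve $T$, the family $\Phi$, conclusions (1), (2), and ``$\Phi_t$ finite one-to-one'' for $t\neq 0$. The problem is entirely in your upgrade from one-to-one to closed immersion.

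Two things go wrong there. First, $\widetilde{\varphi}$ is \emph{not} ``scheme-theoretically the embedding $\widetilde i:\widetilde Y\to\mathbb{P}^N_\Delta$'': it is a morphism from a flat deformation of $X$ over $\Delta$, and on the reduced central fibre it restricts to the degree-$n$ map $\varphi$; what surjectivity of $\psi_2(\nu)$ buys (via \cite{Go}) is a statement about the \emph{image} — namely that the associated rope $\widetilde Y$ with conormal bundle $\mathscr{E}$ is embedded by $\widetilde i$ — not that the source is embedded to first order. Second, and decisively, your openness/``empty tangent cone'' argument cannot work as stated: the ramification locus of $\Phi$ relative to $\mathbb{P}^N_T$ contains the ramification divisor of $\varphi=\Phi_0$ inside $\mathscr{X}_0$, so its image in $T$ is a closed set containing $0$; closedness then tells you nothing, and the real question — whether that locus dominates $T$ or is (after shrinking) contained in the central fibre — is exactly the content of the theorem and cannot be settled by this soft bookkeeping from the first-order datum. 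The proof in \cite{GGP1} (and its predecessor \cite{GGP08}) closes this gap by a different, quantitative mechanism: one considers the flat limit of the images $\mathrm{Im}\,\Phi_t$, shows using the surjectivity of $\psi_2(\nu)$ that this limit contains, hence equals, the embedded rope $\widetilde Y$; since $\mathscr{O}_{\widetilde Y}$ is filtered with graded pieces $\mathscr{O}_Y$ and $\mathscr{E}\cong$ the trace-zero module of $\pi$, the rope has the same Hilbert polynomial as $\bigl(X,\varphi^*\mathscr{O}_{\mathbb{P}^N}(1)\bigr)$, so by flatness $P_{\mathrm{Im}\,\Phi_t}=P_{\mathscr{X}_t}$ for $t\neq 0$, and the surjection $\mathscr{O}_{\mathrm{Im}\,\Phi_t}\to (\Phi_t)_*\mathscr{O}_{\mathscr{X}_t}$ between sheaves with equal Hilbert polynomials is an isomorphism, i.e.\ $\Phi_t$ is a closed immersion. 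Without this Hilbert-polynomial/flat-limit input (or an equivalent computation showing the ramification cannot persist sideways), your argument proves no more than Theorem~\ref{main} already gives.
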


The authors of \cite{GGP1} in fact showed that under the assumptions above, (Im$\Phi)_0$ is an embedded rope $\widetilde{Y}$ corresponding to $\psi_2(\nu)$. It is clear from the above theorems that we need to produce homomorphisms of appropriate rank between the vector bundles $\mathcal{I}/\mathcal{I}^2$ and $\mathcal{E}$. In order to do that, we need a theorem of B\u{a}nic\u{a}. We need the following definition in order to state the theorem. 

\begin{definition}
Let $\nu:\mathcal{E}\to \mathcal{F}$ be a morphism  between vector bundles of rank $e$ and $f$ respectively, on an irreducible complex projective  variety $X$. For any positive integer $k\leq \textrm{min}\{e,f\}$, we define the {\it k-th degeneracy locus} $D_k(\nu)$ as the subscheme cut out by the minors of order $k+1$ of the matrix locally representing $\nu$.
\end{definition}

Now we state the result of B\u{a}nic\u{a}. We will use this result to deform a finite morphism to a birational morphism.
\begin{theorem}\label{banica}
(\cite[\S 4.1]{B}) Let $\mathcal{E}$ and $\mathcal{F}$ be vector bundles on a projective variety $X$ of rank $e$ and $f$ respectively. Assume $\mathcal{E}^*\otimes \mathcal{F}$ is globally generated. Then, for a general morphism $\nu:\mathcal{E}\to \mathcal{F}$, the subschemes $D_k(\nu)$ either are empty or have pure codimension $(e-k)(f-k)$ in $X$ and the singular locus $\textrm{Sing}(D_k(\nu))=D_{k-1}(\nu)$.
\end{theorem}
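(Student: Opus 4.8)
The plan is to realize $D_k(\nu)$ as the pullback, along the section attached to $\nu$, of the relative ``rank $\le k$'' locus inside the total space of $\mathcal{H}om(\mathcal{E},\mathcal{F})$, and then to combine the classical structure theory of generic determinantal varieties with a parametric transversality (generic smoothness) argument that is made available precisely by the global generation hypothesis. Set $\mathcal{H}:=\mathcal{H}om(\mathcal{E},\mathcal{F})\cong\mathcal{E}^*\otimes\mathcal{F}$, let $q\colon W\to X$ be the total space of the vector bundle $\mathcal{H}$, and put $V:=H^0(X,\mathcal{H})=\mathrm{Hom}(\mathcal{E},\mathcal{F})$, which is finite dimensional because $X$ is projective. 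A morphism $\nu\colon\mathcal{E}\to\mathcal{F}$ is the same datum as a section $s_\nu\colon X\to W$ of $q$, and, comparing defining equations, $D_k(\nu)=s_\nu^{-1}(W_{\le k})$ as closed subschemes of $X$, where $W_{\le k}\subseteq W$ (resp.\ $W_{=j}\subseteq W$) denotes the locus of homomorphisms of rank $\le k$ (resp.\ exactly $j$). Trivialising $\mathcal{E}$ and $\mathcal{F}$ over a Zariski open $U\subseteq X$ identifies $W_{\le k}|_U$ with $U\times M_{\le k}$, where $M_{\le k}$ is the generic determinantal variety of matrices of rank $\le k$; I will use the classical facts (see e.g.\ Arbarello--Cornalba--Griffiths--Harris, \emph{Geometry of Algebraic Curves I}, Ch.\ II) that $M_{\le k}$ is reduced, irreducible and Cohen--Macaulay of codimension $(e-k)(f-k)$, that each $M_{=j}$ is smooth, and that $\mathrm{Sing}(M_{\le k})=M_{\le k-1}$. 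I also assume, as in all applications in this paper, that $X$ is irreducible (and, where the singular locus statement is concerned, smooth; in general one works over the smooth locus of $X$).

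The first step is to see that global generation forces the evaluation morphism
\[
\mathrm{ev}\colon X\times V\longrightarrow W,\qquad (x,\nu)\longmapsto \nu(x)=s_\nu(x),
\]
to be smooth with irreducible (affine-linear) fibres: for fixed $x$, the induced linear map $V\to W_x=\mathrm{Hom}(\mathcal{E}_x,\mathcal{F}_x)$ is surjective with kernel of constant dimension, and this is exactly global generation of $\mathcal{H}$. Consequently the incidence scheme $\mathcal{W}_{\le k}:=\mathrm{ev}^{-1}(W_{\le k})$ is irreducible of codimension $(e-k)(f-k)$ in $X\times V$ and Cohen--Macaulay, since it is the preimage of the irreducible Cohen--Macaulay scheme $W_{\le k}$ under a smooth morphism. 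Projecting to $V$ and invoking generic flatness together with the theorem on dimension of fibres, there is a dense open $U'\subseteq V$ over which $\mathcal{W}_{\le k}\to V$ is flat with equidimensional fibres; hence for $\nu\in U'$ the scheme $D_k(\nu)$ is either empty (when $\mathcal{W}_{\le k}\to V$ is not dominant) or Cohen--Macaulay and pure of codimension $(e-k)(f-k)$ in $X$ — in particular with no embedded components.

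The second step refines this to the smoothness and singular-locus assertions by a parametric transversality argument. For each $j$ with $0\le j\le k$, $\mathrm{ev}^{-1}(W_{=j})$ is smooth, being the preimage of a smooth locally closed subvariety under the smooth morphism $\mathrm{ev}$; by generic smoothness the projection $\mathrm{ev}^{-1}(W_{=j})\to V$ is smooth over a dense open $U_j\subseteq V$, so for $\nu\in U_j$ the locus $s_\nu^{-1}(W_{=j})$ is smooth of codimension $(e-j)(f-j)$ in $X$, or empty. Taking $\nu$ in the dense open $U:=U'\cap\bigcap_{j=0}^{k}U_j$, we have $D_k(\nu)=\bigsqcup_{j=0}^{k}s_\nu^{-1}(W_{=j})$ set-theoretically, with $s_\nu^{-1}(W_{=k})$ open and dense in $D_k(\nu)$ and smooth of codimension $(e-k)(f-k)$, while $D_{k-1}(\nu)=\bigsqcup_{j<k}s_\nu^{-1}(W_{=j})$ is a union of smooth loci of strictly larger codimension. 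Near a point of $s_\nu^{-1}(W_{=j})$, transversality of $s_\nu$ to $W_{=j}$ combined with the local product description $W_{\le k}|_U\cong U\times M_{\le k}$ identifies the germ of the pair $(D_k(\nu),D_{k-1}(\nu))$ with $(\text{smooth})\times(M_{\le k},M_{\le k-1})$ at a rank-$j$ point; since $\mathrm{Sing}(M_{\le k})=M_{\le k-1}$, this yields $\mathrm{Sing}(D_k(\nu))=D_{k-1}(\nu)$.

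The step I expect to be the main obstacle is the bookkeeping that makes ``general $\nu$'' achieve \emph{all} of these properties at once on a \emph{single} dense open of $V$: the expected (hence, here, exact) pure codimension of the determinantal \emph{scheme} $D_k(\nu)$ with no embedded components, the smoothness of the open stratum, and the precise description of the singular locus along the deeper strata $W_{=j}$, $j<k$, where $W_{\le k}$ itself is singular. Handling this cleanly requires combining generic flatness, the relative Bertini/Sard transversality theorem, the a priori bound $\mathrm{codim}_X D_k(\nu)\le(e-k)(f-k)$ valid for every $\nu$, and the Cohen--Macaulayness and known singularities of $M_{\le k}$; each ingredient is classical, but their simultaneous use over one open subset of the parameter space $V$ is the delicate point, and it is exactly the global generation of $\mathcal{E}^*\otimes\mathcal{F}$ that supplies the transversality needed to run it.
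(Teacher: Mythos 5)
The paper does not prove this statement: it is quoted verbatim from B\u{a}nic\u{a} (\cite{B}, \S 4.1) and used as a black box, so there is no in-paper argument to compare yours against. Your proposal is the standard (and, in substance, B\u{a}nic\u{a}'s) proof: pull back the universal rank stratification of the total space of $\mathcal{H}om(\mathcal{E},\mathcal{F})$ along the section $s_\nu$, use global generation to make the evaluation map $X\times V\to W$ a smooth surjection, and then apply generic flatness/generic smoothness over the parameter space $V$ together with the classical structure of generic determinantal varieties; this is correct over $\mathbb{C}$ (generic smoothness needs characteristic $0$, which the paper assumes) and with $X$ smooth for the singular-locus assertion, a caveat you rightly flag since the applications here have $Y$ a smooth complete intersection. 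Two small points of hygiene: irreducibility of $\mathrm{ev}^{-1}(W_{\le k})$ is not a formal consequence of smoothness of $\mathrm{ev}$ alone but follows because its fibres over $W_{\le k}$ are affine spaces (translates of $\ker(V\to W_x)$), and the equality $\mathrm{Sing}(D_k(\nu))=D_{k-1}(\nu)$ should be read with $k<\min\{e,f\}$ (as in the paper's applications), the inclusion $D_{k-1}(\nu)\subseteq\mathrm{Sing}(D_k(\nu))$ coming, as you indicate, from transversality to the stratum $W_{=j}$ plus the local product description of $M_{\le k}$ near a rank-$j$ matrix via the Schur complement.
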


The double covers of complete intersections are studied in \cite{BG20} in  more detail. In case of double covers, there are two possibilities, namely the general deformation is either birational, or it is finite of degree $2$. However, for higher degree covers, the degree of \textcolor{black}{a} general deformation might drop, but it might not be birational onto its image. We will show that this case indeed appears. In order to prove that, the following proposition  \textcolor{black}{and Proposition~\ref{jayan} will be} crucial.

The following result is the consequence of \cite[Proposition 1.10]{Weh}.
\begin{proposition} \label{corweh}
Let $\pi:X\to Y$ be a finite, flat  morphism between smooth projective varieties with trace zero module $\mathscr{E}$ and let $\psi:Y\to Z$ be a non--degenerate morphism to a smooth variety $Z$. Let $\varphi=\psi\circ\pi$ be the composed morphism. If $H^0(\mathscr{N}_{\psi}\otimes\mathscr{E})=0$ then the natural map between the functors ${\bf Def}({\pi/Z})\to{\bf Def}_{\varphi}$ is smooth.
\end{proposition}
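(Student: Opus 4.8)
The plan is to derive the statement from \cite{Weh}, Proposition~$1.10$; the only real work is to translate the cohomological condition appearing there into the vanishing $H^0(\mathscr{N}_{\psi}\otimes\mathscr{E})=0$ by means of the trace--zero decomposition of $\pi_*\mathscr{O}_X$. First I would record the exact sequence of normal sheaves attached to the factorization $\varphi=\psi\circ\pi$. Since $\pi$ is finite and flat, both $\pi^*$ and $\pi_*$ are exact functors on (quasi--)coherent sheaves; since $\psi$ is non--degenerate, $T_Y\to\psi^*T_Z$ is injective with torsion--free cokernel $\mathscr{N}_{\psi}$; and since $\pi$ is generically étale (we are in characteristic $0$) and $T_X$ is locally free, $T_X\to\pi^*T_Y$ is injective as well. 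Feeding $T_X\to\pi^*T_Y\to\varphi^*T_Z$ through the right exactness of $\pi^*$ one then obtains the short exact sequence
\begin{equation*}
0\longrightarrow\mathscr{N}_{\pi}\longrightarrow\mathscr{N}_{\varphi}\longrightarrow\pi^*\mathscr{N}_{\psi}\longrightarrow 0 ,
\end{equation*}
which is the relative analogue (over a general smooth $Z$) of the decomposition of $\mathrm{Hom}(\pi^*(\mathscr{I}/\mathscr{I}^2),\mathscr{O}_X)$ used in Proposition~\ref{propngonzalez}.

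Next I would recall what \cite{Weh}, Proposition~$1.10$ gives in this situation: under its running hypotheses — precisely $\pi$ finite and flat between smooth projective varieties and $\psi$ non--degenerate — the natural transformation ${\bf Def}(\pi/Z)\to{\bf Def}_{\varphi}$ is smooth provided the pullback map $H^0(Y,\mathscr{N}_{\psi})\to H^0(X,\pi^*\mathscr{N}_{\psi})$ is surjective. Heuristically this condition says that every first order deformation of $\varphi$ is induced by a deformation of the $Z$--scheme $Y$, so that $\varphi$ keeps factoring through the deformed $Y$, and that the obstruction spaces match up compatibly; this is exactly the lifting property defining smoothness of a morphism of deformation functors. (Note that smoothness of such a morphism does not require either functor to be pro--representable, so no unobstructedness hypothesis on $\varphi$ is needed.)

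It then remains to verify this surjectivity. By exactness of $\pi_*$ and the projection formula, $\pi_*\pi^*\mathscr{N}_{\psi}=\mathscr{N}_{\psi}\otimes\pi_*\mathscr{O}_X$; and since $\mathscr{E}$ is the trace--zero module of $\pi$ we have $\pi_*\mathscr{O}_X=\mathscr{O}_Y\oplus\mathscr{E}$ as $\mathscr{O}_Y$--modules, the splitting being compatible with the unit $\mathscr{O}_Y\to\pi_*\mathscr{O}_X$ (in the abelian case this is \eqref{formula.O_X.splitting}, with $L_1\cong\mathscr{O}_Y$ and $\mathscr{E}=\bigoplus_{\chi\neq 1}L_{\chi}^{-1}$). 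Hence
\begin{equation*}
H^0(X,\pi^*\mathscr{N}_{\psi})=H^0(Y,\mathscr{N}_{\psi})\oplus H^0(Y,\mathscr{N}_{\psi}\otimes\mathscr{E}),
\end{equation*}
and under this identification the pullback map $H^0(Y,\mathscr{N}_{\psi})\to H^0(X,\pi^*\mathscr{N}_{\psi})$ is the inclusion of the first summand. Therefore it is surjective if and only if $H^0(Y,\mathscr{N}_{\psi}\otimes\mathscr{E})=0$, which is the hypothesis; applying \cite{Weh}, Proposition~$1.10$ then finishes the proof.

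The step I expect to be the main obstacle is pinning down the precise statement of \cite{Weh}, Proposition~$1.10$ and checking that it is fed by exactly the three--term sequence above — in particular, that the torsion sheaf $\mathscr{N}_{\pi}$ (supported on the ramification locus of $\pi$) and the possibly non--locally free $\mathscr{N}_{\psi}$ cause no trouble in the relative deformation theory, and that the object one splits off is the trace--zero module $\mathscr{E}$ itself rather than its dual. Once this bookkeeping is settled, the proof reduces to the short computation displayed above.
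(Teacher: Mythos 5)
Your proposal follows essentially the same route as the paper: apply \cite{Weh}, Proposition 1.10 to the triangle $X\to Y\to Z$, and use exactness of $\pi_*$ plus the projection formula to split $H^0(X,\pi^*\mathscr{N}_{\psi})=H^0(\mathscr{N}_{\psi})\oplus H^0(\mathscr{N}_{\psi}\otimes\mathscr{E})$, so that the pullback on $H^0$ is surjective exactly when $H^0(\mathscr{N}_{\psi}\otimes\mathscr{E})=0$. The one point where your paraphrase of the key input falls short is that Wehler's Proposition 1.10 imposes \emph{two} cohomological conditions, not one: besides surjectivity of $\beta_1:H^0(\mathscr{N}_{\psi})\to H^0(\pi^*\mathscr{N}_{\psi})$, one also needs injectivity of $\beta_2:H^1(\mathscr{N}_{\psi})\to H^1(\pi^*\mathscr{N}_{\psi})$ (this is the obstruction-theoretic half, which you gesture at heuristically but omit from the stated criterion). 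Fortunately this costs nothing: the same splitting $\pi_*\pi^*\mathscr{N}_{\psi}=\mathscr{N}_{\psi}\oplus(\mathscr{N}_{\psi}\otimes\mathscr{E})$ shows $\beta_2$ is a split injection, hence always injective, which is exactly the remark the paper makes. With that one sentence added, your argument coincides with the paper's proof; your preliminary discussion of the sequence $0\to\mathscr{N}_{\pi}\to\mathscr{N}_{\varphi}\to\pi^*\mathscr{N}_{\psi}\to 0$ is harmless but not needed for this proposition.
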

\noindent\textit{Proof.} We apply \cite[Proposition 1.10]{Weh} to the following commutative diagram.
\begin{equation*}\label{eq.comm.square.triangle}
		\xymatrix@C-20pt@R-12pt{
	X  \ar[rr]^{\pi} 
	\ar[dr]_{\varphi} 
	&& Y \ar[dl]^{\psi} \\
	& Z}
\end{equation*}
The maps $\beta_1$ and $\beta_2$ of \cite[Proposition 1.10]{Weh}
become the following: $$\beta_1: H^0(\mathcal{N}_{\psi}) \to H^0(\pi^*\mathcal{N}_{\psi})\textrm{, and }
\beta_2: H^1(\mathcal{N}_{\psi}) \to H^1(\pi^*\mathcal{N}_{\psi}).$$ The assertion follows since the map $\beta_2$ is always injective and $\beta_1$  is surjective if $H^0(\mathcal{N}_{\psi}\otimes\mathcal{E})=0$.\QEDB\par

\section{Deformation \textcolor{black}{theory} {of} \textcolor{black}{finite} covers of complete intersections}\label{3}
  
  \color{black}
  In this section we develop the deformation theory of 
  finite covers of complete intersection subvarieties of projective space. We will use our theory to construct in a systematic way other subvarieties, of dimension $m \geq 3$, of projective spaces of any dimension. These new subvarieties cover all possibilities in terms of dimension and codimension and infinitely many of them are non--complete intersections. 
  
  \smallskip
  
  We will require  our covers to satisfy two additional conditions (see (2.1) and (2.2) of Set-up~\ref{setup1} below), namely, the splitting of its trace zero module as a direct sum of line bundles and the vanishing of the first cohomology group of its normal sheaf. As Subsection~\ref{subsection.abelian.dihedral}
  makes clear, abelian and simple dihedral covers of complete intersections satisfy these two conditions, so there are plenty of covers to which our theory applies. This will be our set-up: 
  \color{black}

\begin{set-up}\label{setup1} Throughout  \textcolor{black}{the  remaining of the} article, unless otherwise stated, we will use the following set-up.
\begin{itemize}[leftmargin=0.4in]
    \item[(1)] Let $i:Y\hookrightarrow\mathbb{P}^N$ be a smooth, 
    \textcolor{black}{non--degenerate,} complete intersection 
    \textcolor{black}{subvariety} of multidegree $\underline{d}=(d_1,d_2,\cdots,d_r)$, with $r\geq 1$ and $2\leq d_1\leq d_2\leq \cdots \leq d_r$. \textcolor{black}{Let $m$ be the dimension of $Y$ and assume $m \geq 3$.} 
    Set
    \begin{equation*}
        \delta =\sum\limits_{i=1}^{r}d_i\textrm{ and } d=\prod\limits_{i=1}^r d_i.
    \end{equation*}
    \item[(2)] 
    \textcolor{black}{Let 
    $X$ be a smooth, irreducible variety,} let $\pi:X\to Y$ be a finite morphism \textcolor{black}{of degree $n$, $n \geq 2$ and let 
    $\varphi=i\circ\pi$}. 
    Let $\pi_*\mathcal{O}_X=\mathcal{O}_Y\oplus \mathcal{E}$, 
    \textcolor{black}{where $\mathcal E$ is the trace zero module of $\pi$.} 
    \textcolor{black}{Let} 
    $\mathcal N_\pi$ be the normal sheaf of $\pi$. 
    \color{black}{Assume furthermore that}
    \smallskip
    
    \begin{itemize}[leftmargin=0.6in]
        \item[(2.1)] $\mathcal E$ splits as direct sum of line bundles; 
        \item[(2.2)] $H^1(\mathcal N_\pi)=0$.
    \end{itemize}
    
    \smallskip
  Finally,  let 
  $\mathcal{O}_Y(k):=i^*\mathcal{O}_{\mathbb{P}^N}(k)$ for any 
  $k \in \mathbb Z$ and let 
    \color{black}
    \begin{equation*}
        \mathcal{E}=\bigoplus\limits_{i=1}^{n-1}\mathcal{O}_Y(-k_i),\textrm{ i.e., }\pi_*\mathcal{O}_X=\mathcal{O}_Y\oplus\left(\bigoplus\limits_{i=1}^{n-1}\mathcal{O}_Y(-k_i)\right),
    \end{equation*}
    \textcolor{black}{(see Remark~\ref{int} (1) below)}, 
    \textcolor{black}{where} $k_1\leq k_2\leq\cdots\leq k_{n-1}$ \textcolor{black}{are, necessarily positive, integers.} 
\end{itemize}
\end{set-up}

\begin{remark}\label{int}
We make a note of the following \textcolor{black}{well known} facts that we will use without explicitly stating.
\begin{itemize}
    \item[(1)] Since $Y$ is a complete intersection in $\mathbb{P}^N$, it follows from the Grothendieck–Lefschetz theorem (recall that 
    \textcolor{black}{$m\geq 3$}) that any line bundle on $Y$ is the restriction of a line bundle on $\mathbb{P}^N$. \textcolor{black}{In particular,} 
    $\textrm{Pic}(Y) = \mathbb{Z}$. 
    \item[(2)] Since $Y$ is a complete intersection of multidegree $\underline{d}=(d_1,\cdots, d_r)$, it follows that $$\mathcal{N}_{Y/\mathbb{P}^N}=\bigoplus\limits_{i=1}^r\mathcal{O}_Y(d_i).$$
    \item[(3)] For $a\in\mathbb{Z}$, $H^0(\mathcal{O}_Y(a))\neq 0$ $\iff$ $a\geq 0$ $\iff$ $\mathcal{O}_Y(a)$ is globally generated.
    \item[(4)] For $a\in\mathbb{Z}$, $H^i(\mathcal{O}_Y(a))= 0$, for all $1\leq i\leq \textcolor{black}{m-1}$. 
    \color{black}
    \item[(5)] Let $\mathcal I$ be the ideal sheaf of $i(Y)$ inside $\mathbb P^N$ and let  $H^i_*(\mathcal I)=\bigoplus_{\nu\in\mathbb{Z}} H^i(\mathcal I(\nu))$. For all $1 \leq i \leq m$,  
    $H^i_*(\mathcal I)=0$.
    \item[(6)] Because of (4), the variety $X$ is regular.
    \item[(7)] The morphism 
    \color{black}
    $\varphi$ is induced by the complete linear series $|\pi^*\mathcal{O}_Y(1)|$ if and only if $k_1\geq 2$.
\end{itemize}
\end{remark}

\color{black}
We now state and prove our main results on deformations of finite covers of complete intersections. First we need the following lemma.

\color{black}
\begin{lemma}\label{unobs}
In the situation of Set-up ~\ref{setup1}, $\varphi$ has an algebraic formally semiuniversal deformation space and it is unobstructed. Moreover, 
the map (see Proposition ~\ref{propngonzalez}) $$\psi_2: H^0(\mathcal{N}_{\varphi})\to H^0(\mathcal{N}_{Y/\mathbb{P}^N}\otimes\mathcal{E})$$ is surjective, where $\mathcal{E}$ is the trace zero module of $\pi$.
\end{lemma}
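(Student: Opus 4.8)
\noindent\textit{Proof proposal.} The plan is to deduce all the cohomological assertions from the single vanishing $H^1(\mathcal{N}_\pi)=0$, using the standard exact sequence of normal sheaves on $X$
\begin{equation*}
0\longrightarrow\mathcal{N}_\pi\longrightarrow\mathcal{N}_\varphi\longrightarrow\pi^*\mathcal{N}_{Y/\mathbb{P}^N}\longrightarrow 0,
\end{equation*}
obtained from the inclusions of subsheaves $T_X\subset\pi^*T_Y\subset\varphi^*T_{\mathbb{P}^N}$ (with $\mathcal{N}_\varphi=\varphi^*T_{\mathbb{P}^N}/T_X$, $\mathcal{N}_\pi=\pi^*T_Y/T_X$, and $\varphi^*T_{\mathbb{P}^N}/\pi^*T_Y=\pi^*\mathcal{N}_{Y/\mathbb{P}^N}$ by the tangent sequence of $i$). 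Since $\pi$, being a Galois cover in characteristic zero, is generically \'etale, $\varphi$ is generically an immersion onto $Y$; hence $d\varphi$ is generically injective, its kernel is a torsion subsheaf of the torsion free sheaf $T_X$ and so vanishes, and the two-term complex $[T_X\to\varphi^*T_{\mathbb{P}^N}]$ (in degrees $0,1$) is quasi-isomorphic to $\mathcal{N}_\varphi[-1]$. As $\mathbb{P}^N$ is rigid, it follows that the deformations of $\varphi$ have tangent space $H^0(\mathcal{N}_\varphi)$ and obstructions lying in $H^1(\mathcal{N}_\varphi)$, and that the homomorphism $\psi$ of Proposition~\ref{propngonzalez} is, by its construction in \cite{Go}, the map induced on global sections by the surjection $\mathcal{N}_\varphi\twoheadrightarrow\pi^*\mathcal{N}_{Y/\mathbb{P}^N}$ (via the identification $\textrm{Hom}(\pi^*(\mathscr{I}/\mathscr{I}^2),\mathscr{O}_X)=H^0(\pi^*\mathcal{N}_{Y/\mathbb{P}^N})$, valid because $i$ is a regular embedding). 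Thus $\textrm{coker}\,\psi$ injects into $H^1(\mathcal{N}_\pi)$, so $H^1(\mathcal{N}_\pi)=0$ forces $\psi$, and in particular its component $\psi_2$, to be surjective; and $H^1(\mathcal{N}_\varphi)=0$ will follow from $H^1(\mathcal{N}_\pi)=0$ together with $H^1(\pi^*\mathcal{N}_{Y/\mathbb{P}^N})=0$, giving the unobstructedness of $\varphi$.

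To prove $H^1(\mathcal{N}_\pi)=0$ I would push forward by the finite morphism $\pi$, so that $H^1(\mathcal{N}_\pi)=H^1(\pi_*\mathcal{N}_\pi)$, and invoke Proposition~\ref{pushnpi}:
\begin{equation*}
\pi_*\mathcal{N}_\pi=\bigoplus_{\chi\in G^*}\ \bigoplus_{(H,\psi)\in S_\chi}\mathcal{O}_{D_{H,\psi}}(D_{H,\psi})\otimes L_\chi^{-1}.
\end{equation*}
By the Grothendieck--Lefschetz theorem (Remark~\ref{int}(1)), each $D_{H,\psi}$ is cut out on $Y$ by a section of some $\mathcal{O}_Y(e_{H,\psi})$ with $e_{H,\psi}\ge 0$, and $L_\chi=\mathcal{O}_Y(\ell_\chi)$; twisting the structure sequence of $D_{H,\psi}$ by $L_\chi^{-1}$ then gives
\begin{equation*}
0\longrightarrow\mathcal{O}_Y(-\ell_\chi)\longrightarrow\mathcal{O}_Y(e_{H,\psi}-\ell_\chi)\longrightarrow\mathcal{O}_{D_{H,\psi}}(D_{H,\psi})\otimes L_\chi^{-1}\longrightarrow 0.
\end{equation*}
Because $\dim Y=N-r\ge 3$, Remark~\ref{int}(4) yields $H^1(\mathcal{O}_Y(a))=H^2(\mathcal{O}_Y(a))=0$ for every $a\in\mathbb{Z}$, so taking cohomology shows $H^1$ of each summand vanishes; hence $H^1(\mathcal{N}_\pi)=0$. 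This is precisely where the hypothesis $\dim Y\ge 3$ enters.

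For $H^1(\pi^*\mathcal{N}_{Y/\mathbb{P}^N})=0$, push forward once more: by the projection formula $\pi_*\pi^*\mathcal{N}_{Y/\mathbb{P}^N}=\mathcal{N}_{Y/\mathbb{P}^N}\otimes\pi_*\mathcal{O}_X$, which by Remark~\ref{int}(2) and Set-up~\ref{setup1}(2) is a direct sum of line bundles $\mathcal{O}_Y(d_j)$ and $\mathcal{O}_Y(d_j-k_i)$, each with vanishing $H^1$ on $Y$ (as $\dim Y\ge 2$). With the normal sheaf sequence above and $H^1(\mathcal{N}_\pi)=0$ this gives $H^1(\mathcal{N}_\varphi)=0$, so $\varphi$ is unobstructed and $\psi_2$ is surjective, as in the first paragraph. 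It remains to upgrade the formal semiuniversal deformation of $\varphi$ (which exists by Schlessinger's criterion, $H^0(\mathcal{N}_\varphi)$ being finite dimensional) to an algebraic one; for this I would realize $\varphi$ inside a Hilbert scheme, using that $\varphi^*\mathcal{O}_{\mathbb{P}^N}(m)$ is very ample for $m\gg 0$, so that together with $\varphi$ it embeds $X$ as a point of the Hilbert scheme of $\mathbb{P}^M\times\mathbb{P}^N$; a suitable locally closed subscheme, sliced transversally to the reembedding directions and the $\textrm{PGL}$-action, then maps smoothly onto a neighbourhood of $[\varphi]$ in $\mathbf{Def}_\varphi$ and supplies the required algebraic parameter space (alternatively one quotes the corresponding statement in \cite{GGP2}). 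I expect this last point --- passing from a formal to an algebraic semiuniversal deformation --- to be the only step that is not a routine cohomology computation; the substance of the lemma is the vanishing $H^1(\mathcal{N}_\pi)=0$, which drops out of Proposition~\ref{pushnpi}.
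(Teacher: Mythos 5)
Your cohomological argument is the paper's argument: you use the exact sequence $0\to\mathcal{N}_{\pi}\to\mathcal{N}_{\varphi}\to\pi^*\mathcal{N}_{Y/\mathbb{P}^N}\to 0$ from \cite{Go}, deduce $H^1(\mathcal{N}_{\pi})=H^1(\pi_*\mathcal{N}_{\pi})=0$ from Proposition~\ref{pushnpi} together with the vanishing of $H^1$ and $H^2$ of line bundles on the complete intersection $Y$ of dimension $\geq 3$, conclude surjectivity of $\psi$ (hence of $\psi_2$) from the connecting map into $H^1(\mathcal{N}_{\pi})$, and get $H^1(\mathcal{N}_{\varphi})=0$ (unobstructedness) by adding $H^1(\pi^*\mathcal{N}_{Y/\mathbb{P}^N})=0$ via the projection formula; your spelled-out computation of $H^1$ of each summand of $\pi_*\mathcal{N}_\pi$ via the structure sequence of $D_{H,\psi}$ is exactly what the paper leaves implicit. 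The one place you genuinely diverge is the existence of an \emph{algebraic} formally semiuniversal deformation of $\varphi$: the paper disposes of this in one line by observing $H^2(\mathcal{O}_X)=H^2(\mathcal{O}_Y)\oplus H^2(\mathcal{E})=0$ (again Remark~\ref{int}(4), using $\dim Y\geq 3$) and quoting \cite{BGG20}, Proposition 1.5, whereas you never check $H^2(\mathcal{O}_X)=0$ and instead sketch a Hilbert-scheme-of-graphs construction in $\mathbb{P}^M\times\mathbb{P}^N$. That strategy is standard and can be made rigorous (it is essentially how such criteria are proved, cf.\ \cite{Ser06}, 3.4.4), but as written it is the weakest link: the formal smoothness of the comparison map onto $\mathbf{Def}_\varphi$ and the slicing past the reembedding and $\mathrm{PGL}$ directions are asserted rather than proved, and they require their own cohomological input (e.g.\ Serre vanishing for the auxiliary very ample twist). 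In this setting the hypothesis of the quoted criterion, $H^2(\mathcal{O}_X)=0$, is immediate from the splitting of $\pi_*\mathcal{O}_X$, so the paper's route is both shorter and fully justified; if you keep your route, you should either carry out the smoothness verification or simply cite the criterion after checking $H^2(\mathcal{O}_X)=0$.
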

\noindent\textit{Proof.} Notice that $H^2(\mathcal{O}_X)=H^2(\mathcal{O}_Y)\oplus H^2(\mathcal{E})$. Consequently, by \cite[Proposition 1.7]{BGG20}, the morphism $\varphi$ has an algebraic formally semiuniversal deformation space since by Remark ~\ref{int}, (4) $H^2(\mathcal{O}_X)=0$. It follows from 
our assumption that $H^1(\mathcal{N}_{\pi})=0$. In particular, $\psi_2$ surjects due to the following exact sequence (see \cite[Lemma 3.3]{Go}) and projection formula:
\begin{equation}\label{gonex}
    0\to\mathcal{N}_{\pi}\to\mathcal{N}_{\varphi}\to\pi^*\mathcal{N}_{Y/\mathbb{P}^N}\to 0. 
\end{equation}
 Moreover, by Remark ~\ref{int}, (4), we get that $H^1(\pi^*\mathcal{N}_{Y/\mathbb{P}^N})=H^1(\mathcal{N}_{Y/\mathbb{P}^N})\oplus H^1(\mathcal{N}_{Y/\mathbb{P}^N}\otimes\mathcal{E})=0$. Then, from the short exact sequence ~\eqref{gonex}
we get $H^1(\mathcal{N}_{\varphi})=0$.\QEDB\par 
\vspace{5pt}

The following proposition gives a criterion under which the degree of a finite morphism remains unchanged for any of its deformation.

\begin{proposition}\label{nonexistence}
\color{black} Let   Set-up ~\ref{setup1} hold except maybe hypothesis (2.2).
\color{black} If $d_r<k_1$ then any deformation of $\varphi$ is finite of degree $n$ onto its image, which is a smooth, complete intersection subvariety of $\mathbb{P}^N$ of multidegree $\underline{d}=(d_1, d_2, \cdots , d_r)$.
\end{proposition}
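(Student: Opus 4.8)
The plan is to deform $\varphi$ via the chain $\mathbf{Def}(\pi/\mathbb P^N)\to\mathbf{Def}_\varphi$ and show that under the hypothesis $d_r<k_1$ the rope-theoretic obstruction vanishes, forcing every infinitesimal deformation of $\varphi$ to come from a deformation of $Y$ inside $\mathbb P^N$ together with a deformation of the cover $\pi$; then I would invoke the rigidity of complete intersections to conclude. Concretely, I would first compute the group $H^0(\mathcal N_{Y/\mathbb P^N}\otimes\mathcal E)$. By Remark~\ref{int}(2) and the hypothesis on $\mathcal E$ in Set-up~\ref{setup1},
\[
\mathcal N_{Y/\mathbb P^N}\otimes\mathcal E=\bigoplus_{i=1}^{r}\bigoplus_{j=1}^{n-1}\mathcal O_Y(d_i-k_j),
\]
and since $d_i\le d_r<k_1\le k_j$ for all $i,j$, every summand has strictly negative degree, so by Remark~\ref{int}(3) this cohomology group vanishes. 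Applying Proposition~\ref{corweh} with $\psi=i$ (which is non-degenerate, being a closed embedding) and $Z=\mathbb P^N$, the natural map of functors $\mathbf{Def}(\pi/\mathbb P^N)\to\mathbf{Def}_\varphi$ is smooth; in particular every deformation of $\varphi$ is induced, up to the ambiguity of deforming $\pi$, by a deformation of the pair $(Y\hookrightarrow\mathbb P^N)$.

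Next I would use that $Y$, being a smooth complete intersection of dimension $\ge 3$, is \emph{infinitesimally rigid as an embedded subvariety}: its Hilbert point is smooth and the only deformations of $i:Y\hookrightarrow\mathbb P^N$ are again smooth complete intersections of the same multidegree $\underline d$ (this is classical — the normal bundle $\bigoplus\mathcal O_Y(d_i)$ has vanishing $H^1$ by Remark~\ref{int}(4), so the component of the Hilbert scheme through $[Y]$ is smooth, and generically it parametrizes complete intersections; moreover a small deformation of $Y$ stays a complete intersection by Grothendieck–Lefschetz, $\operatorname{Pic}=\mathbb Z$). Combined with Remark~\ref{jayan}, any deformation $\mathscr Y\to T$ of $Y$ lifts to a deformation $\mathscr X\to\mathscr Y\to T$ of $\pi$; since $\pi$ is a finite abelian cover of degree $n$ and finiteness and degree are both open/closed conditions in flat families, the fibers $\mathscr X_t\to\mathscr Y_t$ remain finite of degree $n$, hence $\Phi_t=i_t\circ\pi_t:\mathscr X_t\to\mathbb P^N$ is finite of degree $n$ onto the complete intersection $\mathscr Y_t$.

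The step I expect to be the main obstacle is making precise the passage from the formal/infinitesimal smoothness of $\mathbf{Def}(\pi/\mathbb P^N)\to\mathbf{Def}_\varphi$ to a statement about an arbitrary (global, one-parameter) deformation of $\varphi$: Proposition~\ref{corweh} is a statement about deformation functors, so one must either (i) argue on the level of the formally semiuniversal deformation of $\varphi$ — which exists and is unobstructed by Lemma~\ref{unobs} — matching it with that of $\pi/\mathbb P^N$, and then algebraize, or (ii) show directly that for any flat family $\mathscr X\to T$ deforming $\varphi$, the induced family of images in $\mathbb P^N_T$ has complete-intersection fibers and that $\mathscr X_t\to(\operatorname{Im}\Phi)_t$ is forced to stay degree $n$. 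I would take route (i): since $H^1(\mathcal N_\varphi)=0$ (Lemma~\ref{unobs}) and $H^1(\mathcal N_\pi)=0$, both $\varphi$ and $\pi/\mathbb P^N$ are unobstructed with smooth semiuniversal bases, so the smooth surjection of functors upgrades to a smooth surjection of base spaces; every deformation of $\varphi$ is then pulled back from one of $\pi/\mathbb P^N$, and the conclusion follows from the rigidity of complete intersections as above. A minor point to check along the way is that the branch divisor hypothesis needed for Proposition~\ref{pushnpi}/Proposition~\ref{corweh} (normal crossing branch locus, $X$ normal — here smooth) is in force, which it is under Set-up~\ref{setup1}.
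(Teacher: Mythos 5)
Your proposal is correct and follows essentially the same route as the paper: the hypothesis $d_r<k_1$ gives $H^0(\mathcal{N}_{Y/\mathbb{P}^N}\otimes\mathcal{E})=0$, Proposition~\ref{corweh} (with $\psi=i$, $Z=\mathbb{P}^N$) then shows every deformation of $\varphi$ factors through a deformation of $\pi$ over a deformation of $Y$ in $\mathbb{P}^N$, and the conclusion follows from the fact that small deformations of complete intersections are complete intersections of the same multidegree (the paper cites \cite{Ser75} for this, where you argue it by hand). The extra care you take in upgrading the functor-level smoothness to arbitrary one-parameter families is a detail the paper leaves implicit, but it does not change the argument.
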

\noindent\textit{Proof.} By hypothesis, and Remark ~\ref{int}, (2), we obtain $H^0(\mathcal{N}_{Y/\mathbb{P}^N}\otimes\mathcal{E})=0$. The conclusion follows from Proposition ~\ref{corweh} and \cite{Ser75}.\QEDB\par 
\vspace{5pt}

Our methods of deformations of the finite morphism to an embedding is intimately related to the existence of some special rope structure on $Y$. We construct ropes of suitable ranks on $Y$ and we smooth them. This in turn deforms the finite morphism to an embedding or a birational map. One can explicitly construct smoothing family of split multiple structures, but our constructions are general.

Now we state our main \textcolor{black}{theorems} 
that give \textcolor{black}{criteria to determine when the general deformation of the covering morphism is birational (see Theorem~\ref{birational}) and when is not only birational but also an embedding  (see Theorem~\ref{theorem.embedding}).} 
Our resuls are a consequence of the existence of appropriate ropes.

\begin{theorem}\label{birational}
In the situation of Set-up ~\ref{setup1}, let
 $r>\floor{\frac{n}{2}}-1$ and $d_{r-\floor{\frac{n}{2}}+1}\geq k_{n-1}$. Then a general element of the algebraic formally semiuniversal deformation space of $\varphi$ is finite and birational onto its image.
\end{theorem}

\begin{remark}
Observe that $2r+2-n-N=r-(N-r)-(n-2)<r$ as $n\geq 2$ and $N-r\geq 3$.
\end{remark}

\noindent\textit{Proof of Theorem ~\ref{birational}.} 
Let $\mathcal{N}'=\mathcal{O}_Y(d_a)\oplus\mathcal{O}_Y(d_{a+1})\oplus\cdots\oplus\mathcal{O}_Y(d_r)$ where $a=r-\floor{\frac{n}{2}}+1$ and let $\mathcal{E}$ be the trace zero module of $\pi$. Notice that $\textrm{rank}(\mathcal{N}')=\floor{\frac{n}{2}}$. The condition $d_{r-\floor{\frac{n}{2}}+1}\geq k_{n-1}$ guarantees that $\mathcal{N}'\otimes\mathcal{E}$ is globally generated. It follows from Theorem ~\ref{banica} that, for a general homomorphism $\sigma:\mathcal{N}'^*\to\mathcal{E}$, $D_{\floor{\frac{n}{2}}-1}(\sigma)$ is a proper closed subvariety inside $Y$. Consequently, there exists a rank $\floor{\frac{n}{2}}$ homomorphism in $\textrm{Hom}(\mathcal{N}'^*,\mathcal{E})$, which in turn implies the existence of a rank $\floor{\frac{n}{2}}$ homomorphism in $\textrm{Hom}(\mathcal{N}_{Y/\mathbb{P}^N}^*,\mathcal{E})$. The assertion follows from Theorem ~\ref{main} and Lemma ~\ref{unobs}, since birationality is an open condition.
\QEDB\par

\begin{theorem}\label{theorem.embedding}
In the situation of Set-up ~\ref{setup1},
 let either of the following conditions (a) or (b) hold;
    \begin{itemize}
        \item[(a)] $r\geq n-1$ and there exists $1\leq l_1<l_2<\cdots<l_{n-1}$, such that $d_{l_j}=k_j \textrm{ for all } 1\leq j\leq n-1$;
         \item[(b)] $r>\frac{N+n-2}{2}$, and $d_{2r+2-n-N}\geq k_{n-1}$;
    \end{itemize}
    then there exist embedded ropes $\widetilde{Y}\hookrightarrow\mathbb{P}^N$ on $Y$ with conormal bundle $\mathcal{E}$, which are non--split if $\varphi$ is induced by a complete linear series. 
    
    \noindent
   For any embedded rope $\widetilde{Y}\hookrightarrow\mathbb{P}^N$ 
   on $Y$ with conormal bundle $\mathcal{E}$,  there exists a flat family $\Phi: \mathscr{X}\to\mathbb{P}^N_T$ over $T$, where $T$ is a smooth irreducible algebraic curve with a distinguished point $0$, such that
\begin{enumerate}
    \item[(I)] $\mathscr{X}_t$ is a smooth irreducible projective variety,
    \item[(II)] $\mathcal{X}_0=X$, $\Phi_0=\varphi$, and 
    \item[(III)] for $t\neq 0$, $\Phi_t$ is a closed immersion onto $\mathbb{P}^N_t$ and (Im$\Phi$)$_0$ is $\widetilde{Y}$.  
\end{enumerate} 
Consequently,  a general element of the algebraic formally semiuniversal deformation space of $\varphi$ is an embedding.
\end{theorem}

\noindent\textit{Proof.} We aim to show that if (a) or (b) holds, then there exists a surjective homomorphism in \linebreak $\textrm{Hom}(\mathcal{N}_{Y/\mathbb{P}^N}^*,\mathcal{E})$. Clearly that is the case if (a) holds. Now, assume (b) holds. As before, let $a=2r+2-n-N$ and set $\mathcal{N}''=\mathcal{O}_Y(d_a)\oplus\mathcal{O}_Y(d_{a+1})\oplus\cdots\oplus\mathcal{O}_Y(d_r)$. Notice that $\mathcal{N}''$ is a vector bundle of rank $(N-r)+(n-1)$ and the condition $d_{2r+2-n-N}\geq k_{n-1}$ guarantees that $\mathcal{N}''\otimes\mathcal{E}$ is globally generated. It follows from Theorem ~\ref{banica} that, for a general homomorphism $\sigma':\mathcal{N}''^*\to\mathcal{E}$, $D_{n-2}(\sigma')$ is empty. Indeed, otherwise it  has expected codimension $$((N-r)+(n-1)-(n-2))((n-1)-(n-2))=N-r+1>N-r$$ which is impossible. Thus, $\sigma'$ can be extended to a surjective homomorphism in $\textrm{Hom}(\mathcal{N}_{Y/\mathbb{P}^N}^*,\mathcal{E})$.\par 
Now, any surjective homomorphism 
of $\textrm{Hom}(\mathcal{N}_{Y/\mathbb{P}^N}^*,\mathcal{E})$ 
corresponds to
an embedded rope $\widetilde{Y}$ on $Y$, with conormal bundle $\mathcal{E}$.
 Assume $\varphi$ is induced by the complete linear series $|\pi^*\mathcal{O}_Y(1)|$ (equivalently, $k_1\geq 2$). Then any embedded rope $\widetilde{Y}\hookrightarrow\mathbb{P}^N$ on $Y$ with conormal bundle $\mathcal{E}$ is non--split. Indeed, it follows from the long exact sequence associated to the restricted Euler sequence twisted by $\mathcal{E}$:
\begin{equation*}
    0\to\mathcal{E}\to \mathcal{E}(1)^{\oplus N+1}\to T_{\mathbb{P}^N}\vert_Y\otimes\mathcal{E}\to 0
\end{equation*}
that $H^0(T_{\mathbb{P}^N}\vert_Y\otimes\mathcal{E})=H^1(T_{\mathbb{P}^N}\vert_Y\otimes\mathcal{E})=0$. Consequently, by the long exact sequence associated to the following exact sequence,
\begin{equation*}
    0\to T_Y\otimes\mathcal{E}\to T_{\mathbb{P}^N}\vert_Y\otimes\mathcal{E}\to \mathcal{N}_{Y/\mathbb{P}^N}\otimes\mathcal{E}\to 0,
\end{equation*}
it follows that $H^0(\mathcal{N}_{Y/\mathbb{P}^N}\otimes\mathcal{E})\cong H^1(T_Y\otimes\mathcal{E})$. Thus, the assertion follows from the fact that the class of a surjective homomorphism in $H^0(\mathcal{N}_{Y/\mathbb{P}^N}\otimes\mathcal{E})$ is nonzero.\par 
 The rest of the assertions of the statement of the theorem
 are consequences of 
 Theorem~\ref{main1}, Lemma ~\ref{unobs}, \cite[Theorem 2.2]{GGP1}, and the fact that being an embedding is an open condition.\QEDB\par

\vspace{5pt}

Next theorem gives a criterion under which the degree of a general deformed morphism is one half of the initial degree. 

\begin{theorem}\label{2:1}
In the situation of Set-up ~\ref{setup1}, assume $n$ is even, $n\geq 4$, and the following holds;
\begin{itemize}
    \item[(1)] $\pi=p_1\circ\pi_1$ where $\pi_1:X\to X_1$ is a
 \textcolor{black}{abelian} cover  of degree ${n}/{2}$ with trace zero module $$\mathcal{E}_2= p_1^*\left(\mathcal{O}_Y(-k_1')\oplus\cdots\oplus\mathcal{O}_Y(-k'_{\frac{n}{2}-1})\right), \ \ 
    k_1' \leq \cdots \leq k'_{\frac{n}{2}-1}$$ \textcolor{black}{and branch divisor $$ E = \Sigma_i p_1^*(D_i)$$ where $p_1^*D_i$ are the irreducible components of \textcolor{black}{$E$} and $D_i$ are divisors on $Y$} and $p_1:X_1\to Y$ is a double cover with trace zero module $\mathcal{E}_2=\mathcal{O}_Y(-l)$,
    \item[(2)] $k_1'>\textrm{max}\big\{2l,d_r\big\}$ and $d_r\geq l$.
\end{itemize}  
Then, a general element $\varphi'$ of the algebraic formally semiuniversal deformation space of $\varphi$ is \textcolor{black}{a morphism, which is} finite \textcolor{black}{and} of degree $\frac{n}{2}$ \textcolor{black}{onto its image}. If, moreover, one of the following holds;
\begin{itemize}
    \item[(1')] $d_s=l$ for some $1\leq s\leq r$, or
    \item[(2')] $r>\frac{N}{2}$, and $d_{2r-N}\geq l$,
\end{itemize}
then, a general element \textcolor{black}{$\varphi'$} of the algebraic formally semiuniversal deformation space of $\varphi$ is \textcolor{black}{a} finite \textcolor{black}{morphism} of degree $\frac{n}{2}$, onto a smooth variety.
\textcolor{black}{In particular, $\varphi'$ is flat.} \textcolor{black}{Moreover in this case the algebraic semiuniversal deformation spaces of both $\varphi$ and $X$ are smooth and uniruled.}
\end{theorem}

\noindent\textit{Proof.} We have the following short exact sequence where $\varphi_1=i\circ p_1$ (see \cite[Lemma 3.3]{Go});
\begin{equation}\label{ge}
    0\to\mathcal{N}_{p_1}\to\mathcal{N}_{\varphi_1}\to p_1^*\mathcal{N}_{Y/\mathbb{P}^N}\to 0.
\end{equation}
Note that $h^0(\mathcal{N}_{p_1}\otimes \mathcal{E}_2)=\sum\limits_{i=1}^{\frac{n}{2}-1}h^0(\mathcal{O}_B(2l-k_i'))$, where $B\in|\mathcal{O}_Y(2l)|$ is the branch divisor of $p_1$. It is easy to see from the following exact sequence;
$$0\to\mathcal{O}_Y(-k_i')\to \mathcal{O}_Y(2l-k_i')\to \mathcal{O}_B(2l-k_i') \textcolor{black}{\longrightarrow 0},$$
that $h^0(\mathcal{N}_{p_1}\otimes \mathcal{E}_2)=\sum\limits_{i=1}^{\frac{n}{2}-1}h^0(\mathcal{O}_Y(2l-k_i'))$. By assumption, $k_1'>2l$, and consequently, $k_i'>2l$, for all $i$. Thus, $h^0(\mathcal{N}_{p_1}\otimes \mathcal{E}_2)=0$. Also, since $k_j'>d_i$, we obtain $$h^0(p_1^*(\mathcal{N}_{Y/\mathbb{P}^N}\otimes\mathcal{E}_2))=\sum\limits_{i=1}^{r}\sum\limits_{j=1}^{\frac{n}{2}-1}h^0(\mathcal{O}_Y(d_i-k_j'))+\sum\limits_{i=1}^{r}\sum\limits_{j=1}^{\frac{n}{2}-1}h^0(\mathcal{O}_Y(d_i-k_j'-l))=0.$$ Consequently, by tensoring the exact sequence ~\eqref{ge} by $\mathcal{E}_2$ and taking the long exact sequence of cohomology, one finds that $h^0(\mathcal{N}_{\varphi_1}\otimes \mathcal{E}_2)=0$. It follows from Proposition ~\ref{corweh} that any deformation of $\varphi$ is of degree greater than or equal to $\frac{n}{2}$, since it factors through a deformation of $\pi_1$.\par 
Since $\varphi$ is unobstructed (Lemma ~\ref{unobs}), $h^1(\mathcal{N}_{p_1})=0$ and $h^0(\mathcal{N}_{Y/\mathbb{P}^N}\otimes \mathcal{O}_Y(-l))\neq 0$, it follows that there exists a deformation $\Psi:\mathcal{X}_1\to\mathbb{P}_T^N$ over a smooth pointed algebraic curve $(T,0)$ for which $\Psi_0=\varphi_1$ and $\Psi_t$ is of degree $1$ for $t\neq 0$. \textcolor{black}{Now notice that $H^i(\mathscr{O}_{X_1}) = H^i(\mathscr{O}_Y) \bigoplus H^i(\mathscr{O}_Y(-l)) = 0$ for \textcolor{black}{i = $1$, $2$} since $Y$ is at least a threefold and the intermediate cohomology vanishes.  Also we have $H^1(p_1^*D_i) = H^1(D_i) \bigoplus H^1(D_i \otimes \mathscr{O}_Y(-l)) = 0$ for the same reason.} It follows from \textcolor{black}{Proposition} ~\ref{jayan}, that, possibly after shrinking $T$, there exists $\Pi:\mathcal{X}\to\mathcal{X}_1$ such that $\Pi_0=\pi_1$. Thus, $\Phi=\Psi\circ\Pi:\mathcal{X}\to\mathbb{P}^N_T$ is a deformation of $\Phi_0=\varphi$ such that $\Phi_t$ of degree $\frac{n}{2}$ for all $t\neq 0$. Thus, a general element of the algebraic formally semiuniversal deformation space of $\varphi$ will have degree less than or equal to $\frac{n}{2}$. The conclusion follows since we have already showed that any deformation of $\varphi$ is of degree greater than or equal to  $\frac{n}{2}$.\par 
The last assertion is clear from Theorem ~\ref{theorem.embedding}, since under the assumption, $\varphi_1$ deforms to an embedding, and being an embedding is an open condition. \par
\textcolor{black}{The deformation space of $\varphi$ is fibered over the deformation space of $\varphi_1$ by a product of projective spaces and is hence uniruled. Since $\varphi_1$ is unobstructed and $H^1(N_{\varphi_1}) \to H^1(T_{X_1})$ surjects, we have that the deformation space of $X$ is fibered over the deformation space of $X_1$ by a product of projective spaces and is hence smooth and uniruled.} That completes the proof.\QEDB\par

\color{black}
\begin{remark}
Let $Y'$ be the image of $\varphi'$.
Note that, if, in addition to (1) and (2), hypothesis (1') and (2') in Theorem~\ref{2:1} are assumed, then,  the length of the fiber \textcolor{black}{of}  $\varphi'$, \textcolor{black}{over any closed point of $Y'$,}   is $n/2$. 
If (1') and (2') are not assumed, then $Y'$ might be singular and one can only guarantee that the length of the fiber \textcolor{black}{of}  $\varphi'$, 
\textcolor{black}{over any closed point of a dense open set of $Y'$,}   is $n/2$. 
\end{remark}

\color{black}
\begin{remark}\label{cls}
In the situation of Set-up ~\ref{setup1}, assume that 
\textcolor{black}{$k_1\geq 2$}. This is equivalent to  
$\varphi$ being induced by the complete linear series $|\varphi^*\mathcal{O}_{\mathbb{P}^N}(1)|$. Then, for any deformation $\Phi:\mathcal{X}\to\mathbb{P}^N_T$ of $\varphi$ over a smooth \textcolor{black}{variety} $(T,0)$ \textcolor{black}{(in particular, for those
in Proposition~\ref{nonexistence} and, \textcolor{black}{provided $k_1 \geq 2$,} Theorems~\ref{birational}, \ref{theorem.embedding} and \ref{2:1})}, we may assume (possibly after shrinking $T$), that $\Phi_t$ is induced by a complete linear series. This is a consequence of semicontinuity and the fact that being a non--degenerate morphism (in the sense that the image is not contained in any hyperplane) is an open condition.
\textcolor{black}{In particular, if $\varphi'$ corresponds to a general element of the algebraic formally semiuniversal deformation space of $\varphi$, then $\varphi'$ is  induced by a complete linear series. In particular, if $k_i\geq 2$ for all $i$, then the subvarieties $\Phi_t(\mathcal X_t)$ of Theorem~\ref{theorem.embedding} are embedded in $\mathbb P^N$ by complete linear series.}
\end{remark}

We end this section with a Barth-type result for covers of complete intersections, with a flavor similar to
\cite[Proposition 3.1]{L} for covers of projective space. Note that, unlike there, our result does not depend on the degree of the cover.

\begin{corollary}
In the situation of Set-up ~\ref{setup1}, assume the hypothesis of Theorem ~\ref{theorem.embedding} (a) is satisfied and $N\geq 2r+2$. Then Pic$(X)=\mathbb{Z}$. 
\end{corollary}

\noindent\textit{Proof.} 
Let $(T,0)$ be \textcolor{black}{a smooth} algebraic curve and let $\Phi:\mathcal{X}\to\mathbb{P}_t^N$ be a deformation for which $\Phi_t$ is an embedding for all $t\neq 0$. Since $H^2(\mathcal{O}_X)=0$, it follows that Pic$(X)\hookrightarrow\textrm{Pic}(\mathcal{X}_t)$.
By the theorem of 
\textcolor{black}{Barth-Larsen (see 
\cite{Larsen}, see also
\cite[Corollary 3.2.3]{positivityI})}, we know that Pic$(\mathcal{X}_t)=\mathbb{Z}$. The conclusion follows from the projectivity of $X$.\QEDB

\section{Abelian and dihedral covers}\label{secabel}

Theorems~\ref{birational} and \ref{theorem.embedding} work for 
finite covers of complete intersections of dimension 
$m \geq 3$ that satisfy conditions (2.1) and (2.2) of Set-up~\ref{setup1}. \textcolor{black}{Theorem~\ref{2:1} also works for 
finite covers of complete intersections  satisfying conditions (2.1) and (2.2) of Set-up~\ref{setup1} and possessing an intermediate cover of certain kind.} We see now that these conditions are satisfied by  large classes of covers of complete intersection subvarieties, namely, abelian covers  and
simple dihedral covers.  \textcolor{black}{Now we give the details of the construction of both kinds of covers. In this section, $Y$ is a smooth variety which is \emph{not necessarily a complete intersection}.} 

\subsection{\textcolor{black}{Abelian covers}}
\label{subsection.abelian.dihedral}

We recall the definition of an abelian cover. 

\color{black}

\begin{definition}\label{defgalois}
 Let \textcolor{black}{$Y$} be a variety and let $G$ be a finite group. A {\it Galois cover of \textcolor{black}{$Y$} with Galois group $G$} is a finite flat morphism
 \textcolor{black}{$\pi:X \to Y$} together with a faithful action of $G$ on $X$ that exhibits $Y$ as a quotient of \textcolor{black}{$X$} via $G$. We call a Galois cover {\it abelian}, if $G$ is abelian.
\end{definition}

\color{black}
\begin{remark}
\textcolor{black}{If $\pi: X \to Y$ is an abelian cover, then}
 the vector bundle $\pi_*\mathcal O_X$ 
 splits as a direct sum of line bundles on $Y$. 
 \color{black}
 More precisely 
 (see e.g. \cite[(1.1)]{Pa})
 \begin{equation}\label{formula.O_X.splitting}
 \pi_*\mathscr{O}_X=\bigoplus\limits_{\chi\in G^*}L_{\chi}^{-1},
 \end{equation} 
 where \textcolor{black}{$G^*$ is the character group of $G$,} $L_\chi$ is a line bundle on $Y$, $G$ acts on $L_\chi$ via 
 the character $\chi$ and the invariant summand $L_1$ is isomorphic to $\mathcal O_Y$. \textcolor{black}{In particular, an abelian cover of a complete intersection subvariety  satisfies (2.1) of Set-up~\ref{setup1}.} 
\end{remark}

 \color{black}
 \smallskip
 \textcolor{black}{In order to check abelian covers satisfy (2.2) of Set-up~\ref{setup1}} we need to know the structure of $\pi_*\mathcal N_\pi$. For this we need to introduce some notation (see \cite{Pa} for 
 further details).
 Let $D$ be the branch divisor of $\pi$. Let $\mathscr{C}$ be the set of cyclic subgroups of $G$ and for all $H\in\mathscr{C}$, denote by $S_H$ the set of generators of the group of characters $H^*$. Then, we may write $$D=\sum\limits_{H\in\mathscr{C}}\sum\limits_{\psi\in S_H}D_{H,\psi}$$ where $D_{H,\psi}$ is the sum of all the components of $D$ that have inertia group $H$ and character $\psi$. For every $\chi\in G^*$, and $H\in \mathscr{C}$, and for every $\psi\in S_H$, one may write $\chi|_H=\psi^{i_{\chi}}$, $i_{\chi}\in\{0,\cdots,m_H-1\}$, where $m_H$ is the order of $H$.
 For every character $\chi\in G^*$, let 
 \begin{equation*}
     {S_{\chi}=\{(H,\psi):\chi|_H\neq \psi^{m_H-1}\}.}
 \end{equation*}

\smallskip

\begin{proposition}\label{pushnpi} (\cite[Corollary 4.1]{Pa}) Let $\pi:X \to Y$ be an abelian cover with Galois group $G$, with $X$ and $Y$ smooth. Assume the branch divisor $D$ of $\pi$ is normal crossing. Then,  
 $$(\pi_*\mathcal{N}_{\pi})^{\chi} \cong \bigoplus_{(H,\psi) \in S_{\chi}} \mathcal{O}_{D_{H,\psi}}(D_{H,\psi}) \otimes L_{\chi}^{-1}.$$
\end{proposition}

  \color{black}
\begin{corollary} 
Let $\pi: X \longrightarrow Y$ be as in Set-up~\ref{setup1}. 
If $\pi$ is abelian, then (2.2) of Set-up~\ref{setup1} is satisfied, i.e., $H^1(\mathcal{N}_{\pi})=0$.
\end{corollary}

\noindent\textit{Proof.} 
If follows from Proposition~\ref{pushnpi} and Remark ~\ref{int}, (4).\QEDB \par
\smallskip

\textcolor{black}{We end this subsection by proving a result on deformations of abelian covers:} 
\begin{proposition}\label{jayan}
Let $\pi: X \to Y$ be an abelian cover with $X$ and $Y$ smooth, 
projective varieties with \textcolor{black}{$H^1(\mathscr{O}_Y) = $} $H^2(\mathscr{O}_Y) = H^1(D_i) = 0$ for all irreducible components $D_i$ of the branch divisor $D$. Suppose $p: \mathscr{Y} \to T$ be a deformation of $Y$ ($p$ is proper, flat and surjective) over a smooth \textcolor{black}{algebraic} curve $T$. Then there exists a deformation $\Pi: \mathscr{X} \to \mathscr{Y} \to T$ of $\pi$ over $T$.
\end{proposition}

\color{black}

\begin{proof}
By \cite[Theorem $2.1$]{Pa}, if $G$ is the  \textcolor{black}{Galois} group \textcolor {black}{of $\pi$}, then \textcolor{black}{$\pi$} is completely determined by \textcolor{black}{the} line bundles $L_{\chi},$ \textcolor{black}{the divisors $D_{H, \psi}$ above,} corresponding to a choice of a cyclic subgroup $H \subseteq G$ and a generator $\psi \in H^*$, and \textcolor{black}{linear equivalent relations} of the form
\begin{equation}\label{eq.abelian.cover.relations}
    L_{\chi}+L_{\chi'} = L_{\chi\chi'}+\Sigma_{H}\Sigma_{\psi} \epsilon_{\chi, \chi'}^{H,\psi} D_{H,\psi}.
\end{equation}
Now note that, since $H^2(\mathscr{O}_Y)=0$, \textcolor{black}{by \cite[Propositions 2.2.5 (iv), 2.3.6]{Ser},} all the line bundles $L_{\chi}$ lift \textcolor{black}{to} $\mathscr{Y} \to T$. Also since $H^1(D_i) = 0$, \textcolor{black}{by  \cite[Remark 2.10]{moduli},} every $D_{H,\psi}$ lifts to a divisor on $\mathscr{Y} \to T$. Now, since  \textcolor{black}{\eqref{eq.abelian.cover.relations}} is satisfied in the central fibre \textcolor{black}{$\mathscr{Y}$}, and $Y$ is regular, the same linear equivalence relation holds \textcolor{black}{on $\mathscr{Y}_t$}, for a general $t \in T$ and, once again, by \cite[Theorem $2.1$]{Pa} we have a family of abelian covers $\mathscr{X} \to \mathscr{Y} \to T$ \textcolor{black}{extending $\pi$}.
\end{proof}

\color{black}

\subsection{Simple dihedral covers} We will see now that conditions
(2.1) and (2.2) of 
our set-up~\ref{setup1} also hold for
simple dihedral covers. \color{black}
Dihedral covers were studied by Catanese and Perroni in \cite{CP}. We denote the dihedral group of order $2n'$ by $D_{n'}$ and we recall the following theorem.

\smallskip

\color{black}

\begin{theorem}\label{cp}
(\cite[Theorem 6.1]{CP}). Let $Y$ be a smooth variety and let $n'\geq 3$ be an integer. Let $L\to Y$ be a line bundle with $s_1\in H^0(L^{\otimes n'})$ and $s_2\in H^0(L^{\otimes 2})$ such that
\begin{itemize}
    \item[(1)] The zero locus of $s_1^2-s_2^{n'}$ is smooth in the open set $s_2\neq 0$, and 
    \item[(2)] The divisors $\{s_1=0\}$ and $\{s_2=0\}$ intersect transversally.
\end{itemize}
Define $X\subset L\oplus L$ to be the variety defined by $(u,v)\in L\oplus L\oplus L$ that satisfies the following equations:
\begin{equation*}
    uv=s_2,\quad u^{n'}-2s_1v+s_2^{n'}=0.
\end{equation*}
Then the restriction to $X$ of the fiber bundle projection $L\oplus L\to Y$ is a  \textcolor{black}{Galois} cover with \textcolor{black}{group $D_{n'}$ and} branch divisor $\{s_2^{n'}-s_1^2=0\}$. Furthermore, if $\{s_1=0\}\cap \{s_2=0\}\neq\phi$, then $X$ is irreducible.
\end{theorem}

It was shown in \cite[Proposition 6.3]{CP} that the condition (2) in Theorem ~\ref{cp} is satisfied for a general choice of $s_1$ and $s_2$. Furthermore, it is easy to see that the condition (2) of the theorem and the irreducibility of $X$ can be simultaneously guaranteed if $L$ is ample and globally generated.

\begin{definition}
A {\it simple $D_{n'}$ cover} of $Y$ is the \textcolor{black}{Galois} cover $\pi:X\to Y$ \textcolor{black}{with group $D_{n'}$}
given as in Theorem ~\ref{cp} by the restriction to $X$ of the fiber bundle
projection $L\oplus L\to Y$.
\end{definition}
Note that a simple $D_{n'}$ cover $\pi$ has degree $n=2n'$. Given a simple $D_{n'}$ cover \textcolor{black}{$\pi$}, the following formulas for the push-forward of the structure sheaf and for the  canonical bundle hold (\cite[\S 6.1]{CP}):
\begin{equation}\label{pushdn}
    \pi_*\mathcal{O}_X=\bigoplus\limits_{i=0}^{n'-1}[\mathcal{O}_Y(-iL)\oplus\mathcal{O}_Y(-(n'-i)L)],\quad K_X=\pi^*(K_Y(n'L)).
\end{equation}
In particular, simple dihedral covers satisfy (2.1) of Set-up~\ref{setup1}.

\begin{theorem}\label{sedn}
Let $\pi: X \to Y$ be a simple \textcolor{black}{$D_{n'}$} cover corresponding to a line bundle $L$ and sections $s_1 \in H^0(L^{\otimes n'})$ and $s_2 \in H^0(L^{\otimes 2})$. Then we have the following exact sequence of sheaves on $Y$
\begin{equation*}
    0 \to \pi_*\pi^*(L^{\oplus 2}) \to [\mathcal{O}_Y(2L) \oplus \mathcal{O}_Y(n'L)] \otimes \pi_*\mathcal{O}_X \to \pi_*(N_{\pi}) \to 0
\end{equation*}
\end{theorem}

\noindent\textit{Proof.} Let $V = L \oplus L$. By abuse of notation, we use $V$ to denote the total space of the vector bundle as well.   We have the following exact sequences (see \cite{CP})
\begin{equation*}
    0 \to \pi_*T_X \to \pi_*(T_V|_X) \to [\mathcal{O}_Y(2L) \oplus \mathcal{O}_Y(n'L)] \otimes \pi_*(\mathcal{O}_X) \to 0
\end{equation*}
\begin{equation*}
    0 \to \pi_*(T_X) \to \pi_*\pi^*(T_Y) \to \pi_*(N_{\pi}) \to 0
\end{equation*}
\begin{equation*}
    0 \to \pi_*(T_{V/Y}|_X) \to \pi_*(T_V|_X) \to \pi_*\pi^*T_Y \to 0
\end{equation*}

The exact sequences above induce a commutative diagram with exact rows and columns.
\[ 
\begin{tikzcd}
      & & 0 \arrow{d} & & \\
     & & \pi_*(T_{V/Y}|_X) \arrow{d} & & \\
     0 \arrow{r} & \pi_*T_X \arrow{r} \arrow{d} & \pi_*(T_V|_X) \arrow{d} \arrow{r} & (\mathcal{O}_Y(2L) \oplus \mathcal{O}_Y(n'L)) \otimes \pi_*(\mathcal{O}_X) \arrow{d} \arrow{r} & 0   \\
     0 \arrow{r} & \pi_*T_X  \arrow{r}  & \pi_*\pi^*(T_Y) \arrow{r} \arrow{d} & \pi_*(N_{\pi}) \arrow{r} & 0 \\
     & & 0 & & 
\end{tikzcd} 
\]
Applying the snake lemma and noting that $(T_{V/Y}|_X) = \pi^*(L^{\oplus 2})$, we have our exact sequence.\QEDB\par 

\begin{corollary}
In the situation of Set-up ~\ref{setup1} (1), let $\pi$ be a smooth, simple dihedral cover of $Y$. Then $H^1(\mathcal{N}_{\pi})=0$, i.e., (2.2) of Set-up~\ref{setup1} is satisfied.
\end{corollary}

\noindent\textit{Proof.} The corollary follows from the short exact sequence of Theorem ~\ref{sedn}, projection formula, ~\eqref{pushdn}, and Remark ~\ref{int} (4).\QEDB\par 

\color{black}

\subsection{Subcanonical covers}
If $\pi$ is simple cyclic (see Remark~\ref{remark.cyclic.subcanonical}), an iteration of simple cyclic covers as in Proposition~\ref{prop.ci} (i), or simple dihedral (see \eqref{pushdn}), the canonical bundle of $X$ is a pullback of a line bundle of $\mathbb P^N$. This property generalizes subcanonical subvarieties. We collect \textcolor{black}{here all these related definitions}:   

\begin{definition}\label{defsubcan}
Let $\hat X$ be a smooth projective variety and let $\hat \varphi:\hat X\to\mathbb{P}^N$ be a morphism. Let
\begin{equation*}
   L={\hat \varphi}^*\mathcal{O}_{\mathbb{P}^N}(1)\textrm{ and }s\in\mathbb{Z}.
\end{equation*}
\begin{enumerate}
    \item The polarized variety $(\hat X,L)$ is said to be {\it $s$--subcanonical} if $K_{\hat X}=\varphi^*\mathcal{O}_{\mathbb{P}^N}(s)$.
    \item  The morphism $\hat \varphi$ is called $s$--subcanonical if $(\hat X,L)$ is $s$--subcanonical and $\hat \varphi$ is induced by the complete linear series $|L|$.
    \item If $\hat X$ is embedded in $\mathbb P^N$, then $\hat X$ is an
    $s$--subcanonical subvariety if $K_{\hat X}=\mathcal{O}_{\mathbb{P}^N}(s)$. \end{enumerate}
\end{definition}

\begin{remark}\label{remark.subcanonical}
We note that, an s–subcanonical polarized scheme $(\hat X, L)$ is
\begin{itemize}
    \item[(1)] a Fano polarized scheme of index $-s$, in the sense of Fujita 
    (see \cite[Definition 1.5]{Fujita}), if $s < 0$;
    \item[(2)] a polarized scheme of Calabi–Yau, if $s = 0$;
    \item[(3)] a canonically polarized scheme of general type, if $s = 1$;
    \item[(4)] an s–subcanonical polarized scheme of general type, if $s > 0$.
\end{itemize}
\end{remark}

Proposition~\ref{nonexistence} and Theorems~\ref{birational}, \ref{theorem.embedding}
and \ref{2:1}, when applied to a subcanonical morphism $\varphi$, produce subcanonical morphisms, and, when applicable, subcanonical subvarieties: 

\begin{proposition}\label{prop.subcan.subvar}
In the situation of Set-up ~\ref{setup1}, 
\textcolor{black}{assume $\varphi$ is $s$-subcanonical.}
Let $\Phi:\mathcal{X}\to \mathbb{P}^N_T$ be a flat family of deformations of $\varphi:X\to\mathbb{P}^N$ over a smooth \textcolor{black}{variety} $(T,0)$. After shrinking $T$ if necessary, 
the morphism $\Phi_t$ is $s-$subcanonical. In particular, if 
 $\varphi': X' \longrightarrow \PP^N$ is a general element of the 
algebraic formally semiuniversal deformation space of $\varphi$, then $\varphi'$ is $s$-subcanonical. 
 In particular, if $\Phi_t$ is an embedding, then $\Phi_t(\mathcal X_t)$ is an $s$-subcanonical subvariety. 
\end{proposition}

\noindent\textit{Proof.}
The result follows from Remark~\ref{cls}, the regularity of $X$ (see Remark~\ref{int} (6)) and the fact that
$R^2p_{*}\mathbb Z_2$ is locally constant, where $p: \mathcal X \longrightarrow T$ is the deformation of $X$ induced by $\Phi$. \QEDB

\section{Necessary and sufficient conditions for
non--complete intersections
}\label{secnonci}

Theorem~\ref{theorem.embedding} gives a systematic way to construct
subvarieties of projective space, starting from complete intersection subvarieties. It is therefore natural to ask whether the subvarieties so obtained are complete intersections or not.
The answer is that both things may occur.  Next  Propositions~\ref{nci} and \ref{prop.ci} and Corollary~\ref{kpr}  give sufficient conditions for each situation to happen.
In either of the two cases, we show that, in the boundary of the irreducible Hilbert component of a subvariety produced by using  
Theorem~\ref{theorem.embedding},  lie points that correspond to ropes. Note that a rope of multiplicity bigger than $2$ is not a complete intersection, neither locally and, therefore, nor globally. 

\smallskip

The following proposition gives a sufficient condition for a 
subvariety obtained from a deformation of $\varphi$ to be  
a non--complete intersection.

\begin{proposition}\label{nci}
In the situation of Set-up ~\ref{setup1}, 
\textcolor{black}{assume $\varphi$ is $s$-subcanonical,}
Assume $\Phi:\mathcal{X}\to \mathbb{P}^N_T$ is a flat family of deformations of $\varphi:X\to\mathbb{P}^N$ over a smooth \textcolor{black}{variety} $(T,0)$, such that $\Phi_t:\mathcal{X}_t\to\mathbb{P}^N$ is an embedding for $t\neq 0$. \textcolor{black}{Suppose there are no integers $d'_1, d'_2, \dots, d'_r$, $d'_i > 1$ such that 
\begin{equation*}
   \textrm{(a) } \sum\limits_{i=1}^{r}d_i'= s + N +1 \textrm{, and (b) }\prod\limits_{i=1}^{r}d_i'=n\prod\limits_{i=1}^{r}d_i.
\end{equation*}
Then, shrinking $T$ if necessary,  $\Phi_t(\mathcal{X}_t)$ is not a complete intersection if $t \neq 0$.}
\end{proposition}
\noindent\textit{Proof.} 
\textcolor{black}{It follows from  Proposition~\ref{prop.subcan.subvar} that, after shrinking $T$ if necessary, $\Phi_t(\mathcal{X}_t)$ is $s-$subcanonical. Suppose now that $\Phi_t(\mathcal{X}_t)$, $t \neq 0$, is a complete intersection of multidegree $\underline{d}=(d'_1,d'_2,\cdots,d'_r)$.} Since, $K_X'=\Phi_t^*\mathcal{O}_{\mathbb{P}^N}(\sum\limits_{i=1}^r d_i'-N-1)$ , we get that $\sum\limits_{i=1}^r d_i'-N-1=\textcolor{black}{s}$, consequently (a) \textcolor{black}{ should hold}. To see \textcolor{black}{that (b) should also hold}, notice that \textcolor{black}{
$n\prod\limits_{i=1}^{r}d_i$ is the top self-intersection of the pullback, by $\varphi$ to $X$, of the hyperplane section of $\mathbb P^N$ and $\prod\limits_{i=1}^{r}d'_i$ is the top self-intersection of the pullback, by $\Phi_t$ to $\mathcal X_t$, of the hyperplane section of $\mathbb P^N$, so they are equal.} The proof is now complete.\QEDB\par 

\vspace{5pt}

We now show that codimension two examples produced by Theorem ~\ref{theorem.embedding}  (a) are always complete intersections. We \textcolor{black}{ prove} this fact by means of the following lemma and its corollary. In what follows, for a subvariety $j:\mathbb{Z}\hookrightarrow\mathbb{P}^{M}$, we will have $\mathcal{O}_Z(1)=j^*\mathcal{O}_{\mathbb{P}^M}(1)$, and $H^i_*(\mathcal{F}):=\bigoplus_{\nu\in\mathbb{Z}} H^i(\mathcal{F}(\nu))$.
\begin{lemma}\label{k}
In the situation of Set-up ~\ref{setup1}, assume the hypothesis of Theorem ~\ref{theorem.embedding} (a) or (b) is satisfied. 
Let $(T,0)$ be a smooth irreducible curve satisfying the conditions 
(I), (II), and (III) of Theorem ~\ref{theorem.embedding}. 
Then, possibly after shrinking $T$, $H^i_*(\mathcal{I}_{X_t})=0$ 
for all $2\leq i\leq m$ and $t\neq 0$, where $\mathcal{I}_{X_t}$ 
is the ideal sheaf of $X_t$ inside $\mathbb{P}^N_t$.
\end{lemma}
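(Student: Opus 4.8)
\textit{Proof proposal.} Write $m=\dim X=\dim Y=N-r\ (\geq 3)$ and put $\mathcal{L}:=\Phi^*\mathcal{O}_{\mathbb{P}^N_T}(1)$, a line bundle on $\mathcal{X}$ with $\mathcal{L}_t=\Phi_t^*\mathcal{O}_{\mathbb{P}^N}(1)$ on each fibre $X_t$; in particular $\mathcal{L}_0=\varphi^*\mathcal{O}_{\mathbb{P}^N}(1)=\pi^*\mathcal{O}_Y(1)$ by Set-up~\ref{setup1}. Since $\Phi_t$ is a closed immersion for $t\neq 0$ and is finite for $t=0$, $\mathcal{L}_t$ is ample on every fibre, so $\mathcal{L}$ is relatively ample for $p:\mathcal{X}\to T$; also $p$ is smooth and proper by conditions (I) and (II) of Theorem~\ref{birational}, so it has a relative dualizing line bundle $\omega_{\mathcal{X}/T}$. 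For $t\neq 0$ the structure sequence $0\to\mathcal{I}_{X_t}(\nu)\to\mathcal{O}_{\mathbb{P}^N_t}(\nu)\to\mathcal{O}_{X_t}(\nu)\to 0$ together with $H^k(\mathbb{P}^N,\mathcal{O}(\nu))=0$ for $1\le k\le N-1$ gives $H^i(\mathcal{I}_{X_t}(\nu))\cong H^{i-1}(X_t,\mathcal{L}_t^{\nu})$ for all $2\le i\le m$ and all $\nu\in\mathbb{Z}$ (note $1\le i-1\le m-1<N$ and $2\le i\le m<N$). Hence it suffices to show that, after shrinking $T$, one has $H^j(X_t,\mathcal{L}_t^{\nu})=0$ for all $1\le j\le m-1$, all $\nu\in\mathbb{Z}$ and all $t\neq 0$.

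The next step is to reduce this to finitely many twists $\nu$. By relative Serre vanishing applied to the relatively ample $\mathcal{L}$, there is an integer $\nu_0$ with $R^jp_*(\mathcal{L}^{\nu})=0$ and $R^jp_*(\omega_{\mathcal{X}/T}\otimes\mathcal{L}^{\nu})=0$ for all $j\ge 1$ and all $\nu\ge\nu_0$; by cohomology and base change this yields $H^j(X_t,\mathcal{L}_t^{\nu})=0$ for $j\ge 1$, $\nu\ge\nu_0$, $t\in T$, and by fibrewise Serre duality $H^j(X_t,\mathcal{L}_t^{\nu})\cong H^{m-j}(X_t,\omega_{X_t}\otimes\mathcal{L}_t^{-\nu})^{\vee}$, so that (since $1\le j\le m-1$ forces $1\le m-j\le m-1$) also $H^j(X_t,\mathcal{L}_t^{\nu})=0$ for $1\le j\le m-1$, $\nu\le -\nu_0$, $t\in T$, uniformly in $t$. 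Thus only the finitely many $\nu$ with $-\nu_0<\nu<\nu_0$ remain to be controlled.

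Finally, fix such a $\nu$ and an index $1\le j\le m-1$. By upper semicontinuity of $t\mapsto h^j(X_t,\mathcal{L}_t^{\nu})$ it is enough to verify the vanishing at $t=0$. There, using $\mathcal{L}_0=\pi^*\mathcal{O}_Y(1)$, finiteness of $\pi$ and the splitting $\pi_*\mathcal{O}_X=\mathcal{O}_Y\oplus\bigoplus_{i=1}^{n-1}\mathcal{O}_Y(-k_i)$,
\[
H^j(X_0,\mathcal{L}_0^{\nu})=H^j(Y,\mathcal{O}_Y(\nu))\oplus\bigoplus_{i=1}^{n-1}H^j(Y,\mathcal{O}_Y(\nu-k_i)),
\]
and every summand vanishes by Remark~\ref{int}(4) because $1\le j\le m-1=N-r-1$ and $Y$ is a complete intersection. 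Hence there is an open $U_{\nu,j}\ni 0$ in $T$ with $h^j(X_t,\mathcal{L}_t^{\nu})=0$ on $U_{\nu,j}$; intersecting the finitely many $U_{\nu,j}$ and replacing $T$ by this intersection completes the argument. The one genuinely non-formal point is the passage to finitely many twists, i.e.\ producing a bound on $\nu$ uniform over $T$; this is exactly what relative Serre vanishing (combined with relative duality) supplies, while the rest is the structure sequence, semicontinuity, and the known cohomology of complete intersections. \QEDB
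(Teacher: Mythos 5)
Your proposal is correct and follows essentially the same route as the paper: the structure sequence identifies $H^i(\mathcal{I}_{X_t}(\nu))$ with $H^{i-1}(X_t,\mathcal{L}_t^{\nu})$, the central fibre vanishing is computed via the splitting $\pi_*\mathcal{O}_X=\mathcal{O}_Y\oplus\bigoplus\mathcal{O}_Y(-k_i)$ and the cohomology of complete intersections, and one concludes by semicontinuity. The only difference is that you additionally justify, via relative Serre vanishing and fibrewise Serre duality, the reduction to finitely many twists $\nu$ so that a single shrinking of $T$ works for all $\nu$ simultaneously --- a uniformity point the paper's one-line appeal to semicontinuity leaves implicit.
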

\noindent\textit{Proof.} We have the short exact sequence 
$0\to \mathcal{I}_{X_{t}}\to\mathcal{O}_{\mathbb{P}^N_{t}} 
\to \mathcal{O}_{X_{t}}\to 0$. 
It is easy to see that $H^i(\varphi^*\mathcal{O}_{\mathbb{P}^N}(k))=0$ 
for all $1\leq i\leq m-1$, $k\in\mathbb{Z}$. The conclusion follows by 
semicontinuity and the fact that 
$H^i_*(\mathcal{O}_{\mathbb{P}^N_{t}})=0$ 
for all $2\leq i\leq m$.\QEDB\par 

\begin{corollary}\label{kpr}
In the situation of Set-up ~\ref{setup1}, assume $r=2$ 
\textcolor{black}{and 
$m \geq 4$.}
Assume the hypothesis of Theorem ~\ref{theorem.embedding} (a) is satisfied, 
and let $(T,0)$ be a smooth irreducible curve satisfying the 
conditions (I), (II), and (III) of Theorem ~\ref{theorem.embedding}. 
Then (shrinking $T$ if necessary), 
$\Phi_{t}:X_{t}\hookrightarrow\mathbb{P}^N_{t}$ embeds 
$X_{t}$ as a complete intersection, 
for $t\neq 0$. \end{corollary}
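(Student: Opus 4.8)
\noindent\textit{Plan of proof.}
The plan is to show that for $t\neq 0$ the subvariety $X_t\hookrightarrow\mathbb{P}^N_t$ is smooth, subcanonical, arithmetically Cohen--Macaulay and of codimension $r=2$, and then to conclude via the Serre construction together with \cite{KPR}. First note that, under the hypothesis of Theorem~\ref{birational}~(2)(a), one has $k_1=d_{l_1}\geq d_1\geq 2$, so $\varphi$ is induced by the complete linear series $|\pi^*\mathcal{O}_Y(1)|$ (Remark~\ref{int}~(5)); hence, by Remark~\ref{cls}, after shrinking $T$ we may assume $\Phi_t$ is induced by a complete linear series $|\mathcal{L}_t|$ with $\mathcal{L}_t=\mathcal{O}_{X_t}(1)$. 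Since the canonical sheaf is preserved in the flat family $\mathcal{X}\to T$ and $X=X_0$ is $s$--subcanonical (cf.\ Lemma~\ref{nci}), we obtain $\omega_{X_t}\cong\mathcal{O}_{X_t}(s)$.

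The crucial point is arithmetic Cohen--Macaulayness of $X_t$, i.e.\ $H^i_*(\mathcal{I}_{X_t})=0$ for $1\leq i\leq m$ (with $m=\dim X_t=N-2$). By Lemma~\ref{k} this holds for $2\leq i\leq m$ after shrinking $T$, so only $H^1_*(\mathcal{I}_{X_t})=0$, the projective normality of $X_t$, is left. Here I would pass to the flat limit: by Theorem~\ref{birational}~(2) the family $\mathrm{Im}\,\Phi\to T$ is flat with central fibre the embedded rope $\widetilde{Y}$ on $Y$ with conormal bundle $\mathcal{E}$. As $r\geq n-1$ forces $n\in\{2,3\}$ and $n=2$ is the double-cover case of \cite{BG20} (in which the limit ribbon is itself a complete intersection in $\mathbb{P}^N$), it suffices to treat $n=3$: then $\mathcal{E}=\mathcal{N}_{Y/\mathbb{P}^N}^*$, the defining surjection $\mathcal{N}_{Y/\mathbb{P}^N}^*\twoheadrightarrow\mathcal{E}$ must be an isomorphism, and therefore $\mathcal{I}_{\widetilde{Y}}=\mathcal{I}_Y^2$. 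Since $Y$ is a complete intersection, $\mathcal{O}_Y$ is Cohen--Macaulay and $\mathcal{I}_Y/\mathcal{I}_Y^2\cong\mathcal{N}_{Y/\mathbb{P}^N}^*$ is a locally free $\mathcal{O}_Y$--module; hence $\mathcal{O}_{\mathbb{P}^N}/\mathcal{I}_Y^2$ is an extension of two Cohen--Macaulay sheaves of dimension $\dim Y$, so it is itself Cohen--Macaulay and $\widetilde{Y}$ is arithmetically Cohen--Macaulay, in particular $H^1_*(\mathcal{I}_{\widetilde{Y}})=0$. Because $H^1(\mathcal{I}_Z(\nu))$ vanishes automatically for $\nu\leq 0$ and for $\nu\gg 0$, only finitely many twists matter, and upper semicontinuity of $\nu\mapsto h^1(\mathcal{I}_{(\mathrm{Im}\,\Phi)_t}(\nu))$ along the flat family then yields $H^1_*(\mathcal{I}_{X_t})=0$ for $t\neq 0$ (shrinking $T$). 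So $X_t$ is arithmetically Cohen--Macaulay.

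Now apply the Serre construction. As $X_t$ is a local complete intersection (being smooth) of codimension $2$ which is subcanonical with $\omega_{X_t}\cong\mathcal{O}_{X_t}(s)$, adjunction gives $\det\mathcal{N}_{X_t/\mathbb{P}^N_t}\cong\mathcal{O}_{X_t}(s+N+1)$, and there is a rank-two vector bundle $\mathcal{F}_t$ on $\mathbb{P}^N_t$ with $\det\mathcal{F}_t\cong\mathcal{O}_{\mathbb{P}^N_t}(s+N+1)$ (using $\mathrm{Pic}(\mathbb{P}^N_t)=\mathbb{Z}$) fitting in an exact sequence
\begin{equation*}
0\to\mathcal{O}_{\mathbb{P}^N_t}\to\mathcal{F}_t\to\mathcal{I}_{X_t}(s+N+1)\to 0,
\end{equation*}
local freeness of $\mathcal{F}_t$ following from the subcanonicity of $X_t$. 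From this sequence, from $H^i_*(\mathcal{O}_{\mathbb{P}^N_t})=0$ for $1\leq i\leq N-1$, and from the arithmetic Cohen--Macaulayness of $X_t$, we get $H^i_*(\mathcal{F}_t)=0$ for $1\leq i\leq m=N-2$; and since $\mathcal{F}_t$ has rank two, $\mathcal{F}_t^\vee\cong\mathcal{F}_t(-s-N-1)$, so Serre duality on $\mathbb{P}^N_t$ identifies $H^{N-1}_*(\mathcal{F}_t)$ with the dual of $H^1_*(\mathcal{F}_t)=0$. Hence $H^i_*(\mathcal{F}_t)=0$ for all $1\leq i\leq N-1$, so Horrocks' splitting criterion --- the result of \cite{KPR} we invoke --- gives $\mathcal{F}_t\cong\mathcal{O}_{\mathbb{P}^N_t}(d_1')\oplus\mathcal{O}_{\mathbb{P}^N_t}(d_2')$. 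Therefore $X_t$, the zero scheme of the section of $\mathcal{O}(d_1')\oplus\mathcal{O}(d_2')$ corresponding to $\mathcal{O}_{\mathbb{P}^N_t}\hookrightarrow\mathcal{F}_t$, is a complete intersection of multidegree $(d_1',d_2')$, with $d_1'+d_2'=\delta+a$ and $d_1'd_2'=n\,d_1d_2$ by Lemma~\ref{nci}.

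I expect the main obstacle to be the second step: pinning down that the limit of the images is precisely the rope $\widetilde{Y}$ and that $\mathrm{Im}\,\Phi\to T$ is flat (for which one relies, as in Theorem~\ref{birational}, on \cite{GGP1}, Theorem~2.2), and then proving that this everywhere non-reduced, non-lci scheme is arithmetically Cohen--Macaulay. Once $X_t$ is known to be subcanonical, arithmetically Cohen--Macaulay and of codimension two, the passage to ``complete intersection'' is the standard Serre-construction/Horrocks package of \cite{KPR}.
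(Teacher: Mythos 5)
Your overall strategy (produce the rank-two bundle via the Serre construction and split it by a cohomological criterion, feeding in vanishing obtained along the flat family) is the same as the paper's, but you take a substantially longer route, and the reason is a misreading of the key citation. The result of \cite{KPR} invoked in the paper is \emph{not} Horrocks' criterion: for a rank-two bundle on $\mathbb{P}^N$ it requires only $H^i_*=0$ in the range $2\le i\le N-2$, with no condition on $H^1_*$ or $H^{N-1}_*$. That is exactly what Lemma~\ref{k} supplies for $\mathcal{I}_{X_t}$, so the paper's proof is immediate: from $0\to\mathcal{O}_{\mathbb{P}^N_t}\to\mathcal{E}'_t\to\mathcal{I}_{X_t}(l)\to 0$ one gets $H^i_*(\mathcal{E}'_t)=0$ for $2\le i\le N-2$ and \cite{KPR} splits $\mathcal{E}'_t$; projective normality of $X_t$ is never needed or established. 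Because you insist on full Horrocks vanishing, you are forced to prove $H^1_*(\mathcal{I}_{X_t})=0$, which you do by showing the limit rope is arithmetically Cohen--Macaulay and semicontinuing. That extra work is essentially sound for $n=3$ (a surjection $\mathcal{N}_{Y/\mathbb{P}^N}^*\to\mathcal{E}$ between rank-two bundles is an isomorphism, so every such embedded rope has ideal $\mathcal{I}_Y^2$, whose homogeneous coordinate ring $S/I_Y^2$ is CM because $I_Y/I_Y^2$ is $S/I_Y$-free -- note that you should argue at the level of graded modules, since sheaf-theoretic Cohen--Macaulayness of $\widetilde{Y}$ alone does not give $H^1_*(\mathcal{I}_{\widetilde Y})=0$), but it is unnecessary, and for $n=2$ you merely defer to \cite{BG20} with an unproved parenthetical claim about the limit ribbon.

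There is one step in your argument that, as justified, does not work in the stated generality. You deduce $\omega_{X_t}\cong\mathcal{O}_{X_t}(s)$ from ``$X_0$ is $s$--subcanonical (cf.\ Lemma~\ref{nci})'', but Lemma~\ref{nci} \emph{assumes} $K_X=\varphi^*\mathcal{O}_{\mathbb{P}^N}(\delta-N-1+a)$ rather than proving it, and in Set-up~\ref{setup1} (an arbitrary abelian cover with split trace-zero module) $K_X$ need only agree with a pullback up to torsion coming from the ramification divisor; so subcanonicity of $X_0$, which your Serre construction needs in the form $\det\mathcal{N}_{X_t/\mathbb{P}^N_t}\cong\mathcal{O}_{X_t}(l)$, is not available for free. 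The paper sidesteps this by quoting the Barth-type theorem (\cite{L}): since $X_t$ is smooth of codimension two, $\operatorname{Pic}(X_t)=\mathbb{Z}\cdot\mathcal{O}_{X_t}(1)$, so the determinant of the normal bundle is automatically a restriction from $\mathbb{P}^N_t$ and \cite{OSS}, Theorem 5.1.1 applies. If you replace your subcanonicity step by this Barth argument and either treat $n=2$ honestly or (better) switch from Horrocks to the actual statement of \cite{KPR}, your proof closes; as it stands, the first paragraph's justification and the $n=2$ case are the gaps, and the rope/ACM machinery is avoidable.
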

\noindent\textit{Proof.} 
\textcolor{black}{It follows from the Barth-Larsen theorem (see 
\cite{Larsen}, see also
\cite[Corollary 3.2.3]{positivityI}) that any line bundle on $\mathcal X_t$ extends to $\PP^N$.}
Notice that 
codim$(\mathcal X_{t}/\mathbb{P}^N_{t})=2$. 
The following exact sequence follows from \cite[Theorem 5.1.1]{OSS} ;
\begin{equation*}
    0\to \mathcal{O}_{\mathbb{P}^N_{t}}\to 
    \mathcal{E}'_{t}\to \mathcal{I}_{\mathcal X_{t}}(l)\to 0,
\end{equation*}
where $\mathcal{E}'_{t}$ is the rank 2 bundle associated to 
$X_{t}$, and det$(\mathcal{N}_{\mathcal X_{t}/\mathbb{P}^N_{t}})=
\mathcal{O}_{\mathbb{P}^N_{t}}(l)\vert_{\mathcal X_t}$. 
We also know from \cite[Lemma 5.2.1]{OSS} that 
$\mathcal{E}'_{t}$ is split if and only if $X_{t}$ is a complete intersection. To this end, we apply \cite[Theorem 1]{KPR}. We obtain from Lemma ~\ref{k} that 
$H^i_*(\mathcal{I}_{X_{t}}(k))=0$ for $2\leq i\leq N-2$. Thus, 
$H^i_*(\mathcal{E}'_{t})=0$ for all $2\leq i\leq N-2$, since 
$H^i_*(\mathcal{O}_{\mathbb{P}^N_t})=0$ when $i$ is in the same range.\QEDB\par

\vspace{5pt}

 Proposition~\ref{prop.ci} below, which was inspired by conversations with Nori, shows that some of the ropes that appear in 
Theorem~\ref{theorem.embedding} (\textit{only} the ones satisfying (i) and (ii) below)
correspond to points that lie in  an irreducible component of the Hilbert scheme
whose general points correspond to smooth complete intersections. 
However, the general 
arguments below cannot determine 
if the general members of \textit{every} one--parameter smoothing of those ropes
are complete intersections (see Question~\ref{question.ci}). 
 
\begin{proposition}\label{prop.ci}
Let $X, Y, \varphi, \pi, n$ and $\mathcal E$ be as in Set-up~\ref{setup1}. 
Assume that 
\begin{enumerate}
 \item[(i)] $\pi=\pi_l \circ \cdots \circ \pi_1$ is the composition of simple 
cyclic covers $\pi_1, \dots, \pi_l$ such that, for each $1 \leq l' \leq l$, $\pi_{l'}$ is 
branched along  the pull back by $\pi_{l'} \circ \cdots \circ \pi_1$  of a divisor  
of $|\mathcal O_Y(n_{l'}\kappa_{l'})|$ ($n=n_1 \cdots n_l)$; 
\item[(ii)]
the  
unordered 
 multidegree of $Y$ is
$$\underline d_{\textrm{unord}}=(\kappa_1, \dots, \kappa_l, 
\beta_1, \dots, \beta_{r-l}).$$ 
\end{enumerate}
Furthermore, assume the hypotheses of Theorem~\ref{theorem.embedding} 
are satisfied. Then: 
 \begin{enumerate}
  \item A general member of 
  the algebraic formally semiuniversal deformation of $\varphi$ is 
  an embedding whose image is 
a complete intersection of unordered multidegree 
$$\underline d'_{\textrm{unord}}=(n_1\kappa_1, \dots, n_l\kappa_l,  \beta_1, 
\dots, \beta_{r-l}).$$ 
\item  If $\widetilde Y \hookrightarrow \mathbb P^N$ is an embedded rope  on $Y$ 
with conormal bundle $\mathcal E$ and $\Phi$ is a flat family of morphisms  satisfying 
(I), (II) and (III) of Theorem~\ref{birational}, then $\widetilde Y$ and, for any
$t \neq 0$, $\Phi_t(\mathcal X_t)$,  
correspond to points of an irreducible component  
    of the Hilbert scheme whose general point corresponds
    to a smooth, complete intersection subvariety 
of unordered multidegree $\underline d'$. 
 \end{enumerate}
\end{proposition}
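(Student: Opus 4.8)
The plan is to realize $X$, together with its polarization $L$ and the morphism $\varphi$, as a linear projection of a complete intersection in a weighted projective space, to deform the defining equations so that this projection becomes a closed immersion onto a complete intersection of multidegree $\underline d'$ in $\mathbb{P}^N$, and then to combine this with the openness and irreducibility statements already at hand. Using (ii) one writes $Y=V(g_1,\dots,g_l,h_1,\dots,h_{r-l})\subseteq\mathbb{P}^N$ with $\deg g_j=\kappa_j$ and $\deg h_j=\beta_j$, and using (i) one realizes $\pi$ as the iterated simple cyclic cover
\[
X\;\cong\;Z:=V\bigl(g_1,\dots,g_l,\,h_1,\dots,h_{r-l},\,w_1^{n_1}-f_1,\dots,w_l^{n_l}-f_l\bigr)\ \subseteq\ P:=\mathbb{P}(1^{N+1},\kappa_1,\dots,\kappa_l),
\]
where $w_j$ has weight $\kappa_j$ and $f_j\in H^0(\mathcal{O}_{\mathbb{P}^N}(n_j\kappa_j))$ cuts out the branch divisor of $\pi_j$. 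One checks that $Z$ is disjoint from the indeterminacy locus $\{x_0=\dots=x_N=0\}$ of the projection $P\dashrightarrow\mathbb{P}^N$ (which contains $\mathrm{Sing}\,P$), that this projection restricts on $Z$ to $\varphi$, and that $Z$ is quasi-smooth, hence smooth and isomorphic to $X$ since $X$ is; in particular $Z$ is a complete intersection in $P$ of weighted multidegree $(\kappa_1,\dots,\kappa_l,\beta_1,\dots,\beta_{r-l},n_1\kappa_1,\dots,n_l\kappa_l)$.

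\emph{A good deformation of $\varphi$.} Deform the defining equations of $Z$ along a general line in their weighted-degree spaces, arranging that each degree-$\kappa_j$ equation has nonzero $w_j$-coefficient; this produces a flat $1$-parameter family $Z_t\subseteq P$ with $Z_0=Z$ whose projection to $\mathbb{P}^N$ deforms $\varphi$, and $Z_t$ stays smooth and disjoint from $\{x_0=\dots=x_N=0\}$ for $t$ near $0$. For general such $t$, the degree-$\kappa_j$ equations present $w_1,\dots,w_l$ as the unique solution of a triangular affine-linear system over $\mathbb{P}^N$ with invertible diagonal, so $Z_t\to\mathbb{P}^N$ has reduced singleton fibers, hence is injective and unramified, i.e.\ a closed immersion onto a smooth subvariety $W\subseteq\mathbb{P}^N$ of dimension $m$; eliminating $w_1,\dots,w_l$ exhibits $W$ inside the common zero locus of $r$ forms of degrees $n_1\kappa_1,\dots,n_l\kappa_l,\beta_1,\dots,\beta_{r-l}$, and since $\deg W=L^m=n\prod_i d_i=\prod_j n_j\kappa_j\cdot\prod_j\beta_j$ equals the B\'ezout number of those forms, $W$ is (scheme-theoretically) their common zero locus, a smooth complete intersection of unordered multidegree $\underline d'$. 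Since $\varphi$ is unobstructed with an algebraic formally semiuniversal deformation (Lemma~\ref{unobs}), this family is classified by a morphism from a pointed curve to (a smooth, hence irreducible, representative $S$ of) the base of that deformation, non-constant because $\varphi$ has degree $n\ge 2$ while the generic member of the family is a closed immersion. Thus the locus $U\subseteq S$ of deformations of $\varphi$ that are closed immersions onto a complete intersection of multidegree $\underline d'$ is nonempty.

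\emph{Spreading out; proof of (1) and (2).} The locus $U'\subseteq S$ over which the deformed morphism is a closed immersion is open, and nonempty by Theorem~\ref{birational}(2), hence dense in the irreducible $S$; over $U'$ the images form a flat family, inducing $U'\to\mathrm{Hilb}(\mathbb{P}^N)$. A complete intersection of dimension $m=N-r\ge 3$ has split normal bundle with vanishing $H^1$, so is an unobstructed point of $\mathrm{Hilb}(\mathbb{P}^N)$, and the smooth complete intersections of multidegree $\underline d'$ form a dense open subset of a single irreducible component $\mathcal{H}$; in particular ``the image is a smooth complete intersection of multidegree $\underline d'$'' is an open condition on $U'$, nonempty by the previous paragraph, hence dense --- this is assertion (1), and it also shows that the irreducible set $U'$ maps entirely into the closed set $\mathcal{H}$. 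Now let $\widetilde Y\hookrightarrow\mathbb{P}^N$ be an embedded rope on $Y$ with conormal bundle $\mathcal{E}$ and $\Phi$ a family satisfying (I)--(III); semiuniversality provides $T\to S$ inducing $\Phi$, and for $0\ne t$ near $0$ the morphism $\Phi_t$ is a closed immersion, so $[\Phi_t(\mathcal{X}_t)]$ lies in the image of $U'$, hence in $\mathcal{H}$. Since $\operatorname{Im}\Phi\to T$ is flat with $(\operatorname{Im}\Phi)_0=\widetilde Y$ (condition (III) and \cite{GGP1}), the point $[\widetilde Y]=\lim_{t\to 0}[\Phi_t(\mathcal{X}_t)]$ lies in the closed set $\mathcal{H}$ as well; as the general point of $\mathcal{H}$ is a smooth complete intersection of unordered multidegree $\underline d'$, this gives assertion (2).

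\emph{Main obstacle.} The technical heart is the weighted model and the elimination step of the second paragraph: one must verify that $Z$ is quasi-smooth and isomorphic to $X$ as a polarized variety, that the projection is exactly $\varphi$, and --- the genuinely delicate point --- that eliminating the weighted coordinates $w_1,\dots,w_l$ from a general deformation produces $r$ forms of the prescribed degrees cutting out a scheme of pure codimension $r$ with no superfluous components, so that the image is a bona fide complete intersection of multidegree $\underline d'$ rather than merely being contained in one. The remaining ingredients --- openness of being a closed immersion and of being a complete intersection of a fixed multidegree, irreducibility of $S$ from Lemma~\ref{unobs}, and flatness of $\operatorname{Im}\Phi$ from \cite{GGP1} --- are then routine.
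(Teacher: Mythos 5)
Your argument is correct, and its second half (irreducibility of the semiuniversal base from Lemma~\ref{unobs}, openness of ``closed immersion'' and of ``complete intersection'' via \cite{Ser75}, the induced map to the Hilbert scheme, and passing to the flat limit $[\widetilde Y]$ inside the unique component $\mathcal H$) is essentially identical to the paper's. Where you differ is in how the key deformation is produced: the paper works downstairs, degenerating a hypersurface $H_1'$ of degree $nk$ to $n$ times $H_1$ inside $\PP^N$ and then recovering a family of \emph{morphisms} deforming $\varphi$ by base change and normalization (a step it delegates to the construction of Catanese--Lee \cite{CL}), treating only the simple cyclic case in detail and iterating for compositions; you work upstairs, writing the iterated cover as a weighted complete intersection $V(g_j,h_i,w_j^{n_j}-f_j)\subseteq\PP(1^{N+1},\kappa_1,\dots,\kappa_l)$ and deforming $g_j\mapsto g_j+tw_j$, so that eliminating the $w_j$ turns the projection into a closed immersion onto $V(h_i,\,g_j^{n_j}-(\pm t)^{n_j}f_j)$. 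These are two presentations of the same family (eliminating $w$ from $w^n=f$, $g+tw=0$ is exactly the degeneration of the degree-$n\kappa$ hypersurface to $n$ times the degree-$\kappa$ one), but your version handles the composed case in one stroke, produces the family of morphisms directly without the normalization step, and makes the complete-intersection image explicit; the price is the weighted-projective bookkeeping you flag as the main obstacle. That obstacle is in fact milder than you suggest: rather than the B\'ezout count (which requires knowing the eliminated locus has pure codimension $r$), note that for $t\neq 0$ the assignment $x\mapsto (x,-g_1(x)/t,\dots)$ (solving your triangular system) is a morphism $\PP^N\to\PP(1^{N+1},\kappa_1,\dots,\kappa_l)$ carrying $V(h_i,\,g_j^{n_j}-(\pm t)^{n_j}f_j)$ isomorphically onto $Z_t$ and inverse to the projection, so the image is scheme-theoretically that complete intersection and inherits smoothness from $Z_t$ with no degree argument needed.
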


\noindent\textit{Proof.}
We do in detail the proof when $\pi$ is simple cyclic, the general case follows from 
iterating the arguments used to prove the simple cyclic case. 

\smallskip
Thus, let $\pi$ be a simple cyclic cover branched along a (smooth) divisor  
of $|\mathcal O_Y(nk)|$, for some $k \in \mathbb Z$, $k > 0$. 
Recall that $Y$ is a complete intersection $H_1 \cap \cdots \cap H_r$ of multidegree
$$\underline d_{\textrm{unord}}=(k,  \beta_1, \dots, \beta_{r-1})$$ 
and let $k$ be the degree of $H_1$. 
 Let $Y'=H_1' \cap H_2 \cap \cdots \cap H_r$ be a 
 smooth complete intersection of  unordered 
 multidegree
 $$\underline d'_{\textrm{unord}}=(nk, k \beta_1, 
\dots, \beta_{r-1}),$$ 
where 
 $H_1'$ has degree $nk$. 
 By letting $H_1'$ degenerate to $n$ times $H_1$, we obtain 
 a smooth algebraic curve $S$, with a distinguished point $0 \in S$
and a flat family $\mathcal Y$
 of subschemes of $\PP^N$ 
over $S$, whose general member of $\mathcal Y$ is a smooth
 complete intersection of  unordered 
 multidegree $\underline d'$
and whose member at $0$ is a primitive multiple 
 structure $\widehat Y$ of multiplicity $n$, supported on $Y$.
 In fact, $H_1'$ can be chosen in such a way that, after base change, normalization and, if necessary,
 a linear automorphism of $Y$, 
 we obtain a 
 flat family 
 $\Psi$ of morphisms to $\PP^N$, over an algebraic curve $T'$ with 
 a distinguished point $0 \in T'$, such that $\Psi_0=\varphi$ 
 and $\Psi_t$ is 
 an embedding whose image is a smooth, complete intersection subvariety of 
 unordered multidegree $\underline d'$. 
 Then there is a point in the base $Z$ of 
 the algebraic formally semiuniversal deformation of $\varphi$
 (see Lemma~\ref{unobs}) 
 which corresponds to an embedding whose image is a smooth, 
 complete intersection variety.
Since being a complete intersection is an open property
 (see \cite{Ser75}), then there is a non--empty open set $U$ of 
 $Z$
 consisting of points that correspond to 
embeddings (see Theorem~\ref{theorem.embedding} (2)), whose images are smooth, 
complete intersection subvarieties. 
 This proves (1).

 \smallskip 
 Recall that $Z$ is irreducible (see Lemma~\ref{unobs}). 
 Hence there is a rational map $\rho$ from $Z$ to the Hilbert scheme. 
 Now $\textrm{Im}(\rho)$ is irreducible and is hence contained inside a 
 unique irreducible component $H$ of the Hilbert scheme. 
Any rope  $\widetilde{Y}$ as in the statement of (2) corresponds to a point 
in the closure of 
 $\textrm{Im}(\rho)$ and is hence contained in $H$. Also, for any $t \neq 0$, 
  any $\Phi_t(X_t)$ as in the statement of (2) 
  corresponds to a point in  $\textrm{Im}(\rho)$, which is 
therefore a point of $H$. By part (1), 
 $\textrm{Im}(\rho)$ contains at least one subvariety which is a complete intersection
 subvariety of multidegree $\underline d'$. 
 Since, by \cite{Ser75}, being a complete intersection is an open property 
 in the Hilbert scheme, a general point of $H$ corresponds to  
 a complete intersection
 subvariety of multidegree $\underline d'$. This proves (2).\QEDB

\begin{remark}
{\rm To prove 
Proposition~\ref{prop.ci} (1)  we use, among other things, 
an elementary construction from which we obtain the flat family 
$\Psi$. This construction is a particular case of the one in 
\cite{CL} (see \cite[Definition 2.3, Theorem 2.4 and Remark 2.5]{CL}).} 
M. Nori also showed us a similar example.
\end{remark}

\begin{question}
{\rm Let $\mathcal E$ be as in Proposition~\ref{prop.ci}. If $n > 2$, 
there is one irreducible component of the Hilbert containing points corresponding to two
different kind of multiple structures, namely,  ropes embedded in $\PP^N$, 
supported on $Y$ with conormal bundle $\mathcal E$ on the one hand, 
and complete intersection multiple structures obtained by intersecting 
multiple hypersurfaces
and smooth hypersurfaces on the other hand. In the case in which 
$\pi$ is simple cyclic, then the latter multiple structures are  
 primitive (and are like $\widehat Y$). Thus, a natural question to ask is how, 
 in the Hilbert scheme, the loci parameterizing each kind of multiple structure are related.}
\end{question}

\color{black}

\section{Deforming finite morphisms to construct small codimension subvarieties}\label{4}

\textcolor{black}{In this section we use the results proven in the previous sections to produce, in a systematic way,   {sub}varieties of projective space, {of} infinitely many degrees, for any given codimension. In particular, we produce infinitely many   {sub}varieties {of} small codimension {in $\mathbb{P}^N$}. In order to do so, we deform morphisms to $\mathbb{P}^N$, finite onto their image, to embeddings. More precisely, in this section we study the deformations of simple cyclic covers of complete intersections (in Sections~\ref{secz} and \ref{5} we extend this study to more general abelian covers and to dihedral covers).} We first describe our set-up, \textcolor{black}{that we will follow in this and in Sections~\ref{section.cyclic.covers.nci} and \ref{section.cyclic.covers.non.embeddings}.}

\begin{set-up}\label{subc}
In the situation of Set-up ~\ref{setup1}, \textcolor{black}{let $k \in  \mathbb{Z}_{>0}$ and} assume $\pi$ is simple 
cyclic\textcolor{black}{, branched along a smooth divisor in $|\mathcal O_Y(nk)|$.}
Consequently, 
\color{black}
\begin{equation*}
    \mathcal E= \mathcal O_Y(-k) \oplus \cdots \oplus \mathcal O_Y(-(n-1)k), 
\end{equation*}
and, therefore,  $k_i=ik$
\color{black}
for
all $i=1, \dots, n-1$. 
\end{set-up}
We now make \textcolor{black}{two} remarks. The first one \textcolor{black}{recalls that simple cyclic covers are subcanonical and tells what kind of variety $X$ is, depending on $Y$ and the cover}. The \textcolor{black}{second} one shows \textcolor{black}{in particular} that, \color{black} when $\varphi$ is the canonical morphism, the geometric genus of $X$ is bounded in terms of the dimension of $X$ and the degree of $\pi$. 

\begin{remark}\label{remark.cyclic.subcanonical}
Assume Set-up~\ref{subc} and let $L=\pi^*\mathcal{O}_Y(1)$.
Then $(X,L)$ is subcanonical. Indeed, by adjunction and 
the ramification formula (see e.g. \cite[Lemma I.17.1]{BHPV}), 
$K_X=\pi^*\mathcal{O}_Y(\delta+(n-1)k-N-1)$. In particular, 
 $(X,L)$ is  $s$--subcanonical if and only if 
 \begin{equation}\label{eq.subcan.simple.cyclic}
    N+1+s=\delta+(n-1)k. 
 \end{equation}
 \color{black}
 \textcolor{black}{From Remark~\ref{remark.subcanonical} we get the following facts:}
\begin{itemize}
    \item[$(1)$] $X$ is Fano variety if and only if $\delta+(n-1)k\leq N$ (then, $Y$ is also Fano). 
    In this case $(X,L)$ is a Fano polarized variety of index $-s$
   if and only if $N+1+s=\delta+(n-1)k$.
    \item[$(2)$] $X$ is a Calabi--Yau variety if and only if $\delta+(n-1)k=N+1$ 
    (in this case, $Y$ is Fano).
    \item[$(3)$] $X$ is a variety of general type if and only if $\delta+(n-1)k\geq N+2$. 
    The morphism $\varphi$ (respectively $(X,L)$)  is canonical \textcolor{black}{(respectively, the canonical polarization)} if and only if $\delta+(n-1)k=N+2$ 
        and $k\geq 2$ (resp. $\delta+(n-1)k=N+2$); in this case, $Y$ is Fano 
        (resp. $Y$ is Fano, unless $n=2$ and $k=1$, {in which case $Y$ is Calabi--Yau}).
\end{itemize}
\end{remark}

\begin{remark}\label{Nm}
In the situation of Set-up ~\ref{subc}, the following happens.
\begin{itemize}
    \item[$(1)$] If $(X,L)$ is $s$--subcanonical, then $m+1\leq N
    \leq 2m+s+1-(n-1)k \leq 2m+s-n+2$.
    \item[$(2)$] If $\varphi$ is $s$--subcanonical, then $m+1\leq N\leq 2m+s-2n+3$. \textcolor{black}{In particular, if $\varphi$ is a canonical morphism, then $p_g(X) \leq 2m-2n+4$.}
\end{itemize}
\end{remark}

\color{black}

\begin{remark}
Although in this sections we will use Theorem~\ref{theorem.embedding} (a) only to produce $s$-subcanonical morphisms, we note it can
\textcolor{black}{also} be used to produce morphisms induced by non--complete linear series, in the way shown in Examples~\ref{example.non.complete.series.embedding} and 
\ref{example.non.complete.series.birational}.
\end{remark}

\color{black}

\textcolor{black}{Thus, from now on in this section} we will only look at the cases in which 
$\varphi$ is $s$--subcanonical, in particular, 
it is induced by a complete linear series. 
Recall that in these cases $k_1\geq 2$, \textcolor{black}{i.e., $k \geq 2$}
(see Set-ups ~\ref{setup1} \textcolor{black}{and \ref{subc}} for notations). 
In this subsection we apply Theorem ~\ref{theorem.embedding} (a)
when $\pi$ is simple cyclic. As we will see, Theorem ~\ref{theorem.embedding} (a) produces in a systematic way, for any given dimension $m$, $m \geq 3$ and any given codimension $r$, subvarieties of infinitely many different degrees. In particular, it produces subvarieties of small given codimension $r$ and infinitely many different degrees.

\smallskip

First we study what invariants $m, n$ and $N$ in Set-up~\ref{subc} and $s$ in Definition~\ref{defsubcan} satisfy the hypothesis of   Theorem ~\ref{theorem.embedding} (a).

\begin{proposition}\label{lemmaemb}
\color{black}
In the situation of Set-up ~\ref{subc}, \color{black}
if 
the hypothesis of Theorem ~\ref{theorem.embedding} (a) 
holds and 
$\varphi$ is $s$--subcanonical, then,
\color{black}
\begin{equation}\label{eq.lemmaemb}
 m+n-1\leq N\leq 2(m+n\textcolor{black}{-1})+s-(n-1)(n+2) \textcolor{black}{+1}  \textcolor{black}{=2m -n(n-1) + s +1},
\end{equation}
so, in particular, 
\begin{equation}
    s\geq \textcolor{black}{n^2-m-2}.
\end{equation}

\color{black}
\smallskip
In particular, 
\begin{equation}\label{eq.lemmaemb.2}
   m+1 \leq N \leq 2m+s-1, 
\end{equation}
and, if $n \geq 3$, then 
\begin{equation}\label{eq.lemmaemb.3}
   m+2 \leq N \leq 2m+s-5. 
\end{equation}
Further, if $s=(n-1)(n+2)-(m+n)$, then $N=m+n-1$ and $\underline{d}=(2,4,\cdots\cdots,2(n-1))$.
\end{proposition}

\color{black}
\noindent\textit{Proof.} Assume 
the hypothesis of Theorem ~\ref{theorem.embedding} 
 (a) holds. Then, 
obviously $r=N-m\geq n-1$, which gives the lower bound of $N$ in 
\eqref{eq.lemmaemb}.
Under the assumption, the \textit{unordered} 
multidegree of $Y$ is the following; 
\begin{equation}\label{eq.lemmaemb.pr1}
\underline{d}_{\textrm{unord}}=
(k,2k,\cdots,(n-1)k,\beta_1,\beta_2,\cdots,
\beta_{N-m-n+1}),    
\end{equation}
 where each $\beta_i\geq 2$. 
Thus, $\delta\geq \frac{k(n-1)n}{2}
+2(N-m-n+1)$. 
Since $\delta+k(n-1)=N+s+1$, we get the following;
\begin{equation}\label{eq.lemmaemb.pr2}
    N+s+1\geq\frac{k(n-1)n}{2}+2(N-m-n+1)+
k(n-1).
\end{equation}
An elementary computation completes 
the proof of \textcolor{black}{\eqref{eq.lemmaemb},} since $k\geq 2$.
\textcolor{black}{If we set $n=2$ in \eqref{eq.lemmaemb} (respectively $n=3$), then \eqref{eq.lemmaemb} becomes \eqref{eq.lemmaemb.2} (respectively \eqref{eq.lemmaemb.3}). Consequently,   \eqref{eq.lemmaemb.2} and \eqref{eq.lemmaemb.3} are obvious consequences of \eqref{eq.lemmaemb}.} The \textcolor{black}{last} assertion {is} a consequence of \textcolor{black}{\eqref{eq.lemmaemb}, \eqref{eq.lemmaemb.pr1} and \eqref{eq.lemmaemb.pr2}}.\QEDB\par

\color{black} 

\smallskip

Now we show that, for all set of invariants satisfying 
\eqref{eq.lemmaemb}, there exist morphisms $\varphi$ to which Theorem~\ref{theorem.embedding} (a) can be applied, therefore 
producing infinitely many subvarieties in the range 
$3 \leq m  \leq N-1$:

\begin{theorem}\label{theorem.embedding.a}
Given any integers
$n, m, s$ and $N$ such that  $m  \geq 3$ 
and $n \geq 2$,  if  \eqref{eq.lemmaemb} holds,
then there 
exist  
smooth varieties $X'$ of dimension $m$ and 
$s$-subcanonical embeddings $\varphi':X' \longrightarrow \mathbb P^N$  such that 
\begin{enumerate}
    \item[(a)] the morphisms $\varphi'$ are 
    deformations of  morphisms $\varphi$, where 
$\varphi, m$ and $N$ are as in Set-up~\ref{subc} and $\varphi$ satisfies the hypothesis of Theorem~\ref{theorem.embedding} (a); 
\item[(b)] the subvarieties $\varphi'(X')$ are one-parameter deformations, as described in Theorem~\ref{theorem.embedding}, of  multiplicity $n$ rope subschemes.
\end{enumerate}
For any given integers
$n, m, N$ and $s$ satisfying   $m  \geq 3$, 
$n \geq 2$ and \eqref{eq.lemmaemb}, there are infinitely many non--isomorphic subvarieties $\varphi'(X')$ as above. 
\end{theorem}

\color{black}
\noindent\textit{Proof.} Let $m,n, s \in\mathbb{Z}$, with $m\geq 3$, $n\geq 2$ satisfying 
\eqref{eq.lemmaemb}.
\color{black}
For any integer $k$ such that 
\begin{equation*}
    2 \leq k \leq \frac{2(2(m+n)+s-N-1)}{(n-1)(n+2)},
\end{equation*}
(e.g., for $k=2$), 
\color{black}
then there are integers $\beta_1,\beta_2,\cdots,
\beta_{N-m-n+1}\geq 2$ satisfying the equation
$$\sum\beta_i+\frac{\textcolor{black}{k}n(n-1)}{\textcolor{black}{2}}+
\textcolor{black}{k}(n-1)=N+s+1.$$ For any such choices 
of $\beta_i$'s, let $Y$ be a complete intersection 
in $\mathbb{P}^N$ of multidegree 
$$\textcolor{black}{\underline{d}_{\textrm{unord}}=(k,2k,\cdots\cdots,(n-1)k,\beta_1,\cdots,\beta_{N-m-n+1}).}$$
Let $\pi:X\to Y$ 
be a simple cyclic cover branched along a 
smooth member of $|\mathcal{O}_Y(\textcolor{black}{k}n)|$. 
Then the corresponding morphism
$\varphi$ is $s$--subcanonical 
and satisfies the hypothesis of 
Theorem ~\ref{theorem.embedding} (a). Thus, 
a general deformation of $\varphi$ is an 
{embedding}.\QEDB

\begin{remark}\label{remark.moduli}
\begin{enumerate}
    \item For $m, N, n$ fixed with $m \geq 3, n \geq 2$, $N \geq m + n-1$, there exist smooth varieties $X'$ of dimension $m$ as in Theorem~\ref{theorem.embedding.a} belonging to infinitely many
   different moduli spaces. This is because we can choose the $d_i$'s and/or $k$  arbitrarily large (then $s$ also grows), so $K_X^m$ also grows arbitrarily. These moduli spaces possess reduced and irreducible components with a locally closed locus that parametrizes smooth varieties with an  $s$-subcanonical morphism, of degree $n$ and finite onto its image, whereas the general points of the components correspond to smooth varieties with an $s$-subcanonical morphism which is an embedding. In Section~\ref{section.moduli} we will describe more specifically this phenomenon in the case of the canonical map, i.e., if $s=1$ (see Corollary~\ref{def of can morphisms}).  
   
   \item For $m, n$ fixed with $m \geq 3, n \geq 2$ or, simply, for $m$ fixed with $m \geq 3$, if we let $s$  grow arbitrarily, then $N$, and, hence, $p_g$, can be chosen arbitrarily large. Therefore there exist smooth varieties $X'$ of dimension $m$ as in Theorem~\ref{theorem.embedding.a} with $p_g(X')$ arbitrarily large.
\end{enumerate}
\end{remark}

\vspace{5pt} 

Theorem~\ref{theorem.embedding.a} yields this corollary:

\begin{corollary}\label{cor.embedding.a}
Given any integers $m$ and $N$ such that $3 \leq m \leq N-1$, 
there 
exist 
smooth varieties $X'$ of dimension $m$ and embeddings $\varphi':X' \longrightarrow \mathbb P^N$, with  $\varphi'(X')$  having infinitely many different degrees, such that 
\begin{enumerate}
    \item[(a)] the morphisms $\varphi'$ are
    deformations of  morphisms $\varphi$, where 
$\varphi, m$ and $N$ are as in Set-up~\ref{subc} and $\varphi$ satisfies the hypothesis of Theorem~\ref{theorem.embedding} (a); 
\item[(b)] the subvarieties $\varphi'(X')$ are one-parameter deformations, as described in Theorem~\ref{theorem.embedding}, of  rope subschemes.
\end{enumerate}
 More precisely:
 \begin{enumerate}
 \item If 
\begin{equation}\label{eq.embedding.4}
     s \geq N-2m +1,
 \end{equation} then 
 there exist $s$-subcanonical embeddings $\varphi'$ as above and, 
for any such $s$, there are infinitely many non--isomorphic subvarieties $\varphi'(X')$ as above. 
     \item If 
     \begin{equation}\label{eq.embedding.5}
    m \leq N-2 \ \ \mathrm{ and} \ \  s \geq N-2m +5,
      \end{equation}
      then the above ropes can be chosen to be of multiplicity greater than 2, and hence not a complete intersection.
 \end{enumerate}
\end{corollary}

\noindent\textit{Proof.}
Under our assumption that $m \leq N-1$,   \eqref{eq.lemmaemb.2} is equivalent to \eqref{eq.embedding.4}.
As we saw in the proof of Proposition~\ref{lemmaemb}, if we set $n=2$ in \eqref{eq.lemmaemb}, then we obtain \eqref{eq.lemmaemb.2}, so setting 
$n=2$ in Theorem~\ref{theorem.embedding.a}, we obtain morphisms $\varphi'$ as required in (1). Furthermore, for any $s$ satisfying \eqref{eq.embedding.4}, let $\delta$ satisfy
\eqref{eq.subcan.simple.cyclic}. Then, as $s$ goes to infinity, we have a sequence of values of $\delta$ that gives rise to a sequence of $s$-subcanonical morphisms $\varphi$ as in  Theorem~\ref{theorem.embedding.a} such that $L^m$ goes to infinity, for 
$L=\varphi^*\mathcal O_{\mathbb P^N}(1)$. Finally, \eqref{eq.lemmaemb.3} is equivalent to \eqref{eq.embedding.5}. If we set  $n=3$ in \eqref{eq.lemmaemb}, then we obtain \eqref{eq.lemmaemb.3}, so setting 
$n=3$ in Theorem~\ref{theorem.embedding.a}, we obtain morphisms $\varphi'$ as required in (2), since ropes of multiplicity $n \geq 3$ are locally not a complete intersection.\QEDB

\vspace{5pt}

\textcolor{black}{To give a taste of the power of Theorem~\ref{theorem.embedding.a} and Corollary~\ref{cor.embedding.a}, we construct} some explicit examples of small codimension subvarieties of Corollary~\ref{cor.embedding.a} (2). We will  consider only examples in which $n \geq 3$ so all the subvarieties in them are one-parameter deformations of locally non--complete intersections and, therefore, non--complete intersections, embedded ropes.

\begin{example}
For fixed $m \geq 3$ and $n \geq 3$, we look at the subvarieties  $\varphi'(X')$ with smallest possible $s$ (that is, according to 
Proposition~\ref{lemmaemb}, 
 $s=\textcolor{black}{n^2-m-2}$), constructed in Theorem~\ref{theorem.embedding.a}. By Proposition~\ref{lemmaemb}, these subvarieties  have codimension $r=n-1$ and $k=2$.
Due to Corollary ~\ref{kpr}, \textcolor{black}{the subvarieties $\varphi'(X')$ corresponding to} the rows shaded with light blue color in the table below are complete intersections. For the white rows, the general deformation 
of $\varphi$ is an embedding whose image is a complete intersection, due to 
Proposition~\ref{prop.ci}. \textcolor{black}{A priori} it is not known if the same is true for special deformations (see Question~\ref{question.ci}), \textcolor{black}{although, they should also be complete intersections according to Hartshorne's conjecture. Further, these  are the lowest codimension examples of subvarieties which are smoothings of non--complete intersections ropes.} 
The number of different invariants of such varieties $\varphi'(X')$
is infinite, so we will just list a few of them in the table below, 
precisely those of codimension $2$ and $3$ and $-2
\leq s \leq 2$. 

\vspace{5pt}

\begin{center}
 \begin{tabular}{c|c|c|c|c|c|c|c|c} 
 \hline
 $m$ & $n$ & $k$ & $N$ & $s$ & $\underline{d}$ & \textcolor{black}{$\mathrm{deg}(\varphi'(X'))$} & \textcolor{black}{$K_{X'}^m$} 
 & \textcolor{black}{$p_g(X')$} \\ 
 \hline\hline
 \rowcolor{LightCyan}
 $9$ & $3$ & $2$ & $11$ & $-2$ & $(2,4)$ & \textcolor{black}{$24$}  & $-24\cdot 2^9$ 
 & $0$\\
 \hline
 $16$ & $4$ & $2$ & $19$ & $-2$ & $(2,4,6)$ &\textcolor{black}{$192$} 
 & $192\cdot 2^{16}$ & $0$\\
 \hline
 \rowcolor{LightCyan}
 $8$ & $3$ & $2$ & $10$ & $-1$ & $(2,4)$&\textcolor{black}{$24$} & $24$ & $0$\\
 \hline
  $15$ & $4$ & $2$ & $18$ & $-1$ & $(2,4,6)$&\textcolor{black}{$192$} & $-192$ & $0$\\
 \hline
 \rowcolor{LightCyan}
 $7$ & $3$ & $2$ & $9$ & $0$ & $(2,4)$&\textcolor{black}{$24$} & $0$ & $1$\\
 \hline
 $14$ & $4$ & $2$ & $17$ & $0$ & $(2,4,6)$&\textcolor{black}{$192$} & $0$ & $1$\\
 \hline
 \rowcolor{LightCyan}
 $6$ & $3$ & $2$ & $8$ & $1$ & $(2,4)$&\textcolor{black}{$24$} & $24$ & $9$\\
 \hline
 $13$ & $4$ & $2$ & $16$ & $1$ & $(2,4,6)$&\textcolor{black}{$192$} & $192$ & $17$\\
 \hline
 \rowcolor{LightCyan}
  $5$ & $3$ & $2$ & $7$ & $2$ & $(2,4)$&\textcolor{black}{$24$} & $24\cdot 2^5$ &  \textcolor{black}{$37$} \\ 
   \hline
 \textcolor{black}{$12$} & \textcolor{black}{$4$} & \textcolor{black}{$2$} & \textcolor{black}{$15$} & \textcolor{black}{$2$} & \textcolor{black}{$(2,4,6)$}&\textcolor{black}{$192$} & \textcolor{black}{$192 \cdot 2^{12}$} & \textcolor{black}{$137$}\\
 \hline
 \end{tabular}
 \end{center}

 \end{example}

 \color{black}
 \begin{example}
Theorem~\ref{theorem.embedding.a} allows also the construction of smooth subvarieties near the boundary of, but inside, the range of Hartshorne's conjecture, like the very small sample of the smooth subvarieties   in the range $r=(1/3)N-1$, displayed in Table \ref{t01} of the introduction, which are  obtained by deforming $\varphi$, which factors through a simple cyclic cover $\pi$ of degree $n$ of  $Y$, when $\pi$ is  branched along a smooth divisor in $|\mathcal O_Y(2n)|$. By Proposition~\ref{prop.ci}, the image of the general deformation of  $\varphi$  is a complete intersection, although we do not know whether the same is true for  some special deformations of $\varphi$ (see Question~\ref{question.ci}).
 \end{example}

\section{\textcolor{black}{Deforming finite morphisms to construct smooth non--complete intersection subvarieties}}\label{section.cyclic.covers.nci}

\color{black}
\textcolor{black}{In this section we construct smooth, non--complete intersection subvarieties of projective space, by applying Theorem ~\ref{theorem.embedding} (b)
when $\pi$ is a simple cyclic cover.} As we will see, Theorem ~\ref{theorem.embedding} (b) produces in a systematic way, for any given dimension $m$, $m \geq 3$ and any given codimension $r \geq m+1$, smooth subvarieties of infinitely many different degrees. 
Most importantly, for any $m$ and $r$ such that $r \geq m+1$, Theorem ~\ref{theorem.embedding} (b) produces smooth, non--complete intersection subvarieties, embedded by complete linear series. 
First we study what invariants $m, n$ and $N$ in Set-up~\ref{subc} and $s$ in Definition~\ref{defsubcan} satisfy the hypothesis of   Theorem ~\ref{theorem.embedding} (b).  

\smallskip

\color{black}

Although, as already remarked, our main interest are morphisms induced by complete linear series, we give first an example of how Theorem~\ref{theorem.embedding} (b) yields embeddings induced by non--complete linear series:

\begin{example}\label{example.non.complete.series.embedding}
Set $k=1$, and assume $N\leq \frac{nm+s}{n-1}$. Let $Y$ be a complete intersection of a 
hypersurface of degree $\alpha=nm+N(1-n)+2+s$ and $N-m-1$ hypersurfaces of degree $n$. 
By assumption, $\alpha\geq 2$. If one wants to write the multidegree of $Y$ that is consistent with the convention of Set-up ~\ref{setup1}, it will be
    \[ \underline{d} =
  \begin{cases}
    (\overbrace{n,\cdots\cdots,n}^\text{$N-m-1$}, \alpha); 
    & \textrm{ if }N\leq \frac{nm+2+s-n}{n-1},   \\
    (\alpha,\underbrace{n,\cdots\cdots,n}_\text{$N-m-1$}); & \textrm{ if }  N> \frac{nm+2+s-n}{n-1}. 
  \end{cases}\]

Recall that  $\pi: X \to Y$ is  a simple cyclic cover of $Y$ 
branched along a smooth divisor in $|\mathcal{O}_Y(n)|$ and $\varphi$ is  the corresponding morphism 
(which is induced by an incomplete linear series). \par
\smallskip

\noindent   If one of the following conditions hold;
    \begin{equation}
        2m+n-1\leq N\leq \textrm{min}
        \bigg\{\frac{nm+2+s}{n-1}-1,2m+s+2-n\bigg\} 
        \textrm{ or }
    \end{equation}
    \begin{equation}
        2m+n\leq N\leq 2m+s+2-n,
    \end{equation}
    then, since Theorem ~\ref{theorem.embedding}  (b) 
    applies, a general deformation of $\varphi$ is an 
    embedding.
\end{example}

From now on, in this section we look only at morphisms induced by complete linear series. 

\color{black}

\begin{proposition}\label{prop.emb.b}
\color{black}
In the situation of Set-up ~\ref{subc}, \color{black}
if 
the hypothesis of Theorem~\ref{theorem.embedding} (b) 
holds and 
$\varphi$ is $s$--subcanonical, then,
\color{black}
\begin{equation}\label{eq.lemmaemb.b}
 2m+n-1\leq N\leq 2(2m+n-1)-2(n-1)(m+n)+s+1,
\end{equation}
\textcolor{black}{so, in particular, 
\begin{equation}
s\geq 2m(n-2)+n(2n-3).    
\end{equation}
In particular, 
\begin{equation}\label{eq.lemmaemb.6}
   2m+1 \leq N \leq 2m+s-1. 
\end{equation}
and, if $n \geq 3$, then 
\begin{equation}\label{eq.lemmaemb.6b}
   2m+2 \leq N \leq s-7. 
\end{equation}}
Further, if $s=2m(n-2)+n(2n-3)$, then $N=2m+n-1$, $k=2$, and $Y$ has multidegree $$(\underbrace{2(n-1),\cdots\cdots,2(n-1)}_\text{$m+n-1$}).$$
\end{proposition}

\color{black}

\noindent\textit{Proof.} We just prove \eqref{eq.lemmaemb.b}, the other assertions will easily follow from it. Assume the hypothesis of Theorem ~\ref{theorem.embedding} (b) holds. Then, $2r\geq N+n-1$ and consequently $N\geq 2m+n-1$ since $r=N-m$. Now, under the assumption, $\delta$ will be least if the \textit{unordered} multidegree is the following;
$$\underline{d}_{\textrm{unord}}=(\underbrace{k(n-1),
\cdots\cdots,k(n-1)}_\text{$m+n-1$},\underbrace{2,\cdots\cdots,2}_\text{$N-2m-n+1$}\hskip -.1cm),$$
consequently, $\delta\geq k(n-1)(m+n-1)+2(N-2m-n+1)$. 
Since $\delta+k(n-1)=N+s+1$, we get,
$$N+s+1\geq k(n-1)(m+n)+2(N-2m-n+1).$$
Since $k\geq 2$, \eqref{eq.lemmaemb.b} follows.\QEDB\par

\vspace{5pt}

\color{black}
Now we see that, for all set of invariants satisfying 
\eqref{eq.lemmaemb.b}, there exist morphisms $\varphi$ to which Theorem~\ref{theorem.embedding} (b) can be applied, therefore 
producing infinitely many subvarieties in the range 
$3 \leq m  < \frac{N}{2}$.

\begin{theorem}\label{theorem.embedding.b}
Given any integers
$n, m, s$ and $N$ such that  $m  \geq 3$ 
and $n \geq 2$,  if \eqref{eq.lemmaemb.b} holds,
then there 
exist 
smooth varieties $X'$ of dimension $m$ and 
$s$-subcanonical embeddings $\varphi':X' \longrightarrow \mathbb P^N$  such that 
\begin{enumerate}
    \item[(a)] the morphisms $\varphi'$ are  
    deformations of  morphisms $\varphi$, where 
$\varphi, m$ and $N$ are as in Set-up~\ref{subc} and $\varphi$ satisfies the hypothesis of Theorem~\ref{theorem.embedding} (b); 
\item[(b)] the subvarieties $\varphi'(X')$ are one-parameter deformations, as described in Theorem~\ref{theorem.embedding}, of  multiplicity $n$ rope subschemes.
\end{enumerate}
For any given integers
$n, m, N$ and $s$ satisfying   $m  \geq 3$, 
$n \geq 2$ and \eqref{eq.lemmaemb.b}, there are infinitely many non--isomorphic subvarieties $\varphi'(X')$ as above. 
\end{theorem}

\noindent\textit{Proof.}
Let $m, n, s$ be integers satisfying $m\geq 3$, $n\geq 2$ and \eqref{eq.lemmaemb.b}. For any integer $k$ such that 
 \begin{equation*}
2 \leq k  \leq  \frac{2(2m+n)+s-N-1}{(n-1)(m+n)}, 
 \end{equation*}
 (e.g., $k=2$), 
 \color{black}
there exist integers $\beta_1,\beta_2,
\cdots,\beta_{N-2m-n+1}\geq 2$, and $\alpha_1,\cdots,\alpha_{m+n-1}\geq 
\textcolor{black}{k}(n-1)$ satisfying the  equation
$$\sum\beta_i+\sum\alpha_j+\textcolor{black}{k}(n-1)
=N+s+1.$$ For any such choices of $\alpha_i$'s and $\beta_j$'s, let $Y$ be a complete intersection in $\mathbb{P}^N$ of multidegree $$\underline{d}_{\textrm{unord}}=(\alpha_1,\cdots\cdots,\alpha_{m+n-1},\beta_1,\cdots,\beta_{N-2m-n+1}).$$
Let $\pi:X\to Y$ be a simple cyclic cover branched 
along a smooth member of $|\mathcal{O}_Y(\textcolor{black}{k}n)|$. 
Then $\pi$ is an $s$--subcanonical cover of 
degree $n$ satisfying the hypothesis of 
Theorem ~\ref{theorem.embedding}  (b). 
Thus, a general deformation of $\varphi$ is an {embedding}.\QEDB

\vspace{5pt}

We omit the proof of the following which is analogous to the proof of Corollary~\ref{cor.embedding.a}.

\begin{corollary}\label{cor.embedding.b}
Given any integers $m$ and $N$ such that 
$3 \leq m  < \frac{N}{2}$
there 
exist 
smooth varieties $X'$ of dimension $m$ and embeddings $\varphi':X' \longrightarrow \mathbb P^N$, with  $\varphi'(X')$  having infinitely many different degrees, such that 
\begin{enumerate}
    \item[(a)] the morphisms $\varphi'$ are 
    deformations of  morphisms $\varphi$, where 
$\varphi, m$ and $N$ are as in Set-up~\ref{subc} and $\varphi$ satisfies the hypothesis of Theorem~\ref{theorem.embedding} (b); 
\item[(b)] the subvarieties $\varphi'(X')$ are one-parameter deformations, as described in Theorem~\ref{theorem.embedding}, of  rope subschemes.
\end{enumerate}
 More precisely:
 \begin{enumerate}
 \item If 
\begin{equation}\label{eq.embedding.7}
     s \geq N-2m +1,
 \end{equation} then 
 there exist $s$-subcanonical embeddings $\varphi'$ as above and, 
for any such $s$, there are infinitely many non--isomorphic subvarieties $\varphi'(X')$ as above. 
     \item If 
     \begin{equation}\label{eq.embedding.8}
    m \leq \frac{N}{2}-1 \ \ \mathrm{ and} \ \  s \geq N+7,
      \end{equation}
      then the above rope subschemes can be chosen not to be   complete intersections.
 \end{enumerate}
\end{corollary}

\color{black} Next thing we want to know is which ones among the 
subvarities $\varphi'(X')$ of Theorem~\ref{theorem.embedding.b} are non--
complete intersections. \textcolor{black}{As we will see in
Theorem \ref{real.main0.5}, for each codimension in the range of Theorem~\ref{theorem.embedding.b}, there will be infinitely many of them. Now,    in the next example} we {start} displaying invariants of the lowest degree, non--complete intersection threefolds obtained by deforming double, triple and quadruple simple cyclic covers. 

\begin{example}\label{emb.more}
There exist $s$-subcanonical, non--complete intersection, smooth threefolds $\varphi'(X')$ in $\mathbb P^N$, embedded by complete linear series, where $\varphi'$ is a deformation of $\varphi$, and $\varphi$, $n$ and $k$ are as in Set-up~\ref{subc}, with the following invariants. 
\medskip

\begin{center}
\color{black}
 \begin{tabular}{c|c|c|c|c|c} 
 \hline
  $n$ & $k$ & $N$ & $s$ & $\underline{d}$ & 
 \color{black} deg$(\varphi'(X'))$\\
 \hline\hline
   \color{black}  \color{black} $2$ & \color{black} $2$ & \color{black} $7$ &\color{black}  $6$ & \color{black} $(3,3,3,3)$ & \color{black} $162$\\
 \hline
  \color{black}  \color{black} $2$ & \color{black} $2$ & \color{black} $7$ & \color{black} $7$ & \color{black} $(3,3,3,4)$ & \color{black} $216$\\
 \hline
 \color{black}
   $3$ & $2$ & $8$ & $15$ & $(4,4,4,4,4)$ & $3072$\\
 \hline
   $3$ & $2$ & $8$ & $16$ & $(4,4,4,4,5)$ & $3840$\\
 \hline
   $4$ & $2$ & $9$ & $32$ & $(6,6,6,6,6,6)$ & $186624$\\
 \hline
  $4$ & $2$ & $9$ & $33$ & $(6,6,6,6,6,7)$ & $217728$\\
 \hline 
 \end{tabular}
 \end{center}
 \medskip
 
\noindent  The subvarieties $\varphi'(X')$ are not complete intersections because of Proposition ~\ref{nci}. 
\end{example}

\par
\medskip

\color{black} 
In the next two theorems,  for any pair $(m,N)$ such that $3 \leq m < {N}/{2}$, 
we construct non--complete intersection, $m$--dimensional smooth subvarieties 
of $\mathbb P^N$ of infinitely many different degrees by 
deforming suitable simple cyclic covers of suitable complete 
intersection subvarieties. 
\color{black}
In particular, 
both theorems provide  subvarieties of dimension $m$ in $\mathbb P^N$ such that
the ratio $m/N$ goes to ${1}/{2}$ from below as $m$ approaches infinity.
The subvarieties constructed in Theorem~\ref{real.main0.5} and the subvarieties constructed in Theorem~\ref{main0.5} are different since, 
for instance, their degrees are different.

\color{black}

\begin{theorem}\label{real.main0.5}
Let $m$ and $N$ be any integers such that $3 \leq m  < \frac{N}{2}$ and let $p$ be any odd prime integer. 
\begin{enumerate}
    \item  There exist smooth, non--complete intersection subvarieties $X'_m$, of dimension $m$ and degree $2p^{N-m}$, embedded in $\mathbb P^N$ by a complete linear series.
   \item For any  $m, N, p$ as above, 
 there are subvarieties $X'$ as in (1) which are 
 $s$--subcanonical, where $s=(p-1)N-pm+k-1$, for any
 integer $k$   such that  $2 \leq k < p$.
\end{enumerate}
\end{theorem}

\noindent\textit{Proof.}
Let $Y$ be a complete intersection of multidegree 
$$\underline{d}=
(\underbrace{p,. \dots . ,p}_\text{$N-m$})$$ and 
let $i: Y \hookrightarrow \mathbb P^N$ be its embedding 
into $\mathbb P^N$. 
Let $k$ be an integer such that $2 \leq k < p$
and let 
$\pi: X \longrightarrow Y$ be a double cover of $Y$
branched along a smooth member 
of $|\mathcal O_Y(2k)|$. Then 
the hypothesis of Theorem~\ref{theorem.embedding}  (b)
are satisfied,
so $i \circ \pi$ can be deformed to an embedding. 
Let $X'$ the embedded variety. The subvariety $X'$ has degree 
$2p^{N-m}$ and 
\begin{equation*}
\omega_X=\mathcal O_X(p(N-m)+k-N-1). 
    \end{equation*}
If $X'$ were a complete intersection, of multidegree 
$(d_1',d_2',\dots, d_{N-m}')$, then Proposition~\ref{nci} would imply
\begin{equation*}
\sum d_i'= p(N-m)+k, \ \textrm{and} \ 
\prod d_i'= 2p^{N-m}
\end{equation*}
The second equality implies either $d_1'=2$, $d_2'=\cdots = d_{N-m-1}'=p$, $d_{N-m}'=p^2$ or 
$d_1'=\cdots = d_{N-m-1}'=p$, $d_{N-m}'=2p$. 
In the first case, $\sum d_i'= p(N-m)+k$ would be equivalent to $p(p-2)=k-2$, so either $k=2$ and $p=2$ or 
$k >2$ and $p$ divides $k-2$ and both contradict our 
hypothesis. 
In the second case, $\sum d_i'= p(N-m)+k$ would be equivalent to $p=k$, which again contradicts our 
hypothesis. Then $X'$ is not a complete intersection. 
The claim about complete linear series follows from 
Remark~\ref{cls}. \QEDB

\begin{theorem}\label{main0.5}
Let $m$ and $N$ be any integers such that $3 \leq m < \frac{N-1}{2}$. 
\begin{enumerate}
    \item  There exist smooth, non--complete intersection subvarieties $X_m'$, of dimension $m$ and degree 
   $(N-2m+1)(2(N-2m))^{N-m}$,
    embedded in $\mathbb P^N$ by a complete linear series.
   \item A subvariety $X'$ as in (1) is
 $s$--subcanonical, where $s=2(N-m+1)(N-2m)-N-1$.
\end{enumerate}
\end{theorem}

\color{black}

\noindent\textit{Proof.} Fix an integer $m\geq 3$. 
\textcolor{black}{Let $n=N-2m+1$. Note that $n \geq 3$.}
Let $Y_m$ be a complete intersection of multidegree 
$$\underline{d}=
(\underbrace{2(n-1),\cdots\cdots,2(n-1)}_\text{$m+n-1$})$$ 
inside $\mathbb{P}^{\textcolor{black}{N}}$, let 
$\pi_m:X_m\to Y_m$ be an $n$ \textcolor{black}{simple} cyclic \textcolor{black}{cover} branched 
along a smooth member of $|\mathcal{O}_Y(2n)|$ \textcolor{black}{and let 
$i_m:Y_m\hookrightarrow\mathbb{P}^{\textcolor{black}{N}}$ be the embedding
of $Y_m$ in $\mathbb P^N$}. 
We know by Theorem ~\ref{theorem.embedding}  (b) that 
$i_m\circ\pi_m$ deforms to an embedding. Let the embedded variety be $X_m'$. Assume $X_m'$ is a complete intersection of multidegree $(d_1',d_2',\cdots,d_{m+n-1}')$. Then, by Proposition ~\ref{nci}, we know that $$\sum d_i'=2(n-1)(m+n)
\textrm{, and }\prod d_i'=n((2(n-1))^{m+n-1}.$$ 
By arithmetic mean--geometric mean inequality, we know that $$\left(\frac{\sum d_i'}{m+n-1}\right)^{m+n-1}\geq \prod d_i'$$ 
$$\implies\left(\dfrac{m+n}{m+n-1}\right)^{m+n-1}=
\left(1+\dfrac{1}{m+n-1}\right)^{m+n-1}\geq n,$$
which is a contradiction since 
$(1+\frac{1}{m+n-1})^{m+n-1}$ is an increasing function 
of $m+n$ and the limit at infinity is $e$, but $n\geq 3$. 
The claim about complete linear series follows from Remark ~\ref{cls}. \QEDB\par

\begin{remark}\label{remark.moduli2} Arguing as in Remark~\ref{remark.moduli} we conclude that:
\begin{enumerate}
    \item Theorem~\ref{theorem.embedding.b} implies the existence, for fixed $m, N, n$ with $m \geq 3, n \geq 2, N \geq 2m+n-1$,  of infinitely many different moduli spaces
with the properties of the ones of Remark~\ref{remark.moduli}.
\item For $m, n$ fixed with $m \geq 3, n \geq 2$ or, simply, for $m$ fixed with $m \geq 3$, there exist smooth varieties of dimension $m$ as in Theorem~\ref{theorem.embedding.b} with $p_g$ arbitrarily large.
\item Theorem~\ref{real.main0.5} implies, for fixed $m, N$ with $3 \leq m < N/2$, the existence of non--complete intersection subvarieties of dimension $m$ in $\mathbb P^N$ belonging to infinitely many moduli spaces with the properties of the ones of Remark~\ref{remark.moduli}.
\item For $m$ fixed with $m \geq 3$  there exist smooth non--complete intersection subvarieties of dimension $m$ as in Theorem~\ref{real.main0.5} and Theorem~\ref{main0.5} with $p_g$ arbitrarily large.
\end{enumerate}
\end{remark}

\section{\textcolor{black}{Deforming finite morphisms
to non--embeddings }}\label{section.cyclic.covers.non.embeddings}

\subsection{Varieties with birational morphisms.}\label{birsubc} Now we study the cases for which Theorem ~\ref{birational} is applicable \textcolor{black}{to simple cyclic covers $\pi$}. Recall that, in this case $\varphi$ \textcolor{black}{can be deformed}  to a {birational} morphism \textcolor{black}{$\varphi'$}, which a priori, is not an embedding. 
\color{black}
We will show that these birational morphisms $\varphi': X' \longrightarrow \mathbb P^N$ exist for any $N$ and for varieties $X'$ of any dimension $m$, $3 \leq m \leq N-1$.  Some of these morphisms $\varphi'$ are strictly birational. \color{black}
Although we will mostly be interested in producing birational subcanonical morphisms, we start by showing an example of birational morphisms induced by  a non--complete linear series:  

\begin{example}\label{example.non.complete.series.birational}
Set $k=1$, and assume $N\leq \frac{nm+s}{n-1}$. 
Assume the notation of Example~\ref{example.non.complete.series.embedding} regarding $X, Y$  and $\pi$. 
 Furthermore, assume that one of the following conditions hold; 
    \begin{equation}\label{ex1}
        \floor{\frac{n}{2}}+m\leq N\leq \textrm{min}\bigg\{\frac{nm+2+s}{n-1}-1,2m+s+2-n\bigg\}\textrm{, or, } 
    \end{equation}
    \begin{equation}\label{ex1'}
        \floor{\frac{n}{2}}+m+1\leq N\leq 2m+s+2-n.
    \end{equation}
    Then the general deformation of 
$\varphi$  is a morphism induced by a non--complete linear series, 
finite and 
    birational 
    onto its image. Indeed, ~\eqref{ex1} guarantees that $\alpha\geq n-1$ and ~\eqref{ex1'} 
    guarantees that $N-m-1\geq\floor{n/2}$ so that Theorem ~\ref{birational}  applies.
\end{example}

In the remaining of this subsection, we focus in producing birational morphisms induced by complete linear series. We start with a result that proves some numerical inequalities.

\color{black}

\begin{proposition}\label{lemma.birational}
In the situation of Set-up ~\ref{subc}, assume $\varphi$ is $s$--subcanonical. If the hypothesis of Theorem ~\ref{birational}  holds, then,
\begin{equation}\label{eq.birational}
 m+\floor{n/2}\leq N\leq 2(m+\floor{n/2})-2(n-1)(\floor{n/2}+1)+s+1  \textcolor{black}{
 = 2m-2((n+2)\floor{n/2} -n) + s + 3}
\end{equation}
\textcolor{black}{and, consequently,} $$s\geq
\textcolor{black}{(2n+5)\floor{n/2} -2n-m-3.}$$
\color{black}
In particular, 
\begin{equation}\label{eq.birational.1}
    m+1 \leq N \leq 2m + s -1.
\end{equation}
\end{proposition}

\color{black}
\noindent\textit{Proof.} Assume the hypothesis of Theorem ~\ref{birational}  holds. Then $r=N-m\geq\floor{n/2}$ and that gives the lower bound.\par 
Now, under the assumption, $\delta$ will be least if the \textit{unordered} multidegree is the following;
$$\underline{d}_{\textrm{unord}}=(\underbrace{k(n-1),\cdots\cdots,k(n-1)}_\text{$\floor{n/2}$},\beta_1,\cdots\cdots,\beta_{N-m-\floor{n/2}}),$$ where $\beta_i\geq 2$ for all $i$. Since $\delta+k(n-1)=N+s+1$, we obtain;
$$N+s+1\geq \floor{n/2}k(n-1)+2(N-m-\floor{n/2})+k(n-1).$$
An elementary computation completes the proof since $k\geq 2$.\QEDB\par

\color{black}

\medskip

\medskip

Now we see that, for all set of invariants satisfying 
\eqref{eq.birational} and 
\eqref{eq.birational.1}, there exist morphisms $\varphi$ to which Theorem~\ref{birational}  can be applied, therefore 
producing infinitely many birational morphisms  in the ranges \eqref{eq.birational} and 
\eqref{eq.birational.1}.

\begin{theorem}\label{prop.birational}
Given any integers
$n, m, s$ and $N$ such that  $m  \geq 3$ 
and $n \geq 2$ and \eqref{eq.birational} holds,
then there 
exist 
smooth varieties $X'$ of dimension $m$ and 
$s$-subcanonical birational morphisms $\varphi':X' \longrightarrow \mathbb P^N$  which are 
deformations of  morphisms $\varphi$, where
$\varphi, n, m$ and $N$ are as in Set-up~\ref{subc} and $\varphi$ satisfies the hypothesis of Theorem~\ref{birational}. In particular, there are $X'$ and $\varphi'$ as above for $m, s$ and $N$ satisfying
\eqref{eq.birational.1}.
\end{theorem}
\color{black}

\noindent\textit{Proof.}
Assume $m\geq 3$, $n\geq 2$ and \eqref{eq.birational} holds.
\color{black}
For any integer $k$ such that 
\begin{equation*}
2 \leq     k \leq \frac{2(m + \lfloor{n/2}\textcolor{black}{\rfloor})+s+1-N}{(\lfloor{n/2}\textcolor{black}{\rfloor}+1)(n-1)},
\end{equation*}
(e.g., $k=2$), 
\color{black}
 there are integers 
$\beta_1,\beta_2,\cdots,\beta_{N-m-\floor{n/2}}\geq 2$, 
and $\alpha_1,\dots\alpha_{\floor{n/2}}\geq \textcolor{black}{k}(n-1)$ 
satisfying the following equation;
$$\sum\beta_i+\sum\alpha_i+\textcolor{black}{k}(n-1) 
=N+s+1.$$ For any such choices of $\alpha_i$'s, 
and $\beta_j$'s, 
let $Y$ be a complete intersection in $\mathbb{P}^N$ of multidegree $$\underline{d}_{\textrm{unord}}=(\alpha_1,\cdots\cdots\alpha_{\floor{n/2}},
\beta_1,\cdots,\beta_{N-m-\floor{n/2}}).$$
Let $\pi:X\to Y$ be a simple cyclic cover branched 
along a smooth member of $|\mathcal{O}_Y(\textcolor{black}{k}n)|$. 
Then $\varphi$ is an $s$--subcanonical cover of degree $n$ satisfying the hypothesis of Theorem ~\ref{birational}. Thus, a general deformation of $\varphi$ is {birational} onto its image.\QEDB 

\begin{example}\label{enr} The following table describes \textcolor{black}{the invariants of} the  first few varieties \textcolor{black}{$X'$ and birational morphisms $\varphi'$ of Theorem~\ref{prop.birational}, for} $k =2$ \textcolor{black}{and $n=3, 4$}.  \textcolor{black}{Proposition ~\ref{nci} and Corollary ~\ref{kpr} imply that,} for light blue  rows $\varphi'$ is not an embedding 
while, 
for the white rows, 
\textcolor{black}{$\varphi'$ would not be an embedding {if Hartshorne's conjecture is true.}} 

\vspace{5pt}

\begin{center}
 \begin{tabular}{c|c|c|c|c|c|c|c} 
 \hline
 $m$ & $n$ & $k$ & $N$ & $s$ & $\underline{d}$ & \color{black}$K_{X'}^m$ & \color{black}$p_g(X')$\\ 
 \hline\hline
  \rowcolor{LightCyan}
 $17$ & $4$ & $2$ & $19$ & $-1$ & $(6,7)$ & $-168$ & $0$\\
 \hline
 $17$ & $4$ & $2$ & $20$ & $-1$ & $(2,6,6)$ & $-288$ & $0$\\
 \hline
 \rowcolor{LightCyan}
 $16$ & $4$ & $2$ & $18$ & $0$ & $(6,7)$ & $0$ & $1$\\
 \hline
 $16$ & $4$ & $2$ & $19$ & $0$ & $(2,6,6)$ & $0$ & $1$\\
 \hline
 \rowcolor{LightCyan}
 $15$ & $4$ & $2$ & $17$ & $1$ & $(6,7)$ & $168$ & $18$\\
 \hline
 $15$ & $4$ & $2$ & $18$ & $1$ & $(2,6,6)$ & $288$ & $19$\\
 \hline
 \end{tabular}
 \end{center}
\end{example}

\begin{remark} 
Arguing as in Remark~\ref{remark.moduli} we conclude that:
\begin{enumerate}
    \item Theorem~\ref{prop.birational} implies the existence, for fixed $m, N, n$ with $m \geq 3, n \geq 2, N \geq m+\floor{n/2}$,  of infinitely many different moduli spaces  having reduced and irreducible components with a locally closed locus that parametrizes smooth varieties with an $s$-subcanonical morphism, finite and of degree $n$ onto its image, whereas the general points of the components correspond to smooth varieties with an $s$-subcanonical morphism which is a finite birational morphism onto its image. In Section~\ref{section.moduli} we will describe more specifically this phenomenon in the case of the canonical map, i.e., if $s=1$ (see Corollary~\ref{def of can morphisms birational}).
    
\item For $m, n$ fixed with $m \geq 3, n \geq 2$ or, simply, for $m$ fixed with $m \geq 3$, there exist smooth varieties of dimension $m$ as in Theorem~\ref{prop.birational} with $p_g$ arbitrarily large.
\end{enumerate}
\end{remark}

\subsection{\texorpdfstring{Degree $n$ subcanonical morphisms whose
deformations are of degree $n$}{Lg}} 
 \textcolor{black}{We now} study the cases for 
which Proposition ~\ref{nonexistence} is applicable \textcolor{black}{to simple cyclic covers $\pi$}. In these cases 
 the degree of any 
 deformation 
of $\varphi$ in these cases remains {unchanged}.

\smallskip

\begin{proposition}\label{prop.degree.unchanged}
In the situation of Set-up ~\ref{subc}, assume $\varphi$ is $s$--subcanonical and the hypothesis of Proposition ~\ref{nonexistence} is satisfied. Then, 
\begin{itemize}
    \item[(1)] 
    \begin{equation}\label{eq.degree.unchanged}
   \mathrm{max}\bigg\{\frac{s+1+m(k-1)-k(n-1)}{k-2},m+1\bigg\}\leq N\leq 2m+s+1-k(n-1)     
    \end{equation}
    and 
    \begin{equation*}
      \textcolor{black}{3 \leq k \leq \frac{m+s}{n-1}.}
    \end{equation*}

\item[(2)] If $k=3$, then $N=2m+s+1-3(n-1)$ and $\underline{d}=(2,\cdots\cdots,2)$.
\end{itemize}
\end{proposition}
\noindent\textit{Proof.} 
Since $\delta+k(n-1)=N+s+1$ and $\delta\geq 2(N-m)$, 
the upper bound follows.\par 
Since $\underline{d}=(d_1,\cdots,d_r)$ and 
$d_i\leq k-1$, it follows that $\delta\leq (N-m)(k-1)$. 
Consequently, $$N+s+1\leq (N-m)(k-1)+k(n-1).$$ 
An easy computation completes the proof of (1), 
and (2) is a consequence of (1).\QEDB\par
\vspace{5pt}

\color{black}
\begin{remark}\label{rmk.degree.unchanged}
Let 
\begin{equation*}
    \kappa=\floor{\frac{m+s}{n-1}}.
\end{equation*}
\textcolor{black}{The function 
on $k$, $k \geq 3$,
\begin{equation*}
    \frac{k(m-n+1)+s-m+1}{k-2}
\end{equation*}
is strictly decreasing.
}
Then, with the hypothesis of Proposition~\ref{prop.degree.unchanged}, 
\textcolor{black}{\eqref{eq.degree.unchanged} implies}
\begin{equation}\label{eq.degree.unchanged.2}
   \mathrm{max}\bigg\{\frac{\kappa(m-n+1)+s-m+1}{\kappa-2},m+1\bigg\}\leq N \leq 2m +s-3n+4; 
\end{equation}
in particular
\begin{equation*}
    s \geq 3n-m-3.
\end{equation*}
\end{remark}

\color{black}

We show now that, for invariants satisfying \eqref{eq.degree.unchanged}
 and \eqref{eq.degree.unchanged.2},
 there are 
morphisms $\varphi$ satisfying the hypothesis of 
Proposition~\ref{nonexistence}. Thus, the degree of  these morphisms $\varphi$  remains constant under deformation. 

\begin{proposition}\label{prop.degree.unchanged.2}
Let $k,m,n, s, \textcolor{black}{N}$ be integers satisfying $k\geq 3$, $m\geq 3$ 
and $n\geq 2$ such that \eqref{eq.degree.unchanged} holds \textcolor{black}{or let 
$m,n, s, \textcolor{black}{N}$ be integers satisfying  $m\geq 3$ 
and $n\geq 2$ such that \eqref{eq.degree.unchanged.2} holds.}
Then there are $s$-subcanonical morphisms  $\varphi$, where
$\varphi, n, m$ and $N$ are as in Set-up~\ref{subc}, all whose deformations have degree $n$ onto their image. 
\end{proposition}

\color{black}

\begin{proof}
\textcolor{black}{Let $m, n, s , N$ integers satisfying $m\geq 3$ 
and $n\geq 2$ such that \eqref{eq.degree.unchanged.2} holds. Then we can choose an integer $k$, with $3 \leq k \leq \kappa$, such that $k,m,n, s, N$ satisfy \eqref{eq.degree.unchanged}.}
Then, there 
are integers $\beta_r\geq\cdots\geq\beta_1\geq 2$ 
such that $\beta_i\leq k-1$ and 
$\sum\beta_i+k(n-1)=N+s+1$. Let $Y$ be of 
multidegree $\underline{d}=(\beta_1,\cdots,\beta_r)$ 
inside $\mathbb{P}^N$. Let $\pi:X\to Y$ be a 
simple cyclic $n$ cover branched along a smooth 
divisor in $|\mathcal{O}_Y(nk)|$. Then $\varphi$ 
is $s$--subcanonical and  satisfies the hypothesis 
of Proposition ~\ref{nonexistence}, 
consequently any deformation of 
$\varphi$ has degree $n$.
\end{proof}

\section{
Deformations of finite morphisms: the case of $\mathbb{Z}_{n/2}\times\mathbb{Z}_2$}\label{secz}

In this section, we shift the focus to \textcolor{black}{the study of deformations of morphisms which factor through} iterations of simple cyclic covers. More precisely, we will deform $\mathbb{Z}_{n/2}\times\mathbb{Z}_2$ covers
to produce smooth subvarieties embedded by complete linear series. Even though we will only  explicitly exhibit the invariants of subvarieties of small codimension, using Theorems~\ref{birational} and \ref{theorem.embedding} as we did in Sections~\ref{4}, \ref{section.cyclic.covers.nci} and \ref{section.cyclic.covers.non.embeddings}, one can easily produce
\begin{enumerate}
\item for any $m, N$ such that $3 \leq m \leq N-1$, 
$m$-dimensional smooth subvarieties in $\mathbb P^N$ of infinitely many different degrees (see Theorem~\ref{prop.embedding.bicyclic}); 
    \item smooth subvarieties 
    in $\mathbb P^N$ of the dimension $m$ in the range $3 \leq m < N/2$
    \textcolor{black}{which are}  not complete intersections;
    \item smooth varieties \textcolor{black}{equipped} with birational subcanonical morphisms which are not embeddings; 
    \item smooth varieties \textcolor{black}{equipped} with birational subcanonical morphisms, which \textcolor{black}{would not be} complete intersections {if} they \textcolor{black}{were} embeddings. 
\end{enumerate} 
\color{black} Furthermore, applying  Theorem ~\ref{2:1} we will show the existence of   $\mathbb{Z}_{n/2}\times\mathbb{Z}_2$ covers whose degree  under deformation drops to half. We remark that we
have not applied Theorem~\ref{2:1} in the previous sections devoted to simple cyclic covers. Although in this section we only deal with 
$\mathbb{Z}_{n/2}\times\mathbb{Z}_2$ covers, our arguments show how to proceed for more general iterations of simple cyclic covers \textcolor{black}{and also for simple cyclic covers of even degree}.

\vspace{5pt}

We describe now our set-up for this section.

\begin{set-up}\label{znz2}
Let \textcolor{black}{$X, Y, m, n$ and $\pi$} be as in Set-up ~\ref{setup1} (1). 
Let $p_1:X_1\to Y$ be a double cover branched along 
a smooth divisor $D_2$ in $|\mathcal{O}_Y(\textcolor{black}{2}l)|$ 
for some $l\in\mathbb{Z}_{>0}$. \color{black} Let $n$ be an even integer, $n \geq 4$, and 
\color{black}
let $p_2:X_2\to Y$ 
be a simple cyclic cover of degree \textcolor{black}{$n/2$,} branched 
along a smooth divisor $D_1$ in 
$|\mathcal{O}_Y\textcolor{black}{(\frac{n}{2}k)}|$ for some 
$k\in\mathbb{Z}_{>0}$. 
Assume $D_1$ and $D_2$ intersect transversally. 
 \textcolor{black}{Assume} $X:=X_1\times_Y X_2$ and  $D_1$ and $D_2$ intersect transversally  \textcolor{black}{(therefore, $X$ is smooth)} and 
\textcolor{black}{assume} $\pi:X\to Y$ 
\textcolor{black}{is}  the natural morphism from the fiber product to $Y$.
\end{set-up}

\begin{remark}\label{remark.subcanonical.ZnZ2}
In the situation of Set-up ~\ref{znz2},  $K_X=\pi^*\mathcal{O}_Y(-N-1+\delta+l+k\textcolor{black}{(\frac{n}{2}-1)})$. Consequently, $(X,L)$ is $s$--subcanonical if and only if $\delta+l+\textcolor{black}{(\frac{n}{2}-1)}k=N+s+1$. \textcolor{black}{From Remark~\ref{remark.subcanonical}, we get} the following facts;
\begin{itemize}
    \item[$(1)$] $X$ is a Fano variety if and only if $\delta+l+\textcolor{black}{(\frac{n}{2}-1)}k\leq N$ (in this case, $Y$ is also Fano). $(X,L)$ is a Fano polarized variety of index $-s$ if and only if $N+1+s=\delta+l+\textcolor{black}{(\frac{n}{2}-1)}k$.
    \item[$(2)$] $X$ is a Calabi--Yau variety if and only if $N+1=\delta+l+\textcolor{black}{(\frac{n}{2}-1)}k$ (in this case, $Y$ is Fano).
    \item[$(3)$] $X$ is a variety of general type if and only if $\delta+l+\textcolor{black}{(\frac{n}{2}-1)}k\geq N+2$. The morphism $\varphi$ (respectively $(X,L)$)  is:
    \begin{itemize}
        \item[(a)] Canonical if and only if $\delta+l+(n-1)k=N+2$ and $k,l\geq 2$ (resp. $\delta+l+\textcolor{black}{(\frac{n}{2}-1)}k=N+2$); in this case $Y$ is Fano.
        \item[(b)] Subcanonical if and only if $\delta+l+\textcolor{black}{(\frac{n}{2}-1)}k\geq N+3$ and $k,l\geq 2$ (resp. $\delta+l+\textcolor{black}{(\frac{n}{2}-1)}k\geq N+3$).
    \end{itemize}
\end{itemize}
\end{remark}

\subsection{Subvarieties with small codimension}\label{embznz2} We study the cases in which Theorem ~\ref{theorem.embedding}  (a) applies. The following \textcolor{black}{proposition, whose proof we omit,} is the analogue of \textcolor{black}{Proposition~\ref{lemmaemb}.} 

\begin{proposition}\label{lemmaemb2}
In the situation of Set-up ~\ref{znz2}, \textcolor{black}{if} 
$\varphi$ is $s$--subcanonical and the hypothesis of Theorem ~\ref{theorem.embedding} (a) holds, then,
\begin{equation}\label{t1}
    m+n-1\leq N\leq 2(m+n-1)-n(\textcolor{black}{n/2}+1)+s+1 
    \textcolor{black}{=2m-n(n/2-1)+s-1}
\end{equation}
and, \textcolor{black}{therefore}, 
\begin{equation*}
 s\geq \textcolor{black}{\frac{n^2}{2}}-m.   
\end{equation*}
Further, if $s=\textcolor{black}{{n^2}/{2}}-m$, then $N=m+n-1$ and the 
\emph{unordered} multidegree of $Y$ is 
$$\underline{d}_{\textrm{unord}}=
(2,4,\cdots,\textcolor{black}{n-2},2,4,\cdots,n).$$
\end{proposition}

\begin{theorem}\label{prop.embedding.bicyclic}
Given any integers
$n, m, s$ and $N$ such that  $m  \geq 3$, 
$n$ even, and $n \geq 4$,  if  \eqref{t1} holds,
then there 
exist 
smooth varieties $X'$ of dimension $m$ and 
$s$-subcanonical embeddings $\varphi':X' \longrightarrow \mathbb P^N$  such that 
\begin{enumerate}
    \item[(a)] the morphisms $\varphi'$ are 
    deformations of  morphisms $\varphi$, where 
$\varphi, m$ and $N$ are as in Set-up~\ref{subc} and $\varphi$ satisfies the hypothesis of Theorem~\ref{theorem.embedding} (a); 
\item[(b)] the subvarieties $\varphi'(X')$ are one-parameter deformations, as described in Theorem~\ref{theorem.embedding}, of  multiplicity $n$ rope subschemes.
\end{enumerate}
For any given integers
$n, m, N$ and $s$ satisfying   $m  \geq 3$, $n$ even,
$n \geq 4$ and \eqref{t1}, there are infinitely many non--isomorphic subvarieties $\varphi'(X')$ as above. 

\end{theorem}
\color{black}

\begin{proof}
 
\color{black}
Let $m,n, s \in\mathbb{Z}$, with $m\geq 3$, $n$ even, $n\geq 4$ satisfying 
\eqref{t1}.
For any integer $k,l\geq 2$ such that 
\begin{equation*}
     k(n^2/4-1)+l(n/2)\leq 2(m+n)+s-N-1,
\end{equation*}
then there are integers $\beta_1,\beta_2,\cdots,
\beta_{N-m-n+1}\geq 2$ satisfying the equation
$$\sum\beta_i+k\left(\frac{n^2}{4}-1\right)+\frac{n}{2}l=N+s+1.$$ For any such choices 
of $\beta_i$'s, let $Y$ be a complete intersection 
in $\mathbb{P}^N$ of multidegree 
$$\underline{d}_{\textrm{unord}}=(k,2k,\cdots\cdots,(n/2-1)k,\textcolor{black}{l},l+k,l+2k,\cdots,l+k(n/2-1),\beta_1,\cdots,\beta_{N-m-n+1}).$$
Let $\pi:X\to Y$. 
Let $\varphi:X\to Y$ be the natural morphism from the fiber product $X:=X_1\times_Y X_2$ of a double cover $p_1:X_1\to Y$ branched along a smooth member $D_2$ of $|\mathcal{O}_Y(2l)|$, and a simple cyclic cover $p_2:X_2\to Y$ of degree $n/2$, branched along a smooth member $D_1$ of $|\mathcal{O}_Y((nk/2)|$ such that $D_1$ and $D_2$ intersect transversally. Then $X$ is smooth and $\varphi$ is an $s$--subcanonical cover of degree $n$ satisfying the hypothesis of Theorem ~\ref{theorem.embedding}  (a). Thus, a general deformation of $\varphi$ is an {embedding}.
\end{proof}

\color{black}
\begin{corollary}\label{cor.embedding.bicyclic}
Given any integers $m$ and $N$ such that $3 \leq m \leq N-3$, 
there 
exist 
smooth varieties $X'$ of dimension $m$ and embeddings $\varphi':X' \longrightarrow \mathbb P^N$, with  $\varphi'(X')$  having infinitely many different degrees, such that 
\begin{enumerate}
    \item[(a)] the morphisms $\varphi'$ are 
    deformations of  morphisms $\varphi$, where 
$\varphi, m$ and $N$ are as in Set-up~\ref{znz2}, $\varphi$ is a $\mathbb{Z}_{n/2}\times \mathbb{Z}_2$ cover with even $n\geq 4$ and it satisfies the hypothesis of Theorem~\ref{theorem.embedding} (a); 
\item[(b)] the subvarieties $\varphi'(X')$ are one-parameter deformations, as described in Theorem~\ref{theorem.embedding}, of  rope subschemes.
\end{enumerate}
 More precisely, in the above cases we have $s\geq N-2m+5$ and the above rope subschemes can be chosen to be non--complete intersections.
 
\end{corollary}

\subsection{Varieties with degree \textcolor{black}{\texorpdfstring{$n/2$}{Lg}} subcanonical morphisms}\label{subsection.half} Finally, we study the cases for which Theorem ~\ref{2:1} applies. This is a new case that did not appear in the previous section. 

\medskip

\begin{proposition}\label{prop.bicyclic.2.1-Javier}
In the situation of Set-up ~\ref{znz2},
let 
\begin{equation*}
    \kappa= \left \lfloor \frac{2(m+s-1)}{n-2} \right \rfloor \ \textrm{and} \ 
    \kappa'= \left \lfloor \frac{2(m+s-2)}{n-2} \right \rfloor,
\end{equation*}
let 
 $\varphi$ be $s$--subcanonical \textcolor{black}{and assume hypothesis (1) 
and (2) of Theorem ~\ref{2:1} are satisfied.} 
 \smallskip
 
 \begin{enumerate}
     \item \textcolor{black}{If $l=1$},
then
 \begin{equation}\label{eq.prop.bicyclic.2.1.Javier.first}
  \mathrm{max}\bigg\{m+1, \frac{(m-n/2+1)\kappa +s-m}{\kappa-2}\bigg\}\leq N\leq 2m -3n/2+s+3; 
\end{equation}
in particular, 
\begin{equation}\label{eq.prop.bicyclic.2.1.Javier.second}
    s \geq 3n/2 - m -2.
    \end{equation}
    
\item  If  hypothesis (1') of Theorem ~\ref{2:1} is satisfied, then 
\begin{equation}\label{eq.prop.bicyclic.2.1.Javier.third}
\mathrm{max}\bigg\{m+1,\frac{(m-n/2+2)\kappa' +s-m+1}{\kappa'-2}\bigg\} \leq N\leq 2m -5n/2+s+4; 
\end{equation}
in particular, 
\begin{equation}\label{eq.prop.bicyclic.2.1.Javier.fourth}
    s \geq 5n/2 - m -3.
    \end{equation}
    
\item  If  $l \geq 2$, then 
    \begin{equation}\label{eq.prop.bicyclic.2.1.Javier.third.bis}
\textcolor{black}{\mathrm{max}\bigg\{m+1,\frac{(m-n/2+1/2)\kappa' +s-m+3/2}{\kappa'-2}\bigg\}}\leq N\leq 2m -5n/2+s+4;
  \end{equation}
  in particular, 
\begin{equation}\label{eq.prop.bicyclic.2.1.Javier.fourthbis}
    s \geq 5n/2 - m - \textcolor{black}{3}.
    \end{equation}
 \end{enumerate}
 \end{proposition}

\begin{proof}
\textcolor{black}{We prove (1) first, so we} assume $l=1$. 
By Remark~\ref{remark.subcanonical.ZnZ2} and Theorem~\ref{2:1} (2),
\begin{equation}\label{eq.prop.bicyclic.2.1.Javier.sixth}
    \textrm{max}\bigg\{m+1, \frac{(m-n/2+1)k+s-m}{k-2}\bigg\} \leq N \leq 2m+s -(n/2-1)k.  
\end{equation}
The function 
\begin{equation*}
   \frac{(m-n/2+1)k+s-m}{k-2} 
\end{equation*}
on $k$, $k \geq 3$, is strictly decreasing.
Since
\eqref{eq.prop.bicyclic.2.1.Javier.sixth} implies
\begin{equation*}
    k \leq \frac{2(m+s-1)}{n-2},
\end{equation*}
the minimum value of 
\begin{equation*}
   \frac{(m-n/2+1)k+s-m}{k-2} 
\end{equation*}
is attained at $k=\kappa$. This proves 
\eqref{eq.prop.bicyclic.2.1.Javier.first} and \eqref{eq.prop.bicyclic.2.1.Javier.first} implies \eqref{eq.prop.bicyclic.2.1.Javier.second}.

 \smallskip
\textcolor{black}{Now we prove (2).} \textcolor{black}{Recall that} hypothesis (1') of Theorem ~\ref{2:1} is satisfied \textcolor{black}{and, in particular, $l \geq 2$}.
 By Remark~\ref{remark.subcanonical.ZnZ2},
 \begin{equation}\label{eq.prop.bicyclic.2.1.Javier.sixbis}
 \textrm{max}\bigg\{m+1, \frac{(m-n/2+2)k-2l+s-m}{k-2}\bigg\} \leq N \leq
 \textcolor{black}{2m+s-(n/2-1)k-2l +3},  
\end{equation}
and, by hypothesis (2) of Theorem ~\ref{2:1}, 
 \begin{equation}\label{eq.prop.bicyclic.2.1.Javier.seventh}
 \textrm{max}\bigg\{m+1, \frac{(m-n/2+1)k+s-m+1}{k-2}\bigg\} \leq N \leq 2m+s -(n/2-1)k\textcolor{black}{-1}.  
\end{equation}
The function 
\begin{equation*}
   \frac{(m-n/2+1)k+s-m+1}{k-2} 
\end{equation*}
on $k$, $k \geq 3$, is strictly decreasing. 
Since
\eqref{eq.prop.bicyclic.2.1.Javier.seventh} implies
\begin{equation*}
    k \leq \frac{2(m+s-2)}{n-2},
\end{equation*}
the minimum value of 
\begin{equation*}
   \frac{(m-n/2+1)k+s-m+1}{k-2} 
\end{equation*}
is attained at $k=\kappa'$.
This proves 
\eqref{eq.prop.bicyclic.2.1.Javier.third} and \eqref{eq.prop.bicyclic.2.1.Javier.third} implies \eqref{eq.prop.bicyclic.2.1.Javier.fourth}.

\smallskip
 
\textcolor{black}{Finally we prove (3), so we} now assume $l \geq 2$. Then \begin{equation*}
\textrm{max}\bigg\{m+1,\frac{(m-n/2+1)k-l+s-m+1}{k-2}\bigg\} 
\leq N \leq 2m + s -(n/2-1)k-2l + 3, 
\end{equation*}
\color{black}
and, by hypothesis (2) of Theorem ~\ref{2:1}, 
 \begin{equation*}
 \textrm{max}\bigg\{m+1, \frac{(m-n/2+1/2)k+s-m+3/2}{k-2}\bigg\} \leq N \leq 2m+s -(n/2-1)k-1.  
\end{equation*}
The function
\begin{equation*}
\frac{(m-n/2+1/2)k+s-m+3/2}{k-2}    
\end{equation*}
on $k$, $k \geq 3$, is strictly decreasing and, arguing as in the proof of (2), we see that its minimum is attained at $k=\kappa'$. Then 
\eqref{eq.prop.bicyclic.2.1.Javier.third.bis} holds and \eqref{eq.prop.bicyclic.2.1.Javier.third.bis} implies  \eqref{eq.prop.bicyclic.2.1.Javier.fourthbis}. 
\end{proof}

\begin{remark}
\begin{enumerate}
    \item If 
\begin{equation}\label{eq.remark.1}
  \kappa \geq \frac{2(m+s+2)}{n}
\end{equation}
(this happens for example if $s \geq n(n/4+1)-m-2$), then 
\begin{equation*}
  \frac{(m-n/2+1)\kappa +s-m}{\kappa-2} \leq m+1, 
\end{equation*}
so \eqref{eq.prop.bicyclic.2.1.Javier.first} becomes 
\begin{equation}\label{eq.remark.2}
m+1 \leq N\leq 2m -3n/2+s+3 
\end{equation}
in this case.
\item If 
\begin{equation}\label{eq.remark.3}
  \kappa' \geq \frac{2(m+s+3)}{n}
\end{equation}
(this happens for example if $s \geq n(n/4+2)-m-3/2$),
then 
\begin{equation*}
  \frac{(m-n/2+1)\kappa' +s-m+1}{\kappa'-2} \leq m+1, 
\end{equation*}
so \eqref{eq.prop.bicyclic.2.1.Javier.third} becomes 
\begin{equation}\label{eq.remark.4}
m+1 \leq N\leq 2m -5n/2+s+4
\end{equation}
in this case.
\end{enumerate}
\end{remark}

\begin{theorem}\label{new.degreehalf}
Let $m, n, s, N$ be integers, with $m \geq 3$ and $n \geq 4$, even.
\begin{enumerate}
    \item If \eqref{eq.prop.bicyclic.2.1.Javier.first} holds, or if
\eqref{eq.remark.1} and \eqref{eq.remark.2}
hold, then there 
exist 
smooth varieties $X$ of dimension $m$ and 
$s$-subcanonical morphisms $\varphi:X \longrightarrow \mathbb P^N$  such that a general deformation of $\varphi$ is a  morphism which is finite and of degree $n/2$ onto its image. 
\item If \eqref{eq.prop.bicyclic.2.1.Javier.third} holds,  or if
\eqref{eq.remark.3} and \eqref{eq.remark.4} hold, then there 
exist 
smooth varieties $X$ of dimension $m$ and 
$s$-subcanonical morphisms $\varphi:X \longrightarrow \mathbb P^N$  such that a general deformation of $\varphi$ is a flat morphism which is finite and of degree $n/2$ onto its image, which is smooth. 
\end{enumerate}
\end{theorem}

\begin{proof}
We first set $l=1$.
For any integers $k, N$ that satisfy
$k \geq 3$ and \eqref{eq.prop.bicyclic.2.1.Javier.sixth}, 
there are integers 
\begin{equation*}
   2 \leq \beta_1, \beta_2, \dots, \beta_{N-m} \leq k-1 
\end{equation*} 
such that 
\begin{equation*}
\sum\beta_i+1+k(n/2-1)=N+s+1.    
\end{equation*}
Then, for any such choices of $\beta_i$'s, let $Y$ be a smooth complete 
intersection of multidegree
$$\underline{d}_{\textrm{unord}}=(\beta_1,\cdots\cdots,\beta_{N-m}).$$ Let $\varphi:X\to Y$ be the morphism from the fiber product $X:=X_1\times_Y X_2$ of a double cover $p_1:X_1\to Y$ branched along a smooth member $D_2$ of $|\mathcal{O}_Y(2)|$, and a simple cyclic cover $p_2:X_2\to Y$ of degree {$n/2$}, branched along a smooth member $D_1$ of 
{$|\mathcal{O}_Y(nk/2)|$}, such that $D_1$ and $D_2$ intersect transversally. Then $X$ is smooth, $\varphi$ is an $s$--subcanonical cover of degree $n/2$ satisfying the hypothesis (1) and (2) of Theorem ~\ref{2:1}. Thus, a general deformation of $\varphi$ \textcolor{black}{is a morphism which is finite and} of degree $n/2$ \textcolor{black}{onto its image}. 
Therefore, if  $k, N$ are integers that satisfy
$k \geq 3$ and \eqref{eq.prop.bicyclic.2.1.Javier.sixth}, then there 
exist 
smooth varieties $X$ of dimension $m$ and 
$s$-subcanonical morphisms $\varphi:X \longrightarrow \mathbb P^N$  such that a general deformation of $\varphi$ is a morphism, which is finite and of degree $n/2$ onto its image.
This implies (1).

\smallskip

Now let us prove (2). For any integers $k, l, N$ that 
satisfy $l \geq 2$, $k \geq 2l+1$ and \eqref{eq.prop.bicyclic.2.1.Javier.sixbis}, 
there exist integers $2 \leq \beta_1, \beta_2, \dots, \beta_{N-m-1} \leq k-1$ such that 
\begin{equation*}
\sum\beta_i+2l+k(n/2-1)=N+s+1.    
\end{equation*}
Then, for any such choices of $\beta_i$'s, let $Y$ be a smooth complete 
intersection of multidegree
$$\underline{d}_{\textrm{unord}}=(l,\beta_1,\cdots\cdots,
\beta_{N-m-1}).$$
Let now $\varphi: X \longrightarrow Y$ be a morphism constructed as in the previous paragraph, except that this time $D_2$ is a smooth member of 
$|\mathcal O_Y(2l)|$. Then $X$ is smooth, $\varphi$ is an $s$--subcanonical cover of degree $n/2$ satisfying the hypothesis (1), (2) and (1') of Theorem ~\ref{2:1}. Thus, a general deformation of $\varphi$ is a flat morphism, which is finite and degree $n/2$ onto a smooth image. 
Therefore, if  $k, l, N$ are integers that 
satisfy $l \geq 2$, $k \geq 2l+1$ and \eqref{eq.prop.bicyclic.2.1.Javier.sixbis}, then there 
exist 
smooth varieties $X$ of dimension $m$ and 
$s$-subcanonical morphisms $\varphi:X \longrightarrow \mathbb P^N$  such that a general deformation of $\varphi$ is a flat morphism, which is finite and degree $n/2$ onto a smooth image.
This implies (2). 
\end{proof}

\color{black}
\begin{example}
\color{black}
The following table describes the  first few varieties \textcolor{black}{of codimension $2$ and} with $l=2$ and $k=5$ we obtain this way.
\vspace{5pt}

\begin{center}
 \begin{tabular}{c|c|c|c|c|c|c|c|c} 
 \hline
 $m$ & $n$ & $k$ & $l$ & $N$ & $s$ & $\underline{d}$ & $K_X^m$ & $p_g(X)$\\ 
 \hline\hline
 $9$ & $4$ & $5$ & $2$ & $11$ & $-1$ & $(2,2)$ & $-16$ & $0$\\
 \hline
 $14$ & $6$ & $5$ & $2$ & $16$ & $-1$ & $(2,2)$ & $-24$ & $0$\\
 \hline
 $8$ & $4$ & $5$ & $2$ & $10$ & $0$ & $(2,2)$ & $0$ & $1$\\
 \hline
 $13$ & $6$ & $5$ & $2$ & $15$ & $0$ & $(2,2)$ & $0$ & $1$\\
 \hline
 $7$ & $4$ & $5$ & $2$ & $9$ & $1$ & $(2,2)$ & $16$ & $10$\\
 \hline
 $12$ & $6$ & $5$ & $2$ & $14$ & $1$ & $(2,2)$ & $24$ & $15$\\
 \hline
 \end{tabular}
 \end{center}
\end{example}

\begin{remark}\label{remark.moduli3} 
Arguing as in Remark~\ref{remark.moduli} we conclude that:
\begin{enumerate}
    \item Theorem~\ref{new.degreehalf} implies, for fixed $m, N, n$ with $m \geq 3,  N \geq m+1, n \geq 4$,  $n$ even, the existence of infinitely many different moduli spaces  having reduced and irreducible components with a locally closed locus that parametrizes smooth varieties with an $s$-subcanonical morphism, finite and of degree $n$ onto a smooth image, whereas the general points of the components correspond to smooth varieties with an $s$-subcanonical morphism which is a morphism, which is finite and of degree $n/2$ onto its image, that can even be smooth. In Section~\ref{section.moduli} we will describe more specifically this phenomenon in the case of the canonical map, i.e., if $s=1$ (see Corollary~\ref{def of can morphisms halved}).
    
\item For $m, n$ fixed with $m \geq 3, n \geq 4$, $n$ even or, simply, for $m$ fixed with $m \geq 3$, there exist smooth varieties of dimension $m$ as in Theorem~\ref{new.degreehalf} with $p_g$ arbitrarily large.
\end{enumerate}
\end{remark}

\color{black}
\section{Deformations of finite morphisms: dihedral cover case}\label{5}

\color{black}

In this section, we  \textcolor{black}{look at} {\it non--abelian covers},  \textcolor{black}{specifically, at} simple dihedral covers.  With the aid of the results of Section ~\ref{3}, we will 
deform  \textcolor{black}{simple dihedral covers} to  \textcolor{black}{construct} small codimensional subvarieties  \textcolor{black}{embedded by complete linear series} inside projective space.  \textcolor{black}{As in the case of $\mathbb{Z}_{n/2}\times\mathbb{Z}_2$ covers,}  we will only \textcolor{black}{explicitly exhibit} small codimensional subvarieties.  However, using the results of Section ~\ref{3}, one can  produce
\color{black} $m$-dimensional smooth subvarieties in $\mathbb P^N$ of infinitely many different degrees for any $m, N$ such that $3 \leq m \leq N-1$;  smooth, non--complete intersection, $m$-dimensional subvarieties in $\mathbb P^N$, in the range $3 \leq m < N/2$; 
smooth varieties \textcolor{black}{equipped} with birational subcanonical morphisms which are not embeddings;
smooth varieties \textcolor{black}{equipped} with birational subcanonical morphisms, which \textcolor{black}{would not be} complete intersections {if} they  \textcolor{black}{were} embeddings; 
\textcolor{black}{and simple dihedral covers  whose degree under
deformation drops to half (by applying  Theorem~\ref{2:1} to the factorization that the index 2, cyclic subgroup of  $D_{n/2}$ induces on  a $D_{n/2}$  cover).}

\color{black}

\smallskip

Now we describe our set-up for \textcolor{black}{the} simple dihedral covers that we will deform.

\begin{set-up}\label{dn}
Let \textcolor{black}{$X, Y, \pi, n$ and $m$} be as in Set-up ~\ref{setup1} (1). Let \textcolor{black}{$n$ be even, $n \geq 6$ and let} $\pi:X\to Y$ be a smooth irreducible simple \textcolor{black}{$D_{n/2}$} cover associated to \textcolor{black}{sections $s_1 \in H^0(\mathscr{O}_Y(\frac{n}{2}k))$ and $s_2 \in H^0(\mathscr{O}_Y(2k))$ such that $s_1^2-s_2^{{n}/{2}}$ is smooth in the zero locus of $s_2 \neq 0$ and $s_1$ and $s_2$ intersect transversally  \textcolor{black}{(all this happens for instance for general choices of $s_1$ and $s_2$)}. }
\end{set-up}

\begin{remark}\label{remark.dihedral.subcanonical}
In the situation of Set-up ~\ref{dn}, $K_X=\pi^*\mathcal{O}_Y(\textcolor{black}{-N-1+\delta +\frac{n}{2}k})$. Consequently, $(X,L)$ is $s$--subcanonical if and only if $\delta+\textcolor{black}{\frac{n}{2}}k=N+s+1$. We also make a note of the following facts;
\begin{itemize}
    \item[$(1)$] \textcolor{black}{The variety} $X$ is a Fano variety if and only if $\delta+\textcolor{black}{\frac{n}{2}}k\leq N$ (in this case, $Y$ is also Fano). \textcolor{black}{If $\delta+{\frac{n}{2}}k\leq N$, then} $(X,L)$ is a Fano polarized variety of index $-s$ if and only if $N+1+s=\delta+\textcolor{black}{\frac{n}{2}}k$.
    \item[$(2)$] \textcolor{black}{The variety} $X$ is a Calabi--Yau variety if and only if $N+1=\delta+\textcolor{black}{\frac{n}{2}}k$ (in this case, $Y$ is Fano).
    \item[$(3)$] \textcolor{black}{The variety}  $X$ is a variety of general type if and only if $\delta+\textcolor{black}{\frac{n}{2}}k\geq N+2$. \textcolor{black}{Moreover} The morphism $\varphi$ (respectively $(X,L)$)  is
    canonical if and only if $\delta+\textcolor{black}{\frac{n}{2}}k=N+2$ and $k\geq 2$ (resp. $\delta+\textcolor{black}{\frac{n}{2}}k=N+2$); in this case $Y$ is Fano.
\end{itemize}
\end{remark}

We  
{{study}} the cases in which Theorem ~\ref{theorem.embedding}  (a) applies, and as before, we omit the proof of the following

\begin{proposition}\label{embdn}
\color{black}
In the situation of Set-up ~\ref{dn}, 
assume $\varphi$ is $s$--subcanonical. If the hypothesis of Theorem ~\ref{theorem.embedding}  (a) holds, then,
\begin{equation}\label{eq.embdn}
 m+n-1\leq N\leq
 \textcolor{black}{2(m+n-1) -n(n/2+1)+s +1}
\textcolor{black}{= 2m-n(n/2-1) + s-1}, 
\end{equation}
\textcolor{black}{so, in particular,} 
\begin{equation*}
 s\geq n^2/2-m.  
    \end{equation*}
Further, if $s=n^2/2-m$, then $N=m+n-1$ and the 
\emph{unordered} multidegree of $Y$ is 
$$\underline{d}_{\textrm{unord}}=
(2,2,4,4,\cdots,2(n/2-1),2(n/2-1),n).$$
\end{proposition}
\color{black}

\textcolor{black}{In the next theorem we show the existence of smooth subvarieties $\varphi'(X')$ obtained by deforming 
$s$-subcanonical morphisms $\varphi$ for which the inequalities of Proposition ~\ref{embdn} \textcolor{black}{hold}.}

\begin{theorem}\label{prop.embedding.dihedral}
Given any integers
$n, m, s$ and $N$ such that  $m\geq 3$, $n\geq 6$ even, and 
\eqref{eq.embdn} holds,
there 
exist 
smooth varieties $X'$ of dimension $m$ and 
$s$-subcanonical embeddings $\varphi':X' \longrightarrow \mathbb P^N$  such that 
\begin{enumerate}
    \item[(a)] the morphisms $\varphi'$ are 
    deformations of  morphisms $\varphi$, where 
$\varphi, m$ and $N$ are as in Set-up~\ref{dn} and $\varphi$ satisfies the hypothesis of Theorem~\ref{theorem.embedding} (a); 
\item[(b)] the subvarieties $\varphi'(X')$ are one-parameter deformations, as described in Theorem~\ref{theorem.embedding}, of  multiplicity $n$ rope subschemes.
\end{enumerate}
For any given integers
$n, m, N$ and $s$ satisfying   $m  \geq 3$, $n\geq 6$ even, 
satisfying \eqref{eq.embdn}, there are infinitely many non--isomorphic subvarieties $\varphi'(X')$ as above. 
\end{theorem}
\noindent\textit{Proof.} Under the assumption, there are integers $\beta_1,\beta_2,\cdots,\beta_{N-m-n+1}\geq 2$ satisfying the following equation;
$$\sum\beta_i+n^2/2=N+s+1.$$ For any such choices of $\beta_i$'s, let $Y$ be a complete intersection in $\mathbb{P}^N$ of multidegree $$\underline{d}_{\textrm{unord}}=(2,2,4,4,\cdots\cdots,2(n/2-1),2(n/2-1),n,\beta_1,\cdots\cdots,\beta_{N-m-n+1}).$$
Let $\varphi:X\to Y$ be a smooth simple dihedral cover associated to the line bundle $L=\mathcal{O}_Y(2)$ satisfying the conditions of Theorem ~\ref{cp}. Then $\varphi$ is an $s$--subcanonical cover of degree $n$ satisfying the hypothesis of Theorem ~\ref{theorem.embedding} (a). Thus, a general deformation of $\varphi$ is an 
{embedding}.\QEDB\par

\begin{example}\label{example.dihedral.1}
Now we describe \textcolor{black}{the invariants of} the first few 
\textcolor{black}{smooth, subvarieties $\varphi'(X')$ obtained when we deform $s$--subcanonical morphisms $\varphi$} as in Set-up ~\ref{dn}, \textcolor{black}{with $-1 \leq s \leq 1$,} for which the hypothesis of Theorem ~\ref{theorem.embedding} (a) holds \textcolor{black}{(this includes Calabi-Yau and canonically embedded subvarieties)}.  \textcolor{black}{It is interesting to note that} \textcolor{black}{the  subvarieties in rows 1, 2, 4, 5, 8 are \textcolor{black}{near the boundary of, but inside, the range of Hartshorne's conjecture. Precisely, in row 7, the codimension $r$ satisfies $r=(1/3)N-2$ and in rows 1, 2, 4, 5, 8 the codimension $r$ satisfy $(1/3)N-2 < N \leq (1/3)N-3$.}}

\vspace{5pt}

\begin{center}
 \begin{tabular}{c|c|c|c|c|c|c|c} 
 \hline
 $m$ & $n$ & $k$ & $N$ & $s$ & $\underline{d}$ & $K_{\textcolor{black}{X'}}^m$ & $p_g(\textcolor{black}{X'})$\\ 
 \hline\hline
 $20$ & $6$ & $2$ & $26$ & $-1$ & $(2,2,2,4,4,6)$ & $-4608$ & $0$\\
 \hline
 $19$ & $6$ & $2$ & 
 $24$ & $-1$ & $(2,2,4,4,6)$ & $-2304$ & $0$\\
 \hline
 $33$ & $8$ & $2$ & $40$ & $-1$ & $(2,2,4,4,6,6,8)$ & $-147456$ & $0$\\
 \hline
 $19$ & $6$ & $2$ & $25$ & $0$ & $(2,2,2,4,4,6)$ & $0$ & $1$\\
 \hline
 $18$ & $6$ & $2$ & $23$ & $0$ & $(2,2,4,4,6)$ & $0$ & $1$\\
 \hline
 $32$ & $8$ & $2$ & $39$ & $0$ & $(2,2,4,4,6,6,8)$ & $0$ & $1$\\
 \hline
 $18$ & $6$ & $2$ & $24$ & $1$ & $(2,2,2,4,4,6)$ & $4608$ & $25$\\
 \hline
 $17$ & $6$ & $2$ & $22$ & $1$ & $(2,2,4,4,6)$ & $2304$ & $23$\\
 \hline
 $31$ & $8$ & $2$ & $38$ & $1$ & $(2,2,4,4,6,6,8)$ & $147456$ & $39$\\
 \hline
 \end{tabular}
\end{center}
\end{example}
  
  \begin{example}\label{example.dihedral.2}
Theorem~\ref{prop.embedding.dihedral} allows also the construction of other smooth subvarieties closer to the boundary of the range of Hartshorne's conjecture, like the very small sample of the smooth subvarieties   in the range $r=(1/3)N-1$, displayed in Table \hyperref[t02]{2} of the introduction, which are  obtained by deforming $\varphi$, which factors through a  simple $D_{n/2}$ cover  of degree $n$, with $k=2$. It is therefore interesting to know whether these subvarieties and the ones of Example~\ref{example.dihedral.1} is a complete intersection, although we do not know whether the same is true for  some special deformations of $\varphi$ (see Question~\ref{question.ci.diherdral}).
 \end{example}

\begin{remark}\label{remark.moduli4} 
Arguing as in Remark~\ref{remark.moduli} we conclude that:
\begin{enumerate}
    \item Theorem~\ref{prop.embedding.dihedral} implies the existence, for fixed $m, N, n$ with $m \geq 3, n \geq 6$, $n$ even, and $N \geq m+n-1$,  of infinitely many different moduli spaces  having reduced and irreducible components with a locally closed locus that parametrizes smooth varieties with an $s$-subcanonical morphism, finite and of degree $n$ onto its image, whereas the general points of the components correspond to smooth varieties with an $s$-subcanonical morphism which is an embedding onto its image. In Section~\ref{section.moduli} we will describe more specifically this phenomenon in the case of the canonical map, i.e., if $s=1$ (see Corollary~\ref{def of can morphisms dn}).
    
\item For $m, n$ fixed with $m \geq 3, n \geq 6$, $n$ even or, simply, for $m$ fixed with $m \geq 3$, there exist smooth varieties of dimension $m$ as in Theorem~\ref{prop.birational} with $p_g$ arbitrarily large.
\end{enumerate}
\end{remark} 

\section{Moduli of varieties of general type}
\label{section.moduli}

\textcolor{black}{In this section we construct components of the moduli space of varieties of general type that are analogues of moduli space of curves with respect to the canonical map and its deformations. \textcolor{black}{More precisely, we show} the existence of a \textcolor{black}{locally closed locus} in \textcolor{black}{these} moduli components,  \textcolor{black}{which is analogous to the hyperelliptic locus of curves of genus bigger than $2$,} where the degree of the canonical map jumps up. \textcolor{black}{The deformation of the canonical maps} gives raise to one--parameter families where the special member is a rope of multiplicity $m\geq 2$ and the general member is a smooth canonically embedded subvariety. \textcolor{black}{An} interesting \textcolor{black}{point} to note is that \textcolor{black}{we find these canonically embedded varieties for any codimension $r$ and, in particular, for any $r$ in the range of the Hartshorne’s conjecture.}}
\textcolor{black}{The existence of the special loci \textcolor{black}{proved in this section is} but a particular case of \textcolor{black}{a} more general phenomenon. In fact, our results imply (see Remarks~\ref{remark.moduli}, \ref{remark.moduli2}, \ref{remark.moduli3}, \ref{remark.moduli4}) the existence of infinitely many irreducible components possessing loci which \textcolor{black}{correspond} to \textcolor{black}{the jumping} up of the degree of the subcanonical maps. \textcolor{black}{As it happens in this section,} the  deformations \textcolor{black}{of the canonical map in these cases give} rise to \textcolor{black}{analogous}, interesting one--parameter families. Because of the significance of the canonical map, \textcolor{black}{now} we focus and give the details only of the loci related to this map.}
\textcolor{black}{Finally, as explained in the introduction, in this section we also construct two distinct kinds of moduli components  which differ from the moduli space of curves.} \textcolor{black}{These are made precise in Corollaries \ref{def of can morphisms birational} and \ref{def of can morphisms halved} below.}

\begin{corollary}\label{def of can morphisms}
\color{black}
Let $m \geq 3$.  For each integer $n$ such that $2 \leq n \leq \sqrt{m+3}$, there is 
at least one reduced 
and irreducible component $\mathcal M_n$ of the moduli space of varieties of general type of dimension $m$ such that:
\begin{enumerate}
    \item[(i)] $\mathcal M_n$ has a locally closed locus that parametrizes varieties whose canonical map $\varphi$ is a morphism, finite of degree $n$ onto a smooth  image; more precisely, $\varphi$ factors through simple cyclic cover of degree $n$. 
    \item[(ii)] The general points of $\mathcal M_n$ 
    correspond to varieties whose canonical map is an embedding. 
    \item[(iii)] \textcolor{black}{The canonical models of the varieties corresponding to general points of $\mathscr{M}_n$ degenerate to a rope of multiplicity $n$, along a general one-parameter family that intersects the special locus \textcolor{black}{described in (i)}.}
\end{enumerate}  
For different values of $n$, the components $\mathcal M_n$ are different.
Letting $m$ vary, the general points of the $\mathcal M_n$ parametrize canonically embedded varieties of any codimension. 
\end{corollary}

\noindent\textit{Proof.}
\color{black}
Let $m \geq 3$. Let 
 $2 \leq n \leq \sqrt{m+3}$. There exists an integer $N$, 
 satisfying \eqref{eq.lemmaemb} for $s=1$. Then Theorem ~\ref{theorem.embedding.a} implies the existence of $\mathcal M_n$ satisfying (i) and (ii). Note that $N$ is the geometric genus of the varieties parametrized by $\mathcal M_n$ and that, for each $n$ and $s=1$, one can choose a different $N$ satisfying \eqref{eq.lemmaemb}. Note also that \eqref{eq.lemmaemb} can be rephrase in terms of $r$, so the last statement follows. \textcolor{black}{Finally $\mathcal M_n$ is reduced since the simple cyclic cover constructed in Theorem ~\ref{theorem.embedding.a} is unobstructed by Lemma ~\ref{unobs}.
 } \QEDB\par

\smallskip

\textcolor{black}{Arguing as in the proof of Corollary~\ref{def of can morphisms},}  Theorem ~\ref{prop.birational} \textcolor{black}{yields} the following: 

\begin{corollary}\label{def of can morphisms birational}
\color{black}
Let  $m \geq 3$.  For each integer $n$, $n \geq 2$ such that 
\begin{equation*}
 2(n-1)(\floor{n/2}+1)-\floor{n/2} \leq m+2,   
\end{equation*}
 there is 
at least one reduced 
 and irreducible component $\mathcal M_n$ of the moduli space of varieties of general type of dimension $m$ such that:
\begin{enumerate}
    \item[(i)] $\mathcal M_n$ has a locally closed locus that parametrizes varieties whose canonical map $\varphi$ is a morphism, finite of degree $n$ onto a smooth image; more precisely, $\varphi$ factors through simple cyclic cover of degree $n$. 
    \item[(ii)] The general points of $\mathcal M_n$ 
    correspond to varieties whose canonical map is a finite birational morphism onto its image. 
\end{enumerate}  
For different values of $n$, the components $\mathcal M_n$ are different.
\end{corollary}

\textcolor{black}{\textcolor{black}{Theorem ~\ref{new.degreehalf} yields} the following:}

\color{black}
\begin{corollary}\label{def of can morphisms halved}
Let \textcolor{black}{$m \geq 6$}.
 For each even integer $n$ such that 
\begin{enumerate}
\item 
\begin{equation*}
 4 \leq n \leq (2m+8)/5; 
\end{equation*}
\item and there exists an integer $\nu$ satisfying
\begin{equation*}
\textcolor{black}{2\frac{m+4}{n}  \leq \nu \leq 2\frac{m-1}{n-2},}
\end{equation*}
\end{enumerate}
there is 
at least one reduced, irreducible, uniruled component $\mathcal M_n$ of the moduli space of varieties of general type of dimension $m$ such that:
\begin{enumerate}
    \item[(i)] $\mathcal M_n$ has a locally closed locus that parametrizes varieties whose canonical map $\varphi$ is a morphism, finite of degree $n$ onto  a smooth image; more precisely, $\varphi$ factors through a $\mathbb{Z}_{n/2}\times\mathbb{Z}_2$ Galois cover. 
    \item[(ii)] The general points of $\mathcal M_n$ 
    correspond to varieties whose canonical map is a flat morphism, finite and of degree $n/2$ onto a smooth image. 
\end{enumerate}  
For different values of $n$, the components $\mathcal M_n$ are different.
\end{corollary}

\begin{proof}
\color{black} (1) and (2) of the statement imply, when 
$s=1$, formula \eqref{eq.remark.3}. If $s=1$, \eqref{eq.remark.4}
becomes 
\begin{equation}\label{eq.cor11.3}
m+1 \leq N\leq 2m -5n/2+5,
\end{equation} 
and (1) implies the existence of integers $N$ satisfying
\eqref{eq.cor11.3}. 
\color{black}
Then, 
arguing as in the proof of Corollary~\ref{def of can morphisms},
Theorem~\ref{new.degreehalf} implies the result. 
\textcolor{black}{By Theorem ~\ref{2:1} the deformation space of $X$ is smooth and uniruled. Since $H^0(T_X) = 0$, $X$ has finite automorphism group and hence the corresponding moduli component is also uniruled.} 
\end{proof}

\color{black}
\begin{remark}
{\rm The inequalities 
\begin{equation*}
    4 \leq n \leq 2(\sqrt{m+8}-2)
\end{equation*}
imply (1) and (2) of the statement of Corollary~\ref{def of can morphisms halved}.}
\end{remark}

\textcolor{black}{Arguing as in the proof of Corollary~\ref{def of can morphisms}, Theorem~\ref{prop.embedding.dihedral} yields the following:}

\color{black}
\begin{corollary}\label{def of can morphisms dn}
Let $m \geq 18$.  For each even integer $n$ such that $6 \leq n \leq \sqrt{2m}$, there is 
at least one reduced 
and irreducible component $\mathcal M_n$ of the moduli space of varieties of general type of dimension $m$ such that:
\begin{enumerate}
    \item[(i)] $\mathcal M_n$ has a locally closed locus that parametrizes varieties whose canonical map $\varphi$ is a morphism, finite of degree $n$ onto a smooth image; more precisely, $\varphi$ factors through simple dihedral cover of degree $n$. 
    \item[(ii)] The general points of $\mathcal M_n$ 
    correspond to varieties whose canonical map is an embedding. 
    \item[(iii)] \textcolor{black}{The canonical models of the varieties corresponding to general points of $\mathscr{M}_n$ degenerate to a rope of multiplicity $n$, along a general one-parameter family that intersects the special locus \textcolor{black}{described in (i)}.}

\end{enumerate}  
For different values of $n$, the components $\mathcal M_n$ are different.
Letting $m$ vary, the general points of the $\mathcal M_n$ parametrize canonically embedded varieties of any codimension $r \geq 5$.
\end{corollary}

\color{black}
\begin{remark}
As hinted in the statements of Corollaries~\ref{def of can morphisms},  \ref{def of can morphisms birational}, \ref{def of can morphisms halved} and \ref{def of can morphisms dn}, once we fix $m$ and $n$ in any of these corollaries, there could be more than one reduced and irreducible component $\mathcal M_n$, even if we also fix $p_g$. This is because we can make different choices for the multidegree of  $Y$ and  different choices for the ramification of $\pi$. For example, regarding 
Corollary~\ref{def of can morphisms}, for $m=6$, we can exhibit two components
$\mathcal M_2$ and $\mathcal M'_2$, the former constructed from a double cover $\pi$ of a $(2,4)$--complete intersection $Y$, branched along a smooth member of $|\mathcal O_Y(4)|$, and the latter constructed from a double cover $\pi'$ of a $(2,6)$--complete intersection $Y'$, branched along a smooth member of $|\mathcal O_{Y'}(2)|$. In both cases, $p_g=9$; however, $K^6=16$ in the former case and $K^6=24$ in the latter case.
\end{remark}

\section{\textcolor{black}{Open questions and final remarks}} \label{6}
 
We end this article by asking \textcolor{black}{some} questions \textcolor{black}{regarding the} subvarieties we have constructed. 
 
  \begin{question}\label{question.ci}
 Recall that the subvarieties obtained in these article when applying Theorem~\ref{theorem.embedding}  to cyclic covers (and, more generally, to suitable iterated cyclic covers) are one-parameter smoothing of ropes. 
{Proposition ~\ref{prop.ci} implies 
that general members of some of these smoothings are complete intersections, for example, when the hypotheses of Theorem~\ref{theorem.embedding.a} or Theorem~\ref{prop.embedding.bicyclic} are satisfied.
However,
Proposition ~\ref{prop.ci}} \textit{does not} 
imply that general members of every one–parameter smoothing of 
\textcolor{black}{those} ropes are complete intersections. Then, in the above 
cases, is every one–parameter smoothing
  a complete intersection or are there some  one-parameter deformations yielding non--complete intersections subvarieties? This question is quite intriguing, since Theorem~\ref{theorem.embedding.a} and Theorem~\ref{prop.embedding.bicyclic} produce {$m$-dimensional, smooth} subvarieties
  \textcolor{black}{of $\mathbb P^N$ of any codimension and, in particular,  in the range $m \geq N/2 +1$, where constructing non--complete intersection is quite difficult, and, more remarkably, \textcolor{black}{fall} also in the range of Hartshorne's conjecture.} 
 \end{question}

\begin{question}\label{question.ci.diherdral}
Are some of the smooth subvarieties $\varphi'(X')$ constructed in Theorem~\ref{prop.embedding.dihedral} by deforming dihedral covers, non--complete intersection subvarieties? Recall that these subvarieties are $s$-subcanonical (see \eqref{pushdn} and Proposition~\ref{prop.subcan.subvar}). Although their degree and $s$ are such that there exist smooth complete intersection, $s$-subcanonical subvarieties with the same degree in the same projective space, this doesn't necessarily implies that the subvarieties $\varphi'(X')$ are complete intersections. Since \textcolor{black}{some of them} have small codimension, in the same range mentioned in Question~\ref{question.ci} \textcolor{black}{(see a few cases of this in Examples~\ref{example.dihedral.1} and \ref{example.dihedral.2})}, the question is relevant. \color{black}
  \textcolor{black}{We note that the} methods \textcolor{black}{that work in the case of iterated cyclic covers} to prove that \textcolor{black}{a} general deformation is a complete intersection, do not work for dihedral covers.
\end{question}

 \begin{question} \textcolor{black}{Is the morphism $\varphi'$ of Example~\ref{enr} an embedding for the varieties appearing in the white rows of the table?} More generally, using our methods, one can construct infinitely many smooth varieties with birational canonical morphisms  which \textcolor{black}{would be} non--complete intersections {if} they 
 \textcolor{black}{were} embeddings. The question is to determine if there are any special one-parameter family for these covers along which they deform to embeddings. \textcolor{black}{Again, the question is relevant, for the images of many of these embeddings would be small codimension subvarieties.}
 \end{question}

 \color{black}
 
 \begin{question}
 It is well known that one can construct $m$-dimensional, smooth subvarieties in $\mathbb P^N$, in the range  $m < \frac{N}{2} +1$ as degeneracy loci of vector bundle homomorphisms. 
 The method we use in this paper to construct smooth subvarieties is completely different. Therefore, are 
 the smooth, non--complete intersection  subvarieties constructed by our methods degeneracy loci of vector bundle homomorphisms? Particularly, are the smooth, non--complete intersection subvarieties constructed in in Theorems~\ref{real.main0.5} and \ref{main0.5} degeneracy loci of vector bundle homomorphisms? 
 \end{question}
 
Next remark shows the subtleties of the situation we handle, as it 
 introduces an example of an unobstructed morphism $\varphi$ to 
 which we can associate an embedded rope which is obstructed. In some of the cases, 
even if the deformation space of $\varphi$ is smooth, the
rope lies in at least two components of the Hilbert scheme: 
 
\begin{remark}\label{f3}
With the notation of Set-up~\ref{setup1}, 
we give an example of the following situation:
\begin{itemize}
    \item[(1)] a morphism $\varphi: X \to \mathbb{P}^N$ with $Y$ smooth 
   such that there exists a smooth curve $T$ 
   with a distinguished point $0$ and a flat family of morphisms $\Phi: \mathscr{X} \to \mathbb{P}^N_T$ over $T$ such that $\Phi_0 = \varphi$ and $\Phi_t$ is an embedding for all $t \neq 0$; 
   \item[(2)] $(\textrm{Im}\Phi)_0$ is a rope $\widetilde{Y}$ 
   in
   $\mathbb{P}^N$ supported on $Y$ with conormal bundle $\mathscr{E}$;
    \item[(3)] the morphism $\varphi$ is unobstructed   
    and, by (1), a general element of the algebraic formally 
    semiuniversal deformation of $\varphi$ is an embedding;
    \item[(4)] However, the rope $\widetilde{Y}$ does not 
    correspond to a smooth point of its Hilbert scheme.  
\end{itemize} 
\smallskip

\noindent For such an example, consider the rational normal 
scroll $Y = \mathbb{F}_e \hookrightarrow \mathbb{P}^N$ with $e = 3$ or $e = 4$. By \cite{R76}, there exists a 
double $K3$ cover $\pi: X \to Y$, branched along a reasonably singular curve $C \in |-2K_Y|$ (see \cite[Theorem 2.2]{R76}). Then $H^1(\mathcal{N}_{\pi}) = H^1(\pi_*\mathcal{N}_{\pi}) = H^1(-2K_Y|_C) = 
H^1(2K_C)=0$ by Proposition ~\ref{pushnpi}. Since $H^1(\mathcal{N}_{Y/\mathbb{P}^N}) = H^1(\mathcal{N}_{Y/\mathbb{P}^N} \otimes K_Y) = 0$, we have $H^1(\mathcal{N}_{\varphi}) = 0$ and $\varphi$ is unobstructed. But $\widetilde{Y}$ is a singular point of its Hilbert scheme by  
\cite[Theorem 4.1]{GP97}. Moreover, 
if $N \geq 10$ and $N$ is congruent with $1$ modulo $4$, then the Hilbert point 
of $\widetilde{Y}$ lies in two irreducible components 
of the Hilbert scheme (see \cite[Theorem 4.3]{GP97}).

\end{remark}

\bibliographystyle{plain}

\end{document}